\def\nc{\newcommand}
\newcounter{licznik}[section]
\theoremstyle{plain}
\newtheorem{thm}[licznik]{THEOREM}
\newtheorem{lemma}[licznik]{LEMMA}
\theoremstyle{definition}
\newtheorem{definition}[licznik]{DEFINITION}
\theoremstyle{remark}
\newenvironment{remark}
  {\pushQED{\qed}\remarkx}
 {\popQED\endremarkx}
\newenvironment{example}
  {\pushQED{\qed}\examplex}
  {\popQED\endexamplex}
\newcommand\assumptionlabel[1]{\hspace\labelsep
                               \normalfont\bfseries #1 \gdef\@currentlabel{#1}}
\newenvironment{assumption}
               {\list{}{\labelwidth\z@ \itemindent-\leftmargin
                        }}
               {\endlist}
\nc\com[1]{\textcolor{red}{#1}}
\nc\mop[1]{\hspace{#1pt}}
\nc{\laa}{\mathcal A}
\nc{\gie}{\mathcal G}
\nc{\lo}{\mathcal O}
\nc\ha{\mathcal{H}}
\nc\gk{\mathcal G}
\nc\ga{\mathcal A}
\nc{\prob}{\mathbb{P}}
\nc\pe{\mathcal{P}}
\nc{\lde}{\mathcal{D}}
\nc\eh{\mathcal{H}}
\nc{\ef}{\mathcal{F}}
\nc{\er}{\mathbb{R}}
\nc{\erp}{\mathbb{R}_+}
\nc{\en}{{\mathbb{N}}}
\nc{\es}{\mathcal{S}}
\nc{\te}{\mathcal{T}}
\nc{\borel}{\mathcal{B}}
\nc\bse{\mathcal{E}}
\nc\ve{\varepsilon}
\nc{\tl}{\tilde}
\nc{\ee}{{\mathbb{E}\,}}
\nc{\ind}[1]{1_{\{#1\}}\,}
\nc{\supp}{\mathop{\rm supp} \nolimits}
\nc{\conv}{\mathop{\rm conv} \nolimits}
\nc{\essinf}{\mathop{\rm ess\, inf} \limits}
\nc{\esssup}{\mathop{\rm ess\, sup} \limits}
\begin{document}


\title{Infinite horizon stopping problems with (nearly) total reward criteria\footnote{Part of the research of the first author was performed during a visit to the Hausdorff Research Institute for Mathematics at the University of Bonn in the framework of the Trimester Program Stochastic Dynamics in Economics and Finance. Research of both authors was supported in part by MNiSzW Grant no. UMO-2012/07/B/ST1/03298}}
\date{$\,$Date: 2014-01-27 15:20:45 $\,$}

\author{
Jan Palczewski\footnote{\normalfont\rm School of Mathematics, University of Leeds, Leeds LS2 9JT, UK (e-mail: J.Palczewski@leeds.ac.uk)}
\and
\L ukasz Stettner\footnote{Institute of Mathematics, Polish Academy of Sciences, Sniadeckich
8, 00-956 Warszawa, Poland and  Vistula University, Warszawa, Poland (e-mail: \mbox{stettner@impan.gov.pl}).}
}

\maketitle

\begin{abstract}
We study an infinite horizon optimal stopping problem which arises naturally in the optimal timing of a firm/project sale or in the valuation of natural resources: the functional to be maximised is a sum of a discounted running reward and a discounted final reward. The running and final rewards as well as the instantaneous interest rate (used for calculating discount factors) depend on a Feller-Markov process modelling the underlying randomness in the world, such as prices of stocks, prices of natural resources (gas, oil, etc.), or factors influencing the interest rate. For years, it had seemed sensible to assume that interest rates were uniformly separated from 0, which is needed for the existing theory to work. However, recent developments in Japan and in Europe showed that interest rates can get arbitrarily close to 0. In this paper we establish the feasibility of the stopping problem, prove the existence of optimal stopping times and a variational characterisation (in the viscosity sense) of the value function when interest rates are \textit{not} uniformly separated from 0. Our results rely on certain ergodic properties of the underlying (non-uniformly) ergodic Markov process. We provide several criteria for diffusions and jump-diffusions.
\end{abstract}

\section{Introduction}
The theory of optimal stopping has recently seen its renaissance due to applications in finance and operations research (e.g., pricing of American options, optimal timing of a sale or valuation of natural resources, see \cite{Pham2009} and references therein, or pricing of swing options with applications in energy trading \cite{Carmona2008}), and statistics (e.g., sequential hypothesis testing, see \cite{Peskir2006, shiryaev1978} and references therein). In some applications, such as the valuation of American options, there is a natural bound for stopping times -- the maturity date of the option. This results in finite horizon stopping problems. In others, like finding an optimal time to sell a stock/business or valuing natural resources, the horizon is infinite leading to an optimal stopping problem of the following form:
\begin{equation}\label{eqn:general}
w(x) = \sup_{\tau} \limsup_{T \to \infty} \ee^x \Big\{ \int_0^{\tau \wedge T} e^{-\int_0^s r(X_u) du} f(X_s) ds
 + e^{-\int_0^{\tau \wedge T}r(X_u) du} g(X_{\tau \wedge T}) \Big\}.
\end{equation}
Here, the stochastic process $X(t)$ models the underlying randomness in the world, such as the price of a stock, the price of a natural resource (gas, oil, etc.), or factors influencing the interest rate. The running profit/cost is represented by $f$ while the proceeds from the final sale at time $\tau$ or the closure of the production facility are given by $g$. The function $r$ corresponds to an instantaneous interest rate that is used for discounting of future cash-flows. For years, it had seemed sensible to assume that interest rates were uniformly separated from $0$. However, recent developments in Japan and in Europe showed that (even nominal) interest rates for government bonds can get arbitrarily close to $0$.

The uniform separation of the discount rate from $0$ (i.e., $\inf_x r(x) > 0$) is in line with a large strand of literature in infinite-horizon optimal stopping \cite{bensoussan1978, Peskir2006, Pham2009,  Robin1978}; commonly, the discount rate is a positive constant.  This ensures, under appropriate assumptions on the growth of $f$ and $g$, that the value function is finite and allows to approximate the infinite horizon stopping problem with finite horizon ones. An abolition of the discounting or a relaxation of its separation from zero brings in a lot of difficulties. In particular, the integrability in the functional is at risk; in Section \ref{sec:no_disc} we show that this is indeed the case. A solution to this problem is to impose restrictive assumptions on $f$ and $g$. In martingale approaches it is common to assume that $\ee^x \{ \int_0^\infty e^{-\int_0^s r(X_u) du} f^-(X_s) ds \} < \infty$ for every $x$ and that the family $\{ e^{-\int_0^{\tau}r(X_u) du} g^-(X_\tau):\ \text{$\tau$-a stopping time}\}$, is uniformly integrable, see \cite{Oksendal2005, Peskir2006} and references therein. Alternatively, one can take $f$ to be non-positive (then the integral term penalises for waiting only) \cite{Peskir2006, shiryaev1978}.


A general stopping problem without the restrictions on $f$ was studied in \cite{Robin1981,  Stettner1986, Stettner1989a} for uniformly geometrically ergodic Feller-Markov processes. Such processes converge exponentially fast to their invariant measure and the speed of this convergence is independent of the starting point. Examples are usually constrained to processes on compact state spaces. An attempt to generalise these results was made in \cite{Stettner1989}, where the author assumed that trajectories of the process can be split into excursions with square integrable lengths between two compact sets. In this setting optimality was studied within a narrow class of stopping rules.

The aim of this paper is to analyse the optimal stopping problem \eqref{eqn:general} for \textit{non-uniformly ergodic} Feller-Markov processes with minimal assumptions on $f$ and $g$, and a general discount rate $r$ which is only assumed to be non-negative. Specifically, assume that the state of the world is described by a right-continuous time homogeneous Markov process $\big(X(t)\big)$ defined on a locally compact separable space $E$ endowed with a metric $\rho$ with respect to which every closed ball is compact. The Borel $\sigma$-algebra on $E$ is denoted by $\bse$. Let $P_t$ be the semigroup generated by the process $(X_t)$, i.e., $P_t \phi(x) = \ee^x \{ \phi(X_t) \}$ for any bounded measurable function $\phi:E \to \er$. The transition measure is given by $P_t(x, A) = \prob^x \{ X_t \in A \}$ for $A \in \bse$.

We make the following standing assumptions:
\begin{assumption}
\item[(A1)] \label{ass:weak_feller}
(Weak Feller property)
$$
P_t\, \mathcal{C}_0 \subseteq \mathcal{C}_0,
$$
where $\mathcal C_0$ is the space of continuous bounded functions $E \to \er$ vanishing in infinity.
\item[(A2)] \label{ass:ergodic} $(X_t)$ is ergodic, i.e., there is a unique probability measure $\mu$ on $\bse$ such that for any $x \in E$
\[
\lim_{t \to \infty} \|P_t(x, \cdot) - \mu(\cdot) \|_{TV} = 0,
\]
where $\| \cdot \|_{TV}$ denotes the total variation norm.
\item[(A3)] Functions $f, g: E \to \er$ are continuous and bounded. \label{ass:continuity}
\end{assumption}
Assumption \ref{ass:weak_feller} is commonly imposed and, in fact, necessary, to establish the existence of optimal stopping times for general weak Feller processes (for the necessity see the example at the end of Section 3.1 in \cite{Palczewski2008}). The class of weakly Feller processes \ref{ass:weak_feller} comprises Levy processes \cite[Theorem 3.1.9]{Applebaum2004}, solutions to stochastic differential equations with continuous bounded coefficients driven by Levy processes (for details see \cite[Theorem 6.7.2]{Applebaum2004}) and many diffusions and jump-diffusions with unbounded coefficients encountered in modelling. Notice that a weak Feller process can be assumed to be a standard Markov process, see \cite[Theorem 3.14]{Dynkin1965}.

The ergodicity assumption \ref{ass:ergodic} will be used to remove the requirement for the discount rate to be bounded from below by a strictly positive constant. It provides information about the behaviour of the integral part of the functional \eqref{eqn:general}. Indeed, when the function $f$ is not restricted to either positive or negative values, the integral of $f$ with respect to the invariant measure $\mu$ determines the solution of the problem. For an undiscounted problem, i.e., $r \equiv 0$, it is optimal never to stop whenever $\mu(f) > 0$. On the other hand, the case of $\mu(f) < 0$ exhibits a much more interesting behaviour which is studied in detail in this paper.

The continuity in assumption \ref{ass:continuity} can be relaxed when the process is strongly Feller, for example, for locally non-degenerate diffusions or jump-diffusions.

The most important of our contributions towards the solution of the stopping problem \eqref{eqn:general} are distilled in the following theorem:
\begin{thm}\label{thm:intro_undis}
Assume that $\mu(f) < 0$, function $r$ is non-negative, continuous and bounded, and either
\begin{itemize}
\item[i)] there is $K: E \to (0, \infty)$ bounded on compacts and $h:[0, \infty) \to \er_+$ such that
\[
\| P_t(x, \cdot) - \mu(\cdot) \|_{TV} < K(x) h(t),
\]
and $\int_0^\infty h(t) dt < \infty$,
\item[ii)] $\ee^x \{ K(X_T) \} < \infty$ for any $x \in E$ and $T \ge 0$,
\item[iii)] the process $(X_t)$ is strongly Feller, or for any compact set $L \subset E$ there is $\alpha > 0$ such that $\sup_{x \in L,\ T \ge 0} \ee^x \{ K(X_T)^{1+\alpha} \} < \infty$,
\end{itemize}
or
\begin{itemize}
\item[i')] for any $\delta, \ve>0$ and a compact set $K \subset E$ there is $N>0$ and $p>0$ such that for $n\ge N$ and $x \in K$ we have
\[
\prob^x\Big\{\Big|\frac1{\int_0^{n \delta} e^{-\int_0^s r(X_u) du}ds} \int_0^{n \delta} e^{-\int_0^s r(X_u) du} \big(f(X_s) - \mu(f)\big) ds \Big|> \ve \Big\} \le e^{-p (n \delta)}.
\]
\end{itemize}
Then the stopping time $\tau_\ve = \inf \{ t \ge 0:\ g(X_t) \ge w(X_t) - \ve\}$ is $\ve$-optimal for any $\ve > 0$, the value function $w$ is continuous and satisfies, in a viscosity sense, the variational inequality
\[
\min \big( -\laa w + rw - f, w - g \big) = 0,
\]
where $\laa$ is the infinitesimal generator of $(X_t)$.

If, additionally, $R = \lim_{t \to \infty} \int_0^t r(X_s) ds < \infty$ $\prob^x$-a.s. (for example, $r \equiv 0$)  then the stopping time $\tau^* = \inf \{ t \ge 0:\ g(X_t) \ge w(X_t) \}$ is optimal for $w$.
\end{thm}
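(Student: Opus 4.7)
The approach I would take is to reduce \eqref{eqn:general} to a sequence of finite-horizon stopping problems handled by classical weak-Feller optimal stopping theory, and then pass to the limit using the ergodic hypotheses to control the running reward. The crucial enabling fact is $\mu(f)<0$: despite the absence of a uniform lower bound on $r$, the average of $f$ along the process is negative, so long sojourns are penalised and the value $w$ remains finite. The main obstacle throughout is quantifying this control, which is exactly what hypotheses (i)--(iii) and (i') are designed to supply; once finiteness and adequate tail estimates are in place, continuity, the variational inequality, and the optimality statements follow from comparatively standard techniques.

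\textbf{Step 1: finiteness and uniform tail estimates.} Under (i)--(ii), the bound $\|P_t(x,\cdot)-\mu\|_{TV}\le K(x)h(t)$ with $h\in L^1$ yields $|\ee^x\{f(X_t)\}-\mu(f)|\le 2\|f\|_\infty K(x)h(t)$. Writing $f=\mu(f)+(f-\mu(f))$ shows that the expected running reward is bounded above by $\mu(f)\,\ee^x\{\int_0^{\tau\wedge T}e^{-\int_0^s r(X_u)du}ds\}$ plus a constant uniform in $\tau$ and $T$; since $\mu(f)<0$ and $r\ge 0$, the first summand is non-positive. Under (i') the identical estimate follows directly from the exponential concentration bound. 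This proves $w$ is finite and that the $\limsup$ in \eqref{eqn:general} can be replaced by an ordinary limit (up to vanishing corrections) for any $\tau$.

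\textbf{Step 2: continuity of $w$ and $\ve$-optimality of $\tau_\ve$.} I would introduce the finite-horizon value functions $w_T$ obtained by restricting $\tau\le T$. Each $w_T$ is continuous by classical weak-Feller optimal stopping theory (cf.\ \cite{Palczewski2008}) using (A1) and (A3). The uniform tails from Step 1, combined with (iii) --- either strong Feller continuity or the $L^{1+\alpha}$ moment bound on $K(X_T)$ that ensures uniform integrability --- let one conclude $w_T\to w$ locally uniformly, so $w$ is continuous. Then $w$ is the smallest continuous $r$-excessive majorant of $g$, and a standard Snell-envelope argument (applied to the supermartingale $e^{-\int_0^t r(X_u)du}w(X_t)+\int_0^t e^{-\int_0^s r(X_u)du}f(X_s)\,ds$ optionally stopped at $\tau_\ve$) yields $\ve$-optimality of $\tau_\ve=\inf\{t\ge 0:\,g(X_t)\ge w(X_t)-\ve\}$.

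\textbf{Step 3: viscosity inequality and optimality of $\tau^*$.} The variational inequality follows from the dynamic programming principle applied over small deterministic horizons, together with Dynkin's formula on smooth test functions touching $w$ from above or below, in the standard manner for viscosity solutions of obstacle problems. For the last assertion, when $R<\infty$ a.s.\ the discount factor $e^{-\int_0^t r(X_u)du}$ remains bounded below by $e^{-R}>0$, so the supermartingale $M_t=e^{-\int_0^t r(X_u)du}w(X_t)+\int_0^t e^{-\int_0^s r(X_u)du}f(X_s)\,ds$ --- a martingale on $[0,\tau^*]$ by the DPP --- admits optional sampling at $\tau^*$ (allowing $\tau^*=\infty$); continuity of $w$ forces $w(X_{\tau^*})=g(X_{\tau^*})$ on $\{\tau^*<\infty\}$, and the dominated convergence justified by Step 1 delivers optimality of $\tau^*$.
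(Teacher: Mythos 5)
Your overall architecture (finite-horizon approximation, Snell envelope, Dynkin's formula for the viscosity inequality) is the same as the paper's, but Step~1 contains the decisive gap. The total-variation bound $\|P_t(x,\cdot)-\mu(\cdot)\|_{TV}\le K(x)h(t)$ controls $\ee^x\{f(X_t)\}-\mu(f)$ at \emph{deterministic} times, hence it bounds $\ee^x\{\int_0^T(f(X_s)-\mu(f))\,ds\}$ uniformly in $T$; it does \emph{not} bound $\ee^x\{\int_0^{\tau\wedge T}e^{-\alpha_s}(f(X_s)-\mu(f))\,ds\}$ uniformly over stopping times $\tau$, because $\ee^x\{\ind{s<\tau}(f(X_s)-\mu(f))\}$ is not $P_sf(x)-\mu(f)$. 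Producing that stopping-time-uniform bound --- the paper's assumptions \ref{ass:undis_large_dev} and \ref{ass:large_dev} --- is the heart of the whole paper: one first shows (Lemma \ref{lem:integrability}, using hypothesis (ii)) that $\int_0^t(f(X_s)-\mu(f))\,ds+q(X_t)$ is a martingale, so that $\ee^x\{\int_0^\tau(f(X_s)-\mu(f))\,ds\}=q(x)-\ee^x\{q(X_\tau)\}$, and then must prove the zero-potential $q$ is \emph{bounded from below}. That is false in general; Lemma \ref{lem:f_C0} establishes it only when $\{x:\ f(x)\le\mu(f)\}$ is compact, a condition not assumed in the theorem, and removing it requires the approximation $f_n=\bar f-z_n(\bar f-f)$ of Theorem \ref{thm:relax_B2} together with the observation that \ref{ass:undis_large_dev} is inherited by smaller running rewards. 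None of this appears in your one-line reduction, and without it the finiteness of $w$ and the bound $\ee^x\{\int_0^\sigma e^{-\alpha_s}\,ds\}\le M(x)$ for near-optimal $\sigma$ --- on which every subsequent step rests --- are unproven. (Under (i') your sketch is closer to the paper's Lemma \ref{lem:gen_ldev_bound}, which does go from the concentration bound to the stopping-time estimate, though via a summation over the events where the empirical average deviates.)

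Two further points are passed over too quickly. First, $\tau_\ve$ may equal $+\infty$ with positive probability (the paper gives an explicit example), so ``optional stopping at $\tau_\ve$'' of the Snell supermartingale is not available; the paper instead works with bounded $\tfrac1m$-optimal times $\sigma_m$ and the estimate $\ee^x\{\ind{\sigma_m<\tau_\ve}e^{-\alpha_{\sigma_m}}\}\le\frac{1}{m\ve}$ of Lemma \ref{lem:gen_disc_tech}, which is what kills the contribution of $\{\tau_\ve=\infty\}$ and the term $\limsup_{T\to\infty}\ee^x\{\ind{\tau_\ve=\infty}e^{-\alpha_T}g(X_T)\}$. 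Second, for the last assertion you cannot ``allow $\tau^*=\infty$'': when $R<\infty$ the discount factor tends to $e^{-R}>0$, so the terminal term does not vanish along $T\to\infty$ and one must \emph{prove} $\tau^*<\infty$ $\prob^x$-a.s. The paper does this in Theorem \ref{thm:general_optimality} by using $e^{-R}$ as an unnormalised density, converting the estimate above (via Lemma \ref{lem:bound_expectation}) into $\ee^x\{\tau_\ve e^{-R}\}\le M(x)$ and hence into finiteness of $\tau_0=\lim_{\ve\to0}\tau_\ve\ge\tau^*$.
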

Examples of processes satisfying (i)-(iii) are given in Section \ref{sec:example}. They include a large class of (non-uniformly) exponentially or sub-exponentially ergodic diffusions (e.g., polynomially ergodic), including a prime example of an Ornstein-Uhlenbeck process. Assumption (i') is mostly applicable in the undiscounted case ($r \equiv 0$) to processes on compact state spaces (for example, for reflected diffusions). It then follows from the weak Feller property \ref{ass:weak_feller}.

Theorem \ref{thm:intro_undis} combines selected results from Sections \ref{sec:no_disc}-\ref{sec:VI}. For the undiscounted problem ($r \equiv 0$), conditions (i)-(ii) or (i') imply the main assumption \ref{ass:undis_large_dev} at the beginning of Subsection \ref{subsec:undis_general} through results in Subsections \ref{subsec:undis_potential} and \ref{subsec:large_dev}. Theorem \ref{thm:optimal_time_bounded_q} shows that $\tau^*$ is an optimal stopping time. Continuity of $w$ is proved in Subsection \ref{subsec:continuity}. The case with a general discount rate $r$ is studied in Section \ref{sec:general}. $\ve$-optimality of $\tau_\ve$ is demonstrated in Theorem \ref{thm:general_disc_optimality}. Theorem \ref{thm:gen_continuity} shows that $w$ is continuous. Optimality of $\tau^*$ follows from Theorem \ref{thm:general_optimality}. Variational characterisation of the value function is established in Section \ref{sec:VI}.

The structure of the paper is as follows. Section \ref{sec:no_disc} presents results for the undiscounted functional, $r \equiv 0$. These results are generalised to an arbitrary discount rate in Section \ref{sec:general}. Variational characterisation of the value function is presented in Section \ref{sec:VI}. In Section \ref{sec:dichotomy} we show that when $\mu(r) > 0$ and a certain  upper bound for the large deviations of the empirical process $r(X_t)$ holds, then the stopping problem \eqref{eqn:general} inherits many properties from the problem with a constant positive discount rate, in particular, the value function is finite for an arbitrary function $f$ (not necessarily satisfying $\mu(f) < 0$).

\section{Undiscounted functional}\label{sec:no_disc}
The study of the stopping problem \eqref{eqn:general} commences with a special case when the discount rate $r (x)$ is zero:
\begin{equation}\label{eqn:stopping_problem}
w(x) = \sup_{\tau} \limsup_{T \to \infty} \ee^x \Big\{ \int_0^{\tau \wedge T} f(X_s) ds + g(X_{\tau \wedge T}) \Big\}.
\end{equation}
This particular stopping problem will highlight main differences between a standard discounted functional and the functional with the discount rate that is not uniformly separated from zero. To appreciate the difficulty of the problem notice that by the law of large numbers for ergodic processess (see Lemma \ref{lem:LLN} below) we have
\[
\lim_{t \to \infty} \frac{1}{t} \int_0^t f^+(X_s) ds = \mu(f^+)
\qquad \text{and} \qquad
\lim_{t \to \infty} \frac{1}{t} \int_0^t f^-(X_s) ds = \mu(f^-)
\]
$\prob^x$-a.s. for all $x \in E$. When both the positive part $f^+$ of $f$ and the negative part $f^-$ of $f$ are $\mu$-a.s. non-zero then the integral $\int_0^\infty f(X_s) ds$ is undefined because it is a difference of two infinities. Ergodicity of the process $(X_t)$ introduces a delicate trade-off between these integrals ensuring that one infinity ``cancels'' the other.

\begin{lemma}\label{lem:LLN}
Under assumption \ref{ass:ergodic}, for any measurable function $h:E \to \er$ such that $\mu(|h|) < \infty$, we have for $\mu$ almost all $x \in E$
\[
\frac{1}{t} \int_0^t h(X_s) ds \to \mu(h) \qquad \text{$\prob^x$-a.s. as $t \to \infty$,}
\]
where $\mu(h)$ denotes $\int_E h(x) \mu(dx)$.
\end{lemma}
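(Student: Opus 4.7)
The plan is to reduce the claim to Birkhoff's continuous-time ergodic theorem applied to the stationary version of the process, and then to disintegrate over the initial distribution. First, I would check that $\mu$ is in fact invariant for the semigroup $P_t$. Combining \ref{ass:ergodic} with the semigroup property $P_{t+s} = P_t P_s$, for any bounded measurable $\phi$ and any $s \ge 0$,
\[
\mu(P_s \phi) = \lim_{t \to \infty} P_{t+s}\phi(x) = \mu(\phi),
\]
so $\mu P_s = \mu$. Consequently, under $\prob^\mu := \int_E \prob^x\, \mu(dx)$ the coordinate process $(X_t)$ on path space $\Omega$ is stationary and the time-shift $(\theta_s)$ preserves $\prob^\mu$.

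Next, I would invoke the continuous-time Birkhoff ergodic theorem to obtain, for any measurable $h$ with $\mu(|h|) < \infty$,
\[
\frac{1}{t} \int_0^t h(X_s)\, ds \longrightarrow \ee^\mu \bigl[ h(X_0) \,\bigl|\, \mathcal{I} \bigr] \qquad \prob^\mu\text{-a.s.,}
\]
where $\mathcal{I}$ denotes the $\sigma$-algebra of shift-invariant sets in $\Omega$. The pivotal point is that $\mathcal{I}$ is $\prob^\mu$-trivial. This is a classical consequence of the uniqueness of the invariant measure asserted in \ref{ass:ergodic}: uniqueness is equivalent to $\prob^\mu$ being an extreme point in the set of shift-invariant laws on $\Omega$ whose finite-dimensional marginals come from the Markov transitions $(P_t)$, which in turn is equivalent to ergodicity of the shift under $\prob^\mu$. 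Alternatively, the total variation convergence in \ref{ass:ergodic} yields mixing directly through the Markov property, which implies triviality of the tail $\sigma$-algebra and hence of $\mathcal{I}$. Given this triviality, the conditional expectation in the display above equals the deterministic constant $\mu(h)$.

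Finally, let $G \subset \Omega$ be the event $\{\tfrac{1}{t}\int_0^t h(X_s)\, ds \to \mu(h)\}$. Since $\prob^\mu(G) = 1$ and
\[
1 = \prob^\mu(G) = \int_E \prob^x(G)\, \mu(dx),
\]
we must have $\prob^x(G) = 1$ for $\mu$-almost every $x$, which is the claim. The main delicate step is the identification of the Birkhoff limit as the deterministic constant $\mu(h)$, i.e., translating the pointwise total variation convergence in \ref{ass:ergodic} into ergodicity of the stationary coordinate process; the remaining steps (invariance of $\mu$, the Birkhoff theorem itself, and the disintegration) are either routine or direct invocations of standard results.
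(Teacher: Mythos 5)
Your proposal is correct and takes essentially the same route as the paper, which simply cites the law of large numbers for stationary processes (Doob) together with Meyn--Tweedie; that citation packages exactly the chain you spell out (invariance of $\mu$, Birkhoff's theorem under $\prob^\mu$, triviality of the invariant $\sigma$-algebra from uniqueness of $\mu$ or from the total-variation mixing, and disintegration over the initial law). Nothing further is needed.
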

The proof of the above result follows from the law of large numbers for stationary processes \cite[Chapter X, Theorem 2.1]{Doob1953}; for details see \cite[Theorem 17.1.2]{Meyn2009}.

The solution of the stopping problem \eqref{eqn:stopping_problem} is determined by the value $\mu(f)$. The case $\mu(f) > 0$ is trivial: it is optimal \emph{not} to stop and the value function is infinite:
\begin{lemma}\label{lem:mu_f_positive}
Assume \ref{ass:ergodic}. If $\mu(f) > 0$ then $w(x) = \infty$ and it is optimal to take $\tau = \infty$,  $\prob^x$-a.s. for all $x \in E$.
\end{lemma}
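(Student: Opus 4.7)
The plan is to use $\tau=\infty$ as the candidate stopping time and show that the corresponding expected payoff already diverges to $+\infty$; since $w(x)$ is a supremum, this forces $w(x)=\infty$ and makes $\tau=\infty$ (trivially) optimal.

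The first step is to substitute $\tau=\infty$ into the functional, which (using $\tau\wedge T=T$) gives
\[
\limsup_{T\to\infty} \ee^x\Big\{\int_0^T f(X_s)\,ds+g(X_T)\Big\}.
\]
Because $g$ is bounded by assumption \ref{ass:continuity}, the second term is uniformly bounded in $T$, so the question reduces to the behaviour of $\ee^x\int_0^T f(X_s)\,ds$.

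The second step is to apply Fubini's theorem (legitimate because $f$ is bounded) to write
\[
\ee^x\Big\{\int_0^T f(X_s)\,ds\Big\}=\int_0^T P_s f(x)\,ds.
\]
Note that the LLN of Lemma \ref{lem:LLN} only guarantees pointwise convergence for $\mu$-a.e. $x$, so we cannot invoke it directly for an arbitrary starting point. Instead we exploit the total variation convergence in assumption \ref{ass:ergodic}: since $f$ is bounded, $|P_s f(x)-\mu(f)|\le\|f\|_\infty\|P_s(x,\cdot)-\mu\|_{TV}\to 0$ as $s\to\infty$. Thus $P_s f(x)\to\mu(f)>0$ for \emph{every} $x\in E$.

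The third step is a Cesaro-type argument: since $P_s f(x)\to\mu(f)>0$, for any $0<c<\mu(f)$ there exists $s_0$ such that $P_sf(x)\ge c$ for $s\ge s_0$, so $\int_0^T P_s f(x)\,ds\ge c(T-s_0)-\|f\|_\infty s_0\to\infty$. Combining this with the boundedness of $g(X_T)$ yields
\[
\liminf_{T\to\infty}\ee^x\Big\{\int_0^T f(X_s)\,ds+g(X_T)\Big\}=+\infty,
\]
hence the $\limsup$ is also $+\infty$. Therefore $w(x)\ge+\infty$, so $w(x)=\infty$, and $\tau\equiv\infty$ attains this value, proving optimality. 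There is no real obstacle here beyond the subtlety that the LLN in Lemma \ref{lem:LLN} is only $\mu$-a.s., which is why we use total variation convergence instead.
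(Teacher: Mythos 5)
Your proof is correct and follows essentially the same route as the paper's: take $\tau=\infty$, use the total variation convergence in \ref{ass:ergodic} to get $P_sf(x)\to\mu(f)>0$ for every $x$, and conclude that $\int_0^T P_sf(x)\,ds\to\infty$ while $g$ stays bounded. You simply spell out the Fubini step and the Cesaro estimate that the paper leaves implicit.
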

\begin{proof}
The ergodicity of the process $(X_t)$ implies that $\lim_{t \to \infty} P_t f(x) = \mu(f) > 0$, so for $\tau = \infty$ we have
\[
\limsup_{T \to \infty} \ee^x \Big\{ \int_0^{T} f(X_s) ds + g(X_{T}) \Big\} \ge \limsup_{T \to \infty} \int_0^T P_s f(x) ds - \|g\|_\infty = \infty.
\]
\end{proof}
The case $\mu(f) < \infty$ is the main topic of this paper and results in a much more interesting behaviour. In particular, the value function is often finite. The case of $\mu(f) = 0$ is a grey area -- a transition point between interesting and trivial results. Our methods do not allow us to provide a thorough study of this case. However, a glimpse at possible results can be caught if one takes $f = 0$. Obviously, the value function is finite, but the existence of an optimal stopping time depends on the interplay between the process and the function $g$ as the following example shows.
\begin{example}
Let $(X_t)$ be an Ornstein-Uhlenbeck process
\[
dX_t = \theta (\mu - X_t) dt + \sigma dW_t.
\]
Take $f \equiv 0$ and $g(x) = \arctan(x)$. Obviously, $w(x) \le \max_x g(x) = \pi$. Take a stopping time $\tau_\ve = \inf \{ t \ge 0: X_t \ge K(\ve) \}$, where $K(\ve) = \tan (\pi - \ve)$. This stopping time is $\ve$-optimal because $g(X_{\tau_\ve}) \ge \pi - \ve \ge w(x) - \ve$. This also proves that $w(x) = \pi$. However, there is no optimal stopping time as the objective function $g(x)$ grows when $x$ increases.
\end{example}

\subsection{Optimal stopping times}\label{subsec:undis_general}
The developments of this subsection are pursued under the following additional assumption:
\begin{assumption}
\item[(B1)]\label{ass:undis_large_dev} For any $x \in E$, there is $d(x) < 0$ such that
\[
\gamma (x) = \sup_{\tau} \limsup_{T \to \infty} \ee^x \Big\{ \int_0^{\tau \wedge T} \big(f(X_s) - d(x)\big) ds \Big\} < \infty.
\]
\end{assumption}
Sufficient conditions will be provided in the following subsections.

Although the above assumption does not state explicitly that $\mu(f) <0$, this is a necessary condition. If $\mu(f) \ge 0$ then $\mu(f) - d(x) > 0$ and by ergodicity
\[
\lim_{t \to \infty} \ee^x \{ f(X_t) - d(x) \} = \mu(f) - d(x) > 0,
\]
which implies $\gamma(x) = \infty$.

\begin{remark}\label{rem:B1}
It is vital for applications to notice that if assumption \ref{ass:undis_large_dev} is satisfied for a function $f$, it is also satisfied for any function $f' \le f$. This trivial observation greatly extends the applicability of sufficient conditions for \ref{ass:undis_large_dev} discussed in Subsections \ref{subsec:undis_potential} and \ref{subsec:large_dev}.
\end{remark}

Assumption \ref{ass:undis_large_dev} allows us to restrict the set of stopping times in \eqref{eqn:stopping_problem} to those with finite expectation.
\begin{lemma}\label{lem:bound_for_stop}
For every integrable $\ve$-optimal stopping time $\sigma$ we have
\[
\ee^x \{ \sigma \} \le \frac{\gamma(x) + 2\|g\| + \ve}{-d(x)}.
\]
\end{lemma}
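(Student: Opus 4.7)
The plan is to isolate a linear-in-$\sigma$ term by adding and subtracting $d(x)$ inside the integrand and then exploiting the sign of $d(x)$. Concretely, I would decompose
\[
\ee^x \Big\{ \int_0^{\sigma \wedge T} f(X_s) ds + g(X_{\sigma \wedge T}) \Big\}
= \ee^x \Big\{ \int_0^{\sigma \wedge T} \bigl(f(X_s) - d(x)\bigr) ds \Big\}
 + d(x)\, \ee^x \{\sigma \wedge T\} + \ee^x \{g(X_{\sigma \wedge T})\}.
\]
The first summand is, by assumption \ref{ass:undis_large_dev}, bounded in $\limsup_{T \to \infty}$ by $\gamma(x)$. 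The third summand is trivially bounded in absolute value by $\|g\|$. Since $\sigma$ is assumed integrable, monotone convergence yields $\ee^x\{\sigma \wedge T\} \nearrow \ee^x\{\sigma\} < \infty$, so the middle summand converges to $d(x)\, \ee^x\{\sigma\}$.

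Next I would use that a limit added to two $\limsup$'s gives an upper bound on the $\limsup$ of the sum: since the middle summand converges, passing to $\limsup_{T \to \infty}$ yields
\[
\limsup_{T \to \infty} \ee^x \Big\{ \int_0^{\sigma \wedge T} f(X_s) ds + g(X_{\sigma \wedge T}) \Big\}
\le \gamma(x) + d(x)\, \ee^x\{\sigma\} + \|g\|.
\]
On the other side, $\sigma$ is $\varepsilon$-optimal and the constant stopping time $\tau \equiv 0$ gives the trivial lower bound $w(x) \ge g(x) \ge -\|g\|$, so the left-hand side is at least $w(x) - \varepsilon \ge -\|g\| - \varepsilon$.

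Combining these two inequalities gives $-d(x)\, \ee^x\{\sigma\} \le \gamma(x) + 2\|g\| + \varepsilon$, and dividing by $-d(x) > 0$ finishes the proof. There is no real obstacle here; the only delicate point is the exchange of limits when distributing $\limsup_{T \to \infty}$ across the decomposition, which is resolved precisely by the integrability of $\sigma$ (guaranteeing convergence, not just $\limsup$, of the middle term) together with the uniform boundedness of $g$.
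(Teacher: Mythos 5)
Your proof is correct and takes essentially the same route as the paper: decompose by adding and subtracting $d(x)$ in the integrand, bound the resulting term by $\gamma(x)$ via \ref{ass:undis_large_dev}, bound $g$ by $\|g\|$, and combine with the lower bound $w(x)-\ve \ge -\|g\|-\ve$ coming from $\ve$-optimality. The only cosmetic difference is that you handle the $\sigma\wedge T$ truncation and the $\limsup$ explicitly, whereas the paper delegates this to Remark \ref{rem:integrable_tau} (for integrable $\sigma$ the $\limsup$ equals the expectation evaluated at $\sigma$).
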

\begin{proof}
Since $\sigma$ is integrable, we have
\begin{multline*}
\ee^x \Big\{ \int_0^{\sigma} f(X_s)ds + g(X_{\tau})\Big\}
=
\ee^x \Big\{ \int_0^{\sigma} \big(f(X_s)-d(x)\big)ds + d(x) \sigma  + g(X_{\sigma})\Big\}\\
\le
\gamma(x) + \|g\| + d(x) \ee^x\{ \sigma\}.
\end{multline*}
On the other hand,
\[
\ee^x \Big\{ \int_0^{\sigma} f(X_s)ds + g(X_{\tau})\Big\} \ge w(x) - \ve \ge -\|g\| - \ve.
\]
Combining the above two inequalities,
\[
-\|g\| - \ve \le \gamma(x) + \|g\| + d(x) \ee^x\{ \sigma\},
\]
completes the proof.
\end{proof}

It follows from \eqref{eqn:stopping_problem} that the value function can be defined as a supremum over bounded (hence, integrable) stopping times. The above lemma proves that the optimisation can be restricted to those stopping times whose expectation is bounded by
\begin{equation}\label{eqn:Mx}
M(x) = \frac{\gamma(x) + 2\|g\| + 1}{-d(x)}.
\end{equation}
Hence, the value function $w(x)$ is bounded from above by $\|f\| M(x) + \|g\|$.

\begin{remark}\label{rem:integrable_tau}
By the dominated convergence theorem and quasi left-continuity of $(X_t)$ (see \cite[Theorem 3.13]{Dynkin1965}) whenever $\ee^x \{\tau\} < \infty$ we have
\[
\limsup_{T \to \infty} \ee^x \Big\{ \int_0^{\tau\wedge T} f(X_s) ds + g(X_{\tau \wedge T})  \Big\}
= \ee^x \Big\{ \int_0^\tau f(X_s) ds + g(X_\tau) \Big\}.
\]
\end{remark}

\begin{lemma}\label{lem:lsc_bounded_q}
The value function $w$ is lower semi-continuous.
\end{lemma}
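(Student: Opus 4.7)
The plan is to realise $w$ as the pointwise supremum of a non-decreasing family of continuous functions, which yields lower semi-continuity automatically.

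\textbf{Step 1 (time discretisation).} For each $n \ge 1$ let $w^{(n)}$ denote the value obtained by restricting the supremum in \eqref{eqn:stopping_problem} to stopping times valued in the dyadic grid $G_n := \{k 2^{-n}: k \in \en\}$. The refinement $G_n \subset G_{n+1}$ yields the monotonicity $w^{(n)} \le w^{(n+1)} \le w$. Conversely, by the reduction following Lemma \ref{lem:bound_for_stop} the supremum in \eqref{eqn:stopping_problem} may be taken over integrable stopping times $\sigma$, and for each such $\sigma$ the dyadic ceiling $\sigma_n := 2^{-n}\lceil 2^n \sigma\rceil \in G_n$ satisfies $0 \le \sigma_n - \sigma \le 2^{-n}$. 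Right-continuity of the paths of $X$, boundedness of $f$ and $g$, and Remark \ref{rem:integrable_tau} then give
\[
\ee^x\Big\{\int_0^{\sigma_n} f(X_s)\, ds + g(X_{\sigma_n})\Big\} \longrightarrow \ee^x\Big\{\int_0^{\sigma} f(X_s)\, ds + g(X_{\sigma})\Big\},
\]
and taking the supremum over $\sigma$ yields $w(x) = \sup_n w^{(n)}(x)$.

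\textbf{Step 2 (continuity of $w^{(n)}$).} Fix $n$, set $\delta = 2^{-n}$, and introduce the Bellman operator
\[
T_\delta v(x) := \max\Big(g(x),\ \int_0^\delta P_s f(x)\, ds + P_\delta v(x)\Big).
\]
By the strong Markov property $T_\delta^k g$ is the optimal value over grid stopping times bounded by $k\delta$, and $T_\delta^k g \uparrow w^{(n)}$ as $k \to \infty$. I would prove by induction on $k$ that each $T_\delta^k g$ is bounded and continuous; the inductive step requires that $P_\delta$ maps bounded continuous functions to bounded continuous functions and that $x \mapsto \int_0^\delta P_s f(x)\, ds$ is continuous. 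Both facts are consequences of the weak Feller property \ref{ass:weak_feller} combined with uniform tightness of the family $\{P_s(x, \cdot): x \in K,\ s \in [0, \delta]\}$ on each compact set $K \subset E$, which I would extract from the total-variation convergence to the invariant measure in \ref{ass:ergodic}. Hence $w^{(n)}$ is a monotone supremum of continuous functions, and therefore lower semi-continuous.

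\textbf{Step 3 (conclusion).} As a pointwise supremum of the lower semi-continuous family $\{w^{(n)}\}$, the function $w$ is itself lower semi-continuous.

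The principal technical obstacle is Step 2: the weak Feller assumption \ref{ass:weak_feller} only guarantees $P_t \mathcal{C}_0 \subset \mathcal{C}_0$, so extending the preservation of continuity to bounded continuous functions that do not vanish at infinity requires a uniform tightness argument for the transition kernels constructed from the ergodicity hypothesis \ref{ass:ergodic}.
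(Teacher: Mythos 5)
Your argument is correct in substance, but it takes a different route from the paper. The paper writes $w$ as the increasing pointwise limit of the finite-horizon value functions $w_T$ of \eqref{eqn:w_T} and simply cites \cite[Corollary 3.6]{Palczewski2008} for the continuity of each $w_T$ under the weak Feller property and the continuity and boundedness of $f$ and $g$; lower semicontinuity of $w$ is then immediate. You instead truncate in time rather than in horizon: you restrict to dyadic-grid stopping times, check that the dyadic ceiling of a bounded stopping time approximates its payoff (right-continuity of paths plus boundedness of $f,g$ make this sound), and then establish regularity of each $w^{(n)}$ by iterating a one-step Bellman operator. This buys you a self-contained proof at the cost of re-deriving, by hand, essentially the same dynamic-programming continuity result that the paper outsources to the citation. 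Both proofs share the same skeleton (monotone supremum of regular functions), so neither is more general; yours is longer but more transparent about where the weak Feller hypothesis enters.

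One correction to Step 2: the uniform tightness of $\{P_s(x,\cdot):\ x\in K,\ s\in[0,\delta]\}$ that you need to upgrade $P_\delta:\mathcal{C}_0\to\mathcal{C}_0$ to preservation of bounded continuity does \emph{not} come from the ergodicity assumption \ref{ass:ergodic} --- total-variation convergence to $\mu$ is a statement about $t\to\infty$, pointwise in $x$, and gives no control over small times or uniformity over compacts. The correct source is the weak Feller property \ref{ass:weak_feller} itself, via the compact-containment estimate recorded in the paper as Lemma \ref{lem:compactness_of_Feller} (i.e.\ \cite[Proposition 2.1]{Palczewski2008}). With that substitution your tightness-plus-cutoff argument ($v=v\chi_n+v(1-\chi_n)$ with $v\chi_n\in\mathcal{C}_0$) does go through, and the rest of the proof is fine.
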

\begin{proof}
Let
\begin{equation}\label{eqn:w_T}
w_T(x) = \sup_{\tau \le T} \ee^x \Big\{ \int_0^{\tau} f(X_s) ds + g(X_{\tau}) \Big\}.
\end{equation}
Due to the continuity and boundedness of functions $f$ and $g$ as well as the weak Feller property of the underlying process $(X_t)$, the value function $w_T$ is continuous, see \cite[Corollary 3.6]{Palczewski2008}.
Functions $w_T$ converge pointwise to $w$ and they form an increasing sequence. Hence, $w$ is lower semicontinuous.
\end{proof}

The construction of an optimal stopping time requires a few intermediate steps achieved in lemmas below.

\begin{lemma}\label{lem:undisc_sigma_m}
For every $x \in E$, there exists an non-decreasing sequence $\sigma_m$ of bounded $\frac1m$-optimal stopping times for $w(x)$.
\end{lemma}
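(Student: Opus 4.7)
The plan is to build $\sigma_m$ from the finite-horizon approximations $w_T$ introduced in \eqref{eqn:w_T} in the proof of Lemma \ref{lem:lsc_bounded_q}, using classical Snell-envelope theory. The starting point is the pointwise monotone convergence $w_T \uparrow w$.

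To establish $w_T \uparrow w$, the inequality $w_T \le w$ is immediate from the definitions. For the reverse direction I would use Lemma \ref{lem:bound_for_stop} to restrict the supremum defining $w$ to integrable stopping times, and for each such $\tau$ apply the dominated convergence theorem (using that $f$ and $g$ are bounded and $\ee^x\{\tau\}<\infty$) to obtain $\ee^x\{\int_0^{\tau\wedge T} f(X_s)\,ds + g(X_{\tau\wedge T})\} \to \ee^x\{\int_0^\tau f(X_s)\,ds + g(X_\tau)\}$. By Remark \ref{rem:integrable_tau} the right-hand side coincides with the $\limsup$ appearing in the definition of $w$, so $w(x) \le \sup_T w_T(x)$, completing the convergence.

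Given $w_T \uparrow w$, I would choose $T_1 < T_2 < \cdots \to \infty$ with $w_{T_m}(x) > w(x) - \tfrac{1}{m}$. For the finite-horizon problem of horizon $T_m$, classical Snell-envelope theory (applicable since $f,g$ are continuous and bounded, $(X_t)$ is weak Feller by \ref{ass:weak_feller}, and $w_T$ is continuous by the proof of Lemma \ref{lem:lsc_bounded_q}) furnishes the optimal stopping time
\[
\sigma_m := \inf\bigl\{t \in [0,T_m] : w_{T_m - t}(X_t) = g(X_t)\bigr\}.
\]
Because $w_0 = g$, the set above always contains $T_m$, so $\sigma_m \le T_m$ is bounded, and the standard optimality identity yields $\ee^x\{\int_0^{\sigma_m} f(X_s)\,ds + g(X_{\sigma_m})\} = w_{T_m}(x) > w(x) - \tfrac{1}{m}$; hence $\sigma_m$ is $\tfrac{1}{m}$-optimal for $w$.

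The decisive observation is the monotonicity $\sigma_m \le \sigma_{m+1}$, which comes essentially for free from this construction: since enlarging the horizon cannot decrease the value, $w_{T_{m+1}-s}(y) \ge w_{T_m - s}(y)$ for all $s \le T_m$ and $y \in E$, so whenever $s < \sigma_m$ we have $w_{T_m - s}(X_s) > g(X_s)$, which forces $w_{T_{m+1}-s}(X_s) > g(X_s)$ and hence $s < \sigma_{m+1}$. The only non-trivial external input is the existence of an optimal stopping time for the continuous-bounded-reward, weakly Feller, finite-horizon problem, which is classical Snell-envelope theory; this will be the main step to invoke carefully, but no further non-routine difficulty is expected.
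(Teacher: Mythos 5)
Your proposal is correct and follows essentially the same route as the paper: identify $w$ with $\sup_T w_T$, invoke the finite-horizon theory (the paper cites \cite[Corollary 3.6]{Palczewski2008}) to get optimal stopping times $\tau_T=\inf\{t\le T:\ g(X_t)\ge w_{T-t}(X_t)\}$, and deduce monotonicity of $\sigma_m$ from the monotonicity of $w_T$ in $T$. The only cosmetic difference is that the identity $w=\sup_T w_T$ is immediate from truncation ($\tau\wedge T$ is a bounded stopping time, so each term in the $\limsup$ is $\le w_T(x)$), without any appeal to Lemma \ref{lem:bound_for_stop} or dominated convergence.
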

\begin{proof}
The stopping problem $w(x)$ can be equivalently written as a supremum over bounded stopping times. Hence, $w_T(x)$ defined in \eqref{eqn:w_T} approximate $w(x)$ from below. By \cite[Corollary 3.6]{Palczewski2008}, these stopping problems admit optimal solutions of the form
\[
\tau_T = \inf \{ t \in [0, T]:\ g(X_t) \ge w_{T-t}(X_t) \}.
\]
As $w_T$ are non-decreasing in $T$, it is easy to see that $\tau_T$ are non-decreasing in $T$ as well. By setting $T(m) = \inf \{ T \ge 0:\ w(x) - w_T(x) \le \frac1m \}$, we can set $\sigma_m = \tau_{T(m)}$.
\end{proof}

In what follows we shall denote by $(\sigma_m)$ the sequence of stopping times from the above lemma.

\begin{lemma}\label{lem:super_mart}\label{lem:undisc_snell}
The process $Z_t := \int_0^t f(X_s) ds + w(X_t)$ is a right-continuous $\prob^x$-supermartingale for any $x \in E$. Moreover, for a bounded stopping time $\sigma$ and an arbitrary stopping time $\tau$
\begin{equation}\label{eqn:undisc_bellman_ineq}
\ee^{x} \Big \{ \int_0^\sigma f(X_s) ds + g(X_\sigma) \Big\}
\le \ee^{x} \Big\{ \int_0^{\tau \wedge \sigma} f(X_s) ds + \ind{\sigma < \tau} g(X_{\sigma}) + \ind{\sigma \ge \tau} w(X_{\tau}) \Big\}.
\end{equation}
\end{lemma}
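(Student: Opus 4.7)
The proof splits into three tasks: (a) the supermartingale inequality, (b) right-continuity, and (c) the Bellman-type bound \eqref{eqn:undisc_bellman_ineq}. The engine driving all of them is the deterministic-time dynamic programming inequality
\begin{equation*}
w(y) \ge \ee^y \Big\{ \int_0^t f(X_u)\, du + w(X_t) \Big\}, \qquad y \in E,\ t \ge 0.
\end{equation*}
To obtain it, note that for any bounded stopping time $\tilde\tau \le T$ the shifted time $t + \tilde\tau \circ \theta_t$ is a bounded stopping time, so the strong Markov property together with the definition of $w_T$ in \eqref{eqn:w_T} give $w_{T+t}(y) \ge \ee^y\{\int_0^t f(X_s)\, ds + w_T(X_t)\}$ once one takes the essential supremum over $\tilde\tau$ via a standard measurable selection argument (justified by continuity of $w_T$ from Lemma \ref{lem:lsc_bounded_q}). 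Letting $T \to \infty$ with $w_T \uparrow w$ and $w_T \ge -\|g\|$ yields the claim by monotone convergence.

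Part (a) is then immediate from the Markov property: for $s \le u$,
\begin{equation*}
\ee^x\{ Z_u \mid \ef_s\} - Z_s = \ee^{X_s}\Big\{ \int_0^{u-s} f(X_v)\, dv + w(X_{u-s})\Big\} - w(X_s) \le 0.
\end{equation*}
For part (b) I rely on the standard fact that a supermartingale adapted to a right-continuous filtration admits a càdlàg modification as soon as $t \mapsto \ee^x\{Z_t\}$ is right-continuous. Since the integral term is pathwise continuous, matters reduce to right-continuity of $t \mapsto \ee^x\{w(X_t)\}$. The upper bound $\limsup_{h\downarrow 0}\ee^x\{w(X_{t+h})\} \le \ee^x\{w(X_t)\}$ follows from the supermartingale inequality just obtained, while Fatou's lemma applied to the lower semicontinuous function $w$ (Lemma \ref{lem:lsc_bounded_q}) on the right-continuous paths of $(X_t)$ supplies the matching lower bound.

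For part (c), decompose the LHS of \eqref{eqn:undisc_bellman_ineq} using $\ind{\sigma < \tau} + \ind{\sigma \ge \tau} = 1$. On $\{\sigma < \tau\}$ the contribution equals $\ee^x\{\ind{\sigma<\tau}(\int_0^{\sigma \wedge \tau} f(X_s)\, ds + g(X_\sigma))\}$, matching the first two pieces of the RHS exactly. On $\{\sigma \ge \tau\}$ the quantity $\sigma - \tau$ is, on this event, a bounded stopping time for the filtration shifted by $\tau$ (using boundedness of $\sigma$), so the strong Markov property together with the trivial bound $\ee^y\{\int_0^{\sigma'} f(X_s)\, ds + g(X_{\sigma'})\} \le w(y)$ valid for every bounded stopping time $\sigma'$ yield
\begin{equation*}
\ee^x\Big\{\ind{\sigma \ge \tau} \Big(\int_\tau^\sigma f(X_s)\, ds + g(X_\sigma)\Big)\Big\} \le \ee^x\{\ind{\sigma \ge \tau}\, w(X_\tau)\};
\end{equation*}
adding $\ee^x\{\ind{\sigma\ge\tau}\int_0^\tau f(X_s)\, ds\}$ to both sides and recombining with the $\{\sigma<\tau\}$ piece reproduces the RHS of \eqref{eqn:undisc_bellman_ineq}. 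The subtlest point is (b): since $w$ is only lower semicontinuous at this stage, pathwise right-continuity of $w(X_t)$ is not automatic, and one genuinely needs the supermartingale machinery (passing through a càdlàg modification) rather than a direct trajectory-level argument.
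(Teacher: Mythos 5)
Your route is genuinely different from the paper's: the paper writes $Z_t=\sup_{T\ge t}Z^T_t$ where $Z^T_t=\int_0^t f(X_s)ds+w_{T-t}(X_t)$ is the finite-horizon Snell envelope (hence a right-continuous supermartingale by classical theory), invokes Meyer's theorem on increasing limits of right-continuous supermartingales to get \emph{both} the supermartingale property and pathwise right-continuity of $Z$ itself in one stroke, and then obtains \eqref{eqn:undisc_bellman_ineq} as a two-line consequence of optional sampling. Your parts (a) and (c) are workable substitutes (modulo the measurable-selection details you acknowledge, and the point in (c) that $(\sigma-\tau)^+$ is a stopping time of $(\ef_{\tau+t})$, whose time-zero $\sigma$-field $\ef_\tau$ is strictly larger than $\sigma(X_\tau)$, so one must additionally argue that this independent enlargement does not increase the supremum defining $w(X_\tau)$ --- exactly the technicality that optional sampling of $Z$ packages away).

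The genuine gap is in part (b). The lemma asserts that the process $Z$ \emph{itself} is right-continuous, and this is not cosmetic: in Lemma \ref{lem:undisc_tech} and Theorem \ref{thm:optimal_time_bounded_q} the inequality \eqref{eqn:undisc_bellman_ineq} is used with the actual values $w(X_{\tau_\ve})$, $w(X_{\sigma_m\wedge\tau_\ve})$ at stopping times, and these are compared pointwise with $g$ via the definition of $\tau_\ve$. Your argument produces only a c\`adl\`ag \emph{modification} $\tilde Z$: right-continuity of $t\mapsto\ee^x\{Z_t\}$ plus the pathwise lower semicontinuity from the right give $\tilde Z_t\ge Z_t$ for all $t$ a.s.\ and $\tilde Z_t=Z_t$ a.s.\ for each fixed $t$, but not $\tilde Z_t=Z_t$ for all $t$ simultaneously, and a modification need not agree with $Z$ at a stopping time such as $\tau_\ve$. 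You flag this as ``the subtlest point'' but then settle for the modification without closing it. Closing it is precisely what the representation $Z=\sup_T Z^T$ together with Meyer's Theorem T16 accomplishes (the upper envelope of an increasing sequence of right-continuous supermartingales is itself right-continuous, not merely indistinguishable from one after modification); without that representation, or some equivalent device, your proof of (b) does not deliver the statement as used downstream.
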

\begin{proof}
Define $Z_t^T = \int_0^t f(X_s) ds + w_{T-t}(X_t)$, $t \in [0, T]$. This process is a Snell envelope of the process $t \mapsto \int_0^t f(X_s) ds + g(X_t)$, so, in particular, it is a right-continuous supermartingale \cite[Theorem 2.4]{Peskir2006}. These supermartingales are increasing in $T$ as so is $w_{T-t}$.  Notice that $Z_t = \sup_{T \ge t} Z_t^T$ $\prob^x$-a.s. Theorem T16 in \cite{Meyer1966} and the remark following it imply that $(Z_t)$ is a right-continuous supermartingale.

Since $\sigma$ is a bounded stopping time, the optional sampling theorem yields $\ee^x \{ Z_{\sigma} | \ef_{\tau \wedge \sigma} \} \le Z_{\tau \wedge \sigma}$. This reads
\[
\int_0^{\tau \wedge \sigma} f(X_s) ds + w(X_{\tau \wedge \sigma}) \ge \ee^{x} \Big \{ \int_0^\sigma f(X_s) ds + w(X_\sigma) \Big| \ef_{\tau \wedge \sigma} \Big\}.
\]
Hence,
\begin{align*}
\int_0^{\tau \wedge \sigma} f(X_s) ds + \ind{\sigma \ge \tau} w(X_{\tau \wedge \sigma})
&\ge
\ee^{x} \Big \{ \int_0^\sigma f(X_s) ds + \ind{\sigma \ge \tau} w(X_\sigma) \Big| \ef_{\tau \wedge \sigma} \Big\}\\
&\ge
\ee^{x} \Big \{ \int_0^\sigma f(X_s) ds + \ind{\sigma \ge \tau} g(X_\sigma) \Big| \ef_{\tau \wedge \sigma} \Big\}.
\end{align*}
Adding $\ind{\sigma < \tau} g(X_\sigma)$ to both sides completes the proof.
\end{proof}

Due to the lower semi-continuity of $w$ and the continuity of $g$, the random variable
\[
\tau_\ve = \inf \{t \ge 0:\ g(X_t) \ge w(X_t) - \ve \}
\]
is a stopping time and $g(X_{\tau_\ve})\ge w(X_{\tau_\ve})-\ve$ on $\{\tau_\ve < \infty\}$.

\begin{lemma}\label{lem:undisc_tech}
For any $x \in E$, we have $\displaystyle \prob^x \big\{ \sigma_m < \tau_\ve\big\} \le \frac{1}{m \ve}.$
\end{lemma}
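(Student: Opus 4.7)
The plan is to combine two ingredients: the supermartingale property of $Z_t = \int_0^t f(X_s)\,ds + w(X_t)$ from Lemma \ref{lem:undisc_snell} and the $\tfrac1m$-optimality of $\sigma_m$ from Lemma \ref{lem:undisc_sigma_m}. Since $\sigma_m$ is bounded, optional sampling applied to the right-continuous supermartingale $(Z_t)$ gives
\[
w(x) \;=\; Z_0 \;\ge\; \ee^x\{Z_{\sigma_m}\} \;=\; \ee^x\Big\{\int_0^{\sigma_m} f(X_s)\,ds + w(X_{\sigma_m})\Big\}.
\]

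Next I would exploit the definition of $\tau_\ve$ to get a lower bound on $w(X_{\sigma_m})$ in terms of $g(X_{\sigma_m})$. Because $w \ge g$ pointwise on $E$ (taking $\tau \equiv 0$ in \eqref{eqn:stopping_problem} is admissible), and because $w(X_t) - g(X_t) > \ve$ for all $t < \tau_\ve$ by the very definition of the hitting time $\tau_\ve$, we obtain
\[
w(X_{\sigma_m}) \;\ge\; g(X_{\sigma_m}) + \ve\,\ind{\sigma_m < \tau_\ve}.
\]
Substituting this into the supermartingale inequality yields
\[
w(x) \;\ge\; \ee^x\Big\{\int_0^{\sigma_m} f(X_s)\,ds + g(X_{\sigma_m})\Big\} + \ve\,\prob^x\{\sigma_m < \tau_\ve\}.
\]

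Finally, the $\tfrac1m$-optimality of $\sigma_m$ bounds the expectation from below by $w(x) - \tfrac1m$, so
\[
w(x) \;\ge\; w(x) - \tfrac1m + \ve\,\prob^x\{\sigma_m < \tau_\ve\},
\]
and rearranging gives exactly $\prob^x\{\sigma_m < \tau_\ve\} \le \tfrac{1}{m\ve}$. There is no real obstacle here—everything hinges on recognising that $\{\sigma_m < \tau_\ve\}$ is precisely the event on which stopping at $\sigma_m$ incurs at least an $\ve$ gap between $w(X_{\sigma_m})$ and $g(X_{\sigma_m})$, so this event cannot have large probability without violating the near-optimality of $\sigma_m$ against the supermartingale bound.
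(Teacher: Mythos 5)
Your proof is correct and rests on the same three ingredients as the paper's: the supermartingale property of $Z_t$ from Lemma \ref{lem:undisc_snell}, the strict gap $w(X_t)-g(X_t)>\ve$ for $t<\tau_\ve$, and the $\tfrac1m$-optimality of $\sigma_m$. The only (harmless) difference is that you apply optional sampling directly at $\sigma_m$ and absorb the event $\{\sigma_m<\tau_\ve\}$ via the pointwise bound $w\ge g+\ve\,\ind{\sigma_m<\tau_\ve}$, whereas the paper routes the same estimate through the two-stopping-time inequality \eqref{eqn:undisc_bellman_ineq} at $\sigma_m\wedge\tau_\ve$; this is a slight streamlining, not a different argument.
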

\begin{proof}
Apply \eqref{eqn:undisc_bellman_ineq} to $\sigma_m$ and $\tau_\ve$
\begin{align*}
w(x) - \frac1m
&\le \ee^{x} \Big \{ \int_0^{\sigma_m} f(X_s) ds + g(X_\sigma) \Big\}\\
&\le \ee^{x} \Big\{ \int_0^{\sigma_m\wedge \tau_\ve}  f(X_s) ds + \ind{\sigma_m < \tau_\ve} g(X_{\sigma_m}) + \ind{\sigma_m \ge \tau_\ve} w(X_{\tau_\ve}) \Big\}\\
&\le  \ee^{x} \Big\{ \int_0^{\sigma_m\wedge \tau_\ve} f(X_s) ds + \ind{\sigma_m < \tau_\ve} \big( w(X_{\sigma_m}) - \ve \big) + \ind{\sigma_m \ge \tau_\ve} w(X_{\tau_\ve})\Big\}\\
&\le  \ee^{x} \Big\{ \int_0^{\sigma_m\wedge \tau_\ve} f(X_s) ds + w(X_{\sigma_m \wedge \tau_\ve}) \Big\} - \ve \prob^x \big\{\sigma_m < \tau_\ve\big\} \\
&\le w(x) - \ve \prob^x \big\{\sigma_m < \tau_\ve \big\},
\end{align*}
where the third inequality follows from the observation that $g(X_{\sigma_m}) < w(X_{\sigma_m}) - \ve$ on $\{ \sigma_m < \tau_\ve \}$ and the last inequality is the result of \eqref{eqn:undisc_bellman_ineq} applied for $\tau \equiv 0$ and $\sigma = \sigma_m\wedge \tau_\ve$.
\end{proof}

\begin{lemma}\label{lem:bound_expectation}
Let $(X_n)$ be a sequence of positive random variables with $\ee X_n \le 1$. If $Y$ is a positive random variable (with a possibly infinite expectation) and $\lim_{n \to \infty} \prob\{Y > X_n\} =0$ then $\ee Y \le 1$.
\end{lemma}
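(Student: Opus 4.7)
The plan is to reduce the question to bounded random variables by truncating $Y$ at an arbitrary level $K>0$ and then letting $K\to\infty$ via monotone convergence. The point of the truncation is that, while $Y$ itself might have infinite expectation, $Y\wedge K$ is integrable, and the inequality we want for $\mathbb{E}Y$ will follow from showing $\mathbb{E}(Y\wedge K)\le 1$ uniformly in $K$.

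First I would split the sample space according to the sign of $Y-X_n$. On $\{Y\le X_n\}$ we have the pointwise bound $Y\mathbb{1}_{\{Y\le X_n\}}\le X_n$, which via $\mathbb{E}X_n\le 1$ gives
\[
\mathbb{E}\bigl[Y\mathbb{1}_{\{Y\le X_n\}}\bigr]\le \mathbb{E}X_n\le 1.
\]
On the complementary event I would use the trivial bound $(Y\wedge K)\mathbb{1}_{\{Y>X_n\}}\le K\mathbb{1}_{\{Y>X_n\}}$, whose expectation is $K\mathbb{P}\{Y>X_n\}$.

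Combining the two estimates and using $Y\wedge K\le Y$ on $\{Y\le X_n\}$, I obtain
\[
\mathbb{E}(Y\wedge K)\le \mathbb{E}\bigl[Y\mathbb{1}_{\{Y\le X_n\}}\bigr] + K\,\mathbb{P}\{Y>X_n\}\le 1+K\,\mathbb{P}\{Y>X_n\}.
\]
Since $K$ is fixed and $\mathbb{P}\{Y>X_n\}\to 0$ by hypothesis, passing $n\to\infty$ yields $\mathbb{E}(Y\wedge K)\le 1$. Finally, letting $K\to\infty$ and applying monotone convergence (both $Y$ and $Y\wedge K$ are nonnegative) gives $\mathbb{E}Y\le 1$, as required.

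There is no real obstacle here: the only subtlety is that $Y$ may fail to be integrable, which is precisely what the truncation deals with, and it is essential that the constant $K$ in the bound $K\,\mathbb{P}\{Y>X_n\}$ be fixed \emph{before} sending $n\to\infty$, so that the product vanishes.
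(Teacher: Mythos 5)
Your argument is correct and is essentially the paper's own proof: the same split of $\ee(Y\wedge K)$ over $\{Y\le X_n\}$ and $\{Y>X_n\}$, the bound $1+K\,\prob\{Y>X_n\}$, the limit $n\to\infty$ for fixed $K$, and finally monotone convergence as $K\to\infty$. The only cosmetic difference is that the paper first proves the bounded case and then applies it to $Y\wedge K$, while you fold the truncation directly into the decomposition.
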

\begin{proof}
Assume first that $Y \le K$ a.s. for some constant $K > 0$. Then
\begin{align*}
\ee \{ Y\} &= \ee \{ \ind{Y \le X_n} Y \} + \ee \{ \ind{Y > X_n} Y \}\\
&\le \ee \{ \ind{Y \le X_n} X_n \} + \ee \{ \ind{Y > X_n} Y \}\\
&\le \ee \{ X_n \} + K \prob \{ Y > X_n\} \le 1 + K \prob \{ Y > X_n\}
\end{align*}
and the last term tends to $0$ as $n \to \infty$. Take now an arbitrary $Y$ satisfying assumptions of the lemma. For each $K > 0$, we have $\lim_{n \to \infty} \prob\{Y\wedge K > X_n\} = 0$. Previous considerations imply $\ee \{ Y \wedge K \} \le 1$. By monotone convergence, $\ee \{ Y \} = \lim_{K \to \infty} \ee \{ Y \wedge K \} \le 1$.
\end{proof}

\begin{thm}\label{thm:optimal_time_bounded_q}
Under assumptions \ref{ass:weak_feller}-\ref{ass:continuity} and \ref{ass:undis_large_dev} an optimal stopping time is given by
\[
\tau^* = \inf\{ t \ge 0:\ g(x) \ge w(x) \}.
\]
Moreover, $\ee^x \{ \tau^* \} \le M(x)$, where $M(x)$ is defined in \eqref{eqn:Mx}.
\end{thm}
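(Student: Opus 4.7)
The plan is to realise $\tau^*$ as the monotone limit of the approximating stopping times $\tau_\ve$ (defined just before Lemma~\ref{lem:undisc_tech}) as $\ve \downarrow 0$, and to play them off against the $\tfrac{1}{m}$-optimal stopping times $\sigma_m$ from Lemma~\ref{lem:undisc_sigma_m} via the bound $\prob^x\{\sigma_m < \tau_\ve\} \le \tfrac{1}{m\ve}$ of Lemma~\ref{lem:undisc_tech}. The argument proceeds in three beats: first establish the integrability $\ee^x\{\tau^*\} \le M(x)$, then use it to show $\tau^* = \lim_{\ve \downarrow 0} \tau_\ve$, and finally pass to the limit in \eqref{eqn:undisc_bellman_ineq} to obtain optimality.

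For integrability, Lemma~\ref{lem:bound_for_stop} with $\ve = 1/m \le 1$ gives $\ee^x\{\sigma_m\} \le M(x)$, and Lemma~\ref{lem:bound_expectation} applied to $X_n = \sigma_n/M(x)$ and $Y = \tau_\ve/M(x)$ (so that $\prob^x\{Y > X_n\} \le \tfrac{1}{n\ve} \to 0$) yields $\ee^x\{\tau_\ve\} \le M(x)$ for every $\ve > 0$. Since $\tau_\ve$ is non-decreasing as $\ve \downarrow 0$, monotone convergence gives $\ee^x\{\tau_0\} \le M(x)$, where $\tau_0 := \lim_{\ve \downarrow 0} \tau_\ve \le \tau^*$; in particular $\tau_0 < \infty$ $\prob^x$-a.s.

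To identify $\tau_0$ with $\tau^*$, observe that lower semicontinuity of $w$ (Lemma~\ref{lem:lsc_bounded_q}) together with continuity of $g$ makes each level set $D_\ve := \{w - g \le \ve\}$ closed, so right-continuity of $(X_t)$ forces $X_{\tau_\ve} \in D_\ve$ on $\{\tau_\ve < \infty\}$. Quasi-left-continuity of $(X_t)$, applicable because $\tau_0 < \infty$ a.s., gives $X_{\tau_{\ve_n}} \to X_{\tau_0}$ a.s. along any sequence $\ve_n \downarrow 0$; lower semicontinuity of $w-g$ then yields $(w-g)(X_{\tau_0}) \le \liminf_n (w-g)(X_{\tau_{\ve_n}}) \le 0$. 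Since $w \ge g$ always (take $\tau \equiv 0$), $X_{\tau_0}$ lies in the hitting set $D = \{w \le g\}$ of $\tau^*$, so $\tau^* \le \tau_0$ and equality follows, together with $\ee^x\{\tau^*\} \le M(x)$.

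Optimality then follows by applying \eqref{eqn:undisc_bellman_ineq} with $\sigma = \sigma_m$ and $\tau = \tau^*$, using $w(X_{\tau^*}) = g(X_{\tau^*})$ and the $\tfrac{1}{m}$-optimality of $\sigma_m$:
\[
w(x) - \tfrac{1}{m} \le \ee^x\Big\{ \int_0^{\sigma_m \wedge \tau^*} f(X_s)\,ds + \ind{\sigma_m < \tau^*} g(X_{\sigma_m}) + \ind{\sigma_m \ge \tau^*} g(X_{\tau^*}) \Big\}.
\]
The inclusion $\{\sigma_m < \tau^*\} \subseteq \{\sigma_m < \tau_\ve\} \cup \{\tau_\ve < \tau^*\}$ forces $\prob^x\{\sigma_m < \tau^*\} \to 0$ (send $m \to \infty$, then $\ve \downarrow 0$, using $\tau_\ve \uparrow \tau^*$), and dominated convergence with the integrable dominator $\|f\|\tau^* + \|g\|$ passes the limit through the right-hand side to give $w(x) \le \ee^x\{\int_0^{\tau^*} f(X_s)\,ds + g(X_{\tau^*})\}$, matching the trivial reverse inequality via Remark~\ref{rem:integrable_tau}. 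The main obstacle is the third paragraph: the pathwise identification of the monotone limit $\tau_0$ with $\tau^*$ ties together lower semicontinuity of $w$, closedness of the level sets, quasi-left-continuity of the process, and the a priori integrability of $\tau_0$, all of which must be arranged in the right order.
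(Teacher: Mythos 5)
Your proposal follows the paper's own proof almost step for step: the same use of Lemma \ref{lem:bound_for_stop} and Lemma \ref{lem:bound_expectation} to get $\ee^x\{\tau_\ve\}\le M(x)$, the same identification of $\tau^*$ with $\lim_{\ve\downarrow 0}\tau_\ve$ via lower semicontinuity of $w-g$ and quasi-left-continuity, and the same passage to the limit in \eqref{eqn:undisc_bellman_ineq} against the $\tfrac1m$-optimal times $\sigma_m$. The structure and conclusions are correct.

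One sub-step in your last paragraph is, however, wrongly justified. From $\{\sigma_m<\tau^*\}\subseteq\{\sigma_m<\tau_\ve\}\cup\{\tau_\ve<\tau^*\}$ you get $\limsup_m\prob^x\{\sigma_m<\tau^*\}\le\prob^x\{\tau_\ve<\tau^*\}$, but $\tau_\ve\uparrow\tau^*$ does \emph{not} imply $\prob^x\{\tau_\ve<\tau^*\}\to 0$: the convergence may be strictly from below on a set of full measure (think $\tau_\ve=\tau^*-\ve$), in which case $\prob^x\{\tau_\ve<\tau^*\}=1$ for every $\ve$. In fact $\prob^x\{\sigma_m<\tau^*\}\to 0$ need not hold at all, since $\sigma_m$ may increase to $\tau^*$ strictly from below with positive probability. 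Fortunately this claim is not needed. What the dominated convergence argument actually requires is (a) $\sigma_m\wedge\tau^*\uparrow\tau^*$ a.s., which follows because $\prob^x\{\sigma_\infty<\tau_\ve\}\le\lim_m\prob^x\{\sigma_m<\tau_\ve\}=0$ for each $\ve$, hence $\sigma_\infty\ge\sup_\ve\tau_\ve=\tau^*$ a.s.; and (b) a.s. convergence of the terminal term $\ind{\sigma_m<\tau^*}g(X_{\sigma_m})+\ind{\sigma_m\ge\tau^*}g(X_{\tau^*})$ to $g(X_{\tau^*})$. For (b), on the event $\{\sigma_m<\tau^*\ \forall m\}$ one has $\sigma_m\uparrow\sigma_\infty=\tau^*$, so quasi-left-continuity gives $X_{\sigma_m}\to X_{\tau^*}$ and continuity of $g$ finishes the job; on the complement the term is eventually equal to $g(X_{\tau^*})$. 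With this repair your argument closes, and it is exactly the route the paper takes when it says the optimality of $\tau^*$ "follows in the same way as $\ve$-optimality of $\tau_\ve$".
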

\begin{proof}
By Lemma \ref{lem:bound_for_stop}, for every $m$ the stopping time $\sigma_m$ has a bounded expectation: $\ee^x \{ \sigma_m \} \le M(x)$, where $M(x)$ is defined in \eqref{eqn:Mx}. This bound does not depend on $m$. Combining this with conclusions of Lemma \ref{lem:undisc_tech} and Lemma \ref{lem:bound_expectation} (with $Y=\tau_\ve/M(x)$ and $X_n=\sigma_n/M(x)$) yields that $\ee^x \{ \tau_\ve \} \le M(x)$.

Apply \eqref{eqn:undisc_bellman_ineq} to $\sigma_m$ and $\tau_\ve$ and notice that $w(X_{\tau_\ve}) - \ve \le g(X_{\tau_\ve})$ due to the continuity of $g$, lower semicontinuity of $w$ and right-continuity of the process $(X_t)$:
\begin{align*}
w(x) - \frac1m
&\le \ee^{x} \Big \{ \int_0^{\sigma_m} f(X_s) ds + g(X_{\sigma_m}) \Big\}\notag\\
&\le \ee^{x} \Big\{ \int_0^{\sigma_m\wedge \tau_\ve} f(X_s) ds + \ind{\sigma_m < \tau_\ve} g(X_{\sigma_m}) + \ind{\sigma_m \ge \tau_\ve} w(X_{\tau_\ve}) \Big\}\\
&\le \ee^{x} \Big\{ \int_0^{\sigma_m\wedge \tau_\ve} f(X_s) ds + \ind{\sigma_m < \tau_\ve} g(X_{\sigma_m}) + \ind{\sigma_m \ge \tau_\ve} \big(g(X_{\tau_\ve}) + \ve \big)\Big\}.
\end{align*}
When $m\to \infty$, the dominated convergence theorem implies
\[
w(x) \le \ee^{x} \Big\{ \int_0^{\tau_\ve} f(X_s) ds + g(X_{\tau_\ve}) + \ve \Big\},
\]
so $\tau_\ve$ is $\ve$-optimal. Put $\tau_0 = \lim_{\ve \to 0} \tau_\ve$. Since this is an increasing sequence of random variables with the expectation bounded by $M(x)$ then $\tau_0$ is well defined and $\ee^x \{ \tau_0 \} \le M(x)$. For $\eta > 0$ and any $0 < \ve \le \eta$ we have $g(X_{\tau_\ve}) \ge w(X_{\tau_\ve}) - \eta$. By the quasi left-continuity of $(X_t)$ and lower semicontinuity of $w$, taking the limit $\ve \to 0$ and then $\eta \to 0$ yields
\[
g(X_{\tau_0}) \ge w(X_{\tau_0}).
\]
So $\tau_0 \ge \tau^*$. This means that $\tau^*$ is finite and $\ee^x \{ \tau^* \} \le M(x)$. Its optimality follows in the same way as $\ve$-optimality of $\tau_\ve$.
\end{proof}
The stopping time $\tau^*$ can be characterised as a limit of optimal stopping times for problems with a finite stopping horizon when the horizon increases to infinity.
\begin{lemma}
We have $\tau^* = \lim_{T \to \infty} \tau_T$, where $\tau_T$ is an optimal stopping time for the problem \eqref{eqn:w_T}.
\end{lemma}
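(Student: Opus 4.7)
Set $\tau_\infty := \lim_{T \to \infty} \tau_T$, a monotone limit which exists because the proof of Lemma \ref{lem:undisc_sigma_m} records that $(\tau_T)$ is non-decreasing in $T$. The plan is to prove $\tau_\infty = \tau^*$ by establishing two one-sided inequalities.

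For $\tau_\infty \le \tau^*$ I would give a direct pointwise argument. Because $w_{T-s} \le w$ pointwise (recall from Lemma \ref{lem:lsc_bounded_q} that $(w_T)$ is increasing in $T$ and converges to $w$), the stopping set $\{s \ge 0 : g(X_s) \ge w(X_s)\}$ of $\tau^*$ is contained, after intersecting with $[0,T]$, in the stopping set $\{s \in [0,T] : g(X_s) \ge w_{T-s}(X_s)\}$ of $\tau_T$. Comparing infima yields $\tau_T \le \tau^* \wedge T \le \tau^*$ for every $T$, and the monotone limit then gives $\tau_\infty \le \tau^*$.

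For the reverse inequality $\tau_\infty \ge \tau^*$ the pointwise comparison is unavailable, so I would reuse the probabilistic estimate already at hand. The stopping times $\sigma_m = \tau_{T(m)}$ constructed in Lemma \ref{lem:undisc_sigma_m} form a subsequence of the monotone family $(\tau_T)$, hence $\sigma_m \le \tau_\infty$ for each $m$; combining this with Lemma \ref{lem:undisc_tech} gives
\[
\prob^x\{\tau_\infty < \tau_\ve\} \le \prob^x\{\sigma_m < \tau_\ve\} \le \frac{1}{m\ve}
\]
for every $\ve > 0$ and every $m \ge 1$. Letting $m \to \infty$ yields $\tau_\infty \ge \tau_\ve$ $\prob^x$-a.s., and the proof of Theorem \ref{thm:optimal_time_bounded_q} already establishes that $\tau_0 := \lim_{\ve \downarrow 0} \tau_\ve$ satisfies $\tau_0 \ge \tau^*$. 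Passing $\ve \downarrow 0$ concludes $\tau_\infty \ge \tau^*$.

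The only nontrivial step is the reverse inequality $\tau_\infty \ge \tau^*$: a direct pointwise argument yields only the opposite $\tau_T \le \tau^*$, so the desired direction has to be extracted from the probabilistic bound of Lemma \ref{lem:undisc_tech} together with the identification of the limit $\tau_0$ furnished by the proof of Theorem \ref{thm:optimal_time_bounded_q}.
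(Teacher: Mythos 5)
Your proof is correct, and the interesting half of it — the inequality $\tau_\infty \ge \tau^*$ — takes a genuinely different route from the paper's. The paper argues pathwise: from $g(X_{\tau_T}) = w_{T-\tau_T}(X_{\tau_T})$ it passes to the limit using quasi left-continuity of $(X_t)$ on the left and the monotone approximation $w_{T-\tau_T} \ge w_d$ (for each fixed horizon $d$, eventually in $T$) on the right, concluding $g(X_{\tau_\infty}) \ge w(X_{\tau_\infty})$ and hence $\tau_\infty \ge \tau^*$ directly from the definition of $\tau^*$ as a hitting time. You instead recycle the probabilistic machinery already built for Theorem \ref{thm:optimal_time_bounded_q}: since $\sigma_m = \tau_{T(m)} \le \tau_\infty$, the inclusion $\{\tau_\infty < \tau_\ve\} \subseteq \{\sigma_m < \tau_\ve\}$ and Lemma \ref{lem:undisc_tech} give $\prob^x\{\tau_\infty < \tau_\ve\} \le \frac{1}{m\ve}$ for all $m$, whence $\tau_\infty \ge \tau_\ve$ $\prob^x$-a.s., and the identification $\lim_{\ve \downarrow 0}\tau_\ve \ge \tau^*$ from the proof of Theorem \ref{thm:optimal_time_bounded_q} finishes the job (restricting to a countable sequence $\ve_n \downarrow 0$ to preserve the a.s.\ statement, which is harmless by monotonicity of $\ve \mapsto \tau_\ve$). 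What your route buys is economy: you avoid redoing the quasi-left-continuity and $\liminf$ manipulations, which are the most delicate part of the paper's argument. What it costs is self-containedness: the conclusion is obtained only $\prob^x$-a.s.\ and leans on an intermediate fact ($\tau_0 \ge \tau^*$) that is established inside another proof rather than stated as a lemma, so quasi left-continuity is still being used, just once removed. Your first inequality $\tau_T \le \tau^* \wedge T \le \tau^*$ matches the paper's (which asserts it without the set-inclusion justification you supply; note your "comparing infima" step also implicitly uses $\tau_T \le T$, which holds because $w_0 = g$). Both proofs are valid.
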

\begin{proof}
Recall that $\tau_T = \inf \{ t \ge 0: g(X_t) \ge w_{T-t} (X_t) \}$. The sequence $(\tau_T)_{T \ge 0}$ of random variables is increasing  and each element has its expectation bounded by $M(x)$. Hence, $\tau_{\infty} = \lim_{T \to \infty} \tau_T$ exists.  Since $\tau_T \le \tau^*$, we conclude that $\tau_\infty \le \tau^*$. We also know that $g(X_{\tau_T}) = w_{T - \tau_T}(X_{\tau_T})$, so $\lim_{T \to \infty} g(X_{\tau_T}) = \lim_{T \to \infty} w_{T - \tau_T}(X_{\tau_T})$. By the quasi left-continuity of $(X_t)$ we have $\lim_{T \to \infty} g(X_{\tau_T}) = g(X_{\tau_\infty})$. For each $\omega \in \Omega$, any $T > \tau^*(\omega)$ and $0 < d < T - \tau_T(\omega)$ we have $w_{T-\tau_T}(X_{\tau_T})\ge w_d(X_{\tau_T})$ (dependence on $\omega$ is omitted here for clarity of notation).  Hence, for each positive $d$ we have $\liminf_{T\to \infty} w_{T-\tau_T}(X_{\tau_T})\ge  w_d(X_{\tau_\infty})$ $\prob^x$-a.s. and letting $d \to \infty$ we obtain  $\liminf_{T\to \infty} w_{T-\tau_T} (X_{\tau_T})\ge w(X_{\tau_\infty})$. Finally, $g(X_{\tau_\infty})\ge w(X_{\tau_\infty})$, $P^x$ - a.s., so $\tau_\infty \ge \tau^*$.
%
\end{proof}

Summarising, under the assumptions of this section we showed that the value function $w(x)$ is lower semi-continuous and the stopping time $\tau^*$, the first time the process enters the closed set $\{ x \in E:\ g(x) \ge w(x) \}$, has a finite expectation and is optimal.

\subsection{Zero-potential bounded from below}\label{subsec:undis_potential}
This and the following subsection provide sufficient conditions for \ref{ass:undis_large_dev}. Here, they are first expressed in terms of a zero-potential
\[
q(x) = \limsup_{T \to \infty} \ee^x \Big\{ \int_0^T \big(f(X_s) - \mu(f)\big) ds \Big\}.
\]
They are later derived from certain conditions related to the speed of convergence of transition probabilities of the process $(X_t)$ to the invariant measure $\mu$.

\begin{assumption}
\item[(C1)] $q(x)$ is continuous and bounded from below \label{ass:B1}
\item[(C2)] for any bounded stopping time $\sigma$
\[
q(x) = \ee^x \Big\{ \int_0^{\sigma} \big(f(X_s) - \mu(f) \big) ds + q(X_\sigma)\Big\}.
\] \label{ass:B2}
\item[(C3)] $\mu(f) < 0$ \label{ass:B3}
\end{assumption}
\begin{remark}
Assumption \ref{ass:B2} is equivalent to requesting that the process $Z_t = \int_0^t \big(f(X_s) - \mu(f) \big) ds + q(X_t)$ is a martingale.
\end{remark}

\begin{remark}
Under assumption \ref{ass:B2}, for any bounded stopping time $\tau$
\[
\ee^x \Big\{ \int_0^{\tau} f(X_s) ds + g(X_{\tau}) \Big\} = q(x) + \ee^x \{ \mu(f) \tau  + (g - q)(X_{\tau}) \}.
\]
The optimal stopping problem \eqref{eqn:stopping_problem} can be equivalently written as
\begin{equation}\label{eqn:w_hat}
\hat w(x) := w(x) - q(x) =  \sup_\tau \limsup_{T \to \infty} \ee^x \{  \mu(f) (\tau \wedge T) + (g - q)(X_{\tau \wedge T}) \}.
\end{equation}
This transforms the problem with a possibly unbounded integral term into a problem with an unbounded terminal reward and a strictly negative running cost.
\end{remark}
\begin{remark}
The limit $\limsup_{T \to \infty}$ can be omitted in the definition of $\hat w$ and $w$ and the set of stopping times can be restricted to those with a finite expectation. Indeed, if $\ee^x \{\tau \} = \infty$, then
\[
\limsup_{T \to \infty} \ee^x \{  \mu(f) (\tau \wedge T) + (g - q)(X_{\tau \wedge T}) \} \le \mu(f) \lim_{T \to \infty} \ee^x\{ \tau \wedge T \} + \Gamma = - \infty,
\]
where $\Gamma = \sup_{x} (g-q)(x) < \infty$ by assumption \ref{ass:B1}.
\end{remark}

The following lemma shows that the assumptions introduced at the beginning of this subsection are sufficient for \ref{ass:undis_large_dev}.
\begin{lemma}\label{lem:gamma_from_q}
Under \ref{ass:B1}-\ref{ass:B3}, assumption \ref{ass:undis_large_dev} is satisfied with any $d(x) \in (\mu(f), 0)$. Moreover, $\gamma(x) \le q(x) - A$, where $A = \inf_y q(y)$.
\end{lemma}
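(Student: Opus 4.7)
The plan is to exploit the martingale identity (C2) to rewrite the integral appearing in $\gamma(x)$ in terms of $q$, and then use the lower bound $A$ on $q$ together with the strict inequality $d(x) > \mu(f)$ to produce the desired finite bound.

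First I would fix an arbitrary stopping time $\tau$ and a horizon $T \ge 0$. Since $\tau \wedge T$ is bounded, assumption \ref{ass:B2} applied to $\sigma = \tau \wedge T$ yields
\[
\ee^x \Big\{ \int_0^{\tau \wedge T} \big(f(X_s) - \mu(f)\big) ds \Big\} = q(x) - \ee^x \{ q(X_{\tau \wedge T}) \}.
\]
Adding and subtracting $d(x)$ inside the integrand on the left-hand side and rearranging gives the key identity
\[
\ee^x \Big\{ \int_0^{\tau \wedge T} \big(f(X_s) - d(x)\big) ds \Big\}
= q(x) - \ee^x \{ q(X_{\tau \wedge T}) \} + \big(\mu(f) - d(x)\big)\, \ee^x\{\tau \wedge T\}.
\]

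Next I would use the two sign conditions. By \ref{ass:B1}, $q(X_{\tau \wedge T}) \ge A$, so $-\ee^x\{q(X_{\tau \wedge T})\} \le -A$. Since $d(x) \in (\mu(f), 0)$, the coefficient $\mu(f) - d(x)$ is strictly negative and $\ee^x\{\tau \wedge T\} \ge 0$, so the last term is non-positive. Consequently,
\[
\ee^x \Big\{ \int_0^{\tau \wedge T} \big(f(X_s) - d(x)\big) ds \Big\} \le q(x) - A.
\]
Taking $\limsup_{T \to \infty}$ and then the supremum over all stopping times $\tau$ yields $\gamma(x) \le q(x) - A$, which is finite by \ref{ass:B1}. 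This simultaneously verifies \ref{ass:undis_large_dev} and the explicit bound claimed in the lemma.

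There is no real obstacle here: the whole argument is a one-line consequence of the martingale identity (C2) once one notices that replacing $\mu(f)$ by any $d(x) > \mu(f)$ only makes the expression smaller (because the extra term $(\mu(f) - d(x))\ee^x\{\tau \wedge T\}$ is non-positive), and that the lower bound on $q$ controls the terminal term uniformly in $\tau$ and $T$.
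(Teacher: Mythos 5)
Your proof is correct and follows essentially the same route as the paper: decompose $f - d(x)$ as $(f - \mu(f)) + (\mu(f) - d(x))$, apply the martingale identity \ref{ass:B2} to the bounded stopping time $\tau \wedge T$, and discard the non-positive term $(\mu(f)-d(x))\,\ee^x\{\tau\wedge T\}$ while bounding $\ee^x\{q(X_{\tau\wedge T})\}$ below by $A$. No discrepancies to report.
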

\begin{proof}
Take any stopping time $\tau$ and $T > 0$. Then
\begin{align*}
&\ee^x \Big\{ \int_0^{\tau \wedge T} \big(f(X_s) - d(x) \big) ds \Big\}\\
&=
\ee^x \Big\{ \int_0^{\tau \wedge T} \big(f(X_s) - \mu(f) \big) ds + \big(\mu(f) - d(x) \big) (\tau \wedge T)\Big\}\\
&\le
\ee^x \Big\{ \int_0^{\tau \wedge T} \big(f(X_s) - \mu(f) \big) ds \Big\}\\
&=
q(x) - \ee^x\{ q(X_{\tau \wedge T}) \} \le q(x) - A,
\end{align*}
where $A$ is a lower bound for $q$ and in the first inequality we used $\mu(f) - d(x) < 0$. From the arbitrariness of $\tau$ and $T$ we conclude that $\gamma(x) \le q(x) - A < \infty$.
\end{proof}

The existence and continuity of the zero-potential is ensured by \ref{ass:speed} below, while the integrability required in \ref{ass:B2} follows from \ref{ass:integrability}.
\begin{assumption}
\item[(D1)] \label{ass:speed} there is $K: E \to (0, \infty)$ bounded on compacts and $h:[0, \infty) \to \er_+$ such that
\[
\| P_t(x, \cdot) - \mu(\cdot) \|_{TV} < K(x) h(t),
\]
and $\int_0^\infty h(t) dt < \infty$,
\item[(D2)] \label{ass:integrability} (integrability) $\ee^x \{ K(X_T) \} < \infty$ for any $x \in E$ and $T \ge 0$.
\end{assumption}
\begin{remark} Ergodicity assumption \ref{ass:speed} is commonly studied in the theory of stochastic processes (see, e.g., references in Section \ref{sec:example}):
\begin{enumerate}
\item (geometric ergodicity) there are functions $K, \lambda: E \to (0, \infty)$ bounded on compacts such that
\[
\| P_t(x, \cdot) - \mu(\cdot) \|_{TV} < K(x) e^{-\lambda(x) t},
\]
\item (polynomial ergodicity) there is a function $K: E \to (0, \infty)$ bounded on compacts and a constant $p > 1$ such that
\[
\| P_t(x, \cdot) - \mu(\cdot) \|_{TV} < \frac{K(x)}{(1 + t)^p}.
\]
\end{enumerate}
\end{remark}

\begin{lemma}\label{lem:integrability}
Assume \ref{ass:speed} and \ref{ass:integrability}. Then the zero-potential $q$ is finite-valued and continuous, and assumption \ref{ass:B2} is satisfied.
\end{lemma}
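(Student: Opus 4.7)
The approach is to represent $q$ as an absolutely convergent integral $\int_0^\infty (P_s f(x) - \mu(f))\,ds$, derive its continuity from the weak Feller property \ref{ass:weak_feller} and dominated convergence, and verify the martingale identity \ref{ass:B2} via the strong Markov property at the stopping time.

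First, by the standard characterisation of total variation, \ref{ass:speed} yields
\[
|P_s f(x) - \mu(f)| \le 2\|f\|_\infty \|P_s(x,\cdot) - \mu\|_{TV} \le 2\|f\|_\infty K(x) h(s), \qquad s \ge 0.
\]
Since $K(x) < \infty$ and $\int_0^\infty h(t)\,dt < \infty$, the $\limsup$ in the definition of $q$ is in fact an absolutely convergent limit and
\[
q(x) = \int_0^\infty (P_s f(x) - \mu(f))\,ds, \qquad |q(x)| \le 2\|f\|_\infty K(x) \int_0^\infty h(t)\,dt,
\]
which shows that $q$ is finite-valued. For continuity, \ref{ass:weak_feller} implies (exactly as used in the proof of Lemma \ref{lem:lsc_bounded_q}) that $P_s f(x_n) \to P_s f(x)$ for every $s \ge 0$ whenever $x_n \to x$. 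Such a sequence lies in a compact set $L$ on which $K$ is bounded (by the hypothesis in \ref{ass:speed}), so the integrand is dominated by the $s$-integrable function $2\|f\|_\infty \sup_{y\in L} K(y)\,h(s)$, and dominated convergence gives $q(x_n) \to q(x)$.

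Second, to verify \ref{ass:B2}, fix a bounded stopping time $\sigma \le T_0$ and set $\phi_T(x) = \ee^x \int_0^T (f(X_s) - \mu(f))\,ds$. Splitting the integral at $\sigma$ and applying the strong Markov property on $[\sigma, T]$ gives, for $T \ge T_0$,
\[
\phi_T(x) = \ee^x \int_0^\sigma (f(X_s) - \mu(f))\,ds + \ee^x \phi_{T-\sigma}(X_\sigma).
\]
Letting $T \to \infty$: the left-hand side tends to $q(x)$, the first term on the right is $T$-independent, and $\phi_{T-\sigma}(X_\sigma) \to q(X_\sigma)$ pointwise by construction. To identify $\lim_T \ee^x \phi_{T-\sigma}(X_\sigma)$ with $\ee^x q(X_\sigma)$ I would use the uniform envelope $|\phi_{T-\sigma}(X_\sigma)| \le 2\|f\|_\infty K(X_\sigma) \int_0^\infty h(t)\,dt$ together with dominated convergence; this requires $\ee^x K(X_\sigma) < \infty$, which is precisely the role of \ref{ass:integrability} combined with strong Markov at $\sigma$.

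The main obstacle I anticipate is this last transfer of integrability from deterministic times (which is all that \ref{ass:integrability} directly gives) to the bounded stopping time $\sigma$. In the absence of a Lyapunov/drift inequality for $K$, one must work with the strong Markov identity $\ee^x K(X_{T_0}) = \ee^x \ee^{X_\sigma}[K(X_{T_0 - \sigma})]$ and auxiliary bounds on $K$ (exploiting that $K$ is bounded on compacts by \ref{ass:speed}) to extract $\ee^x K(X_\sigma) < \infty$. Once this integrability is in hand, dominated convergence upgrades the pointwise limit to an expectation identity and \ref{ass:B2} follows; the remaining ingredients, namely the TV bound in \ref{ass:speed} and the weak Feller property \ref{ass:weak_feller}, handle the finiteness and continuity assertions routinely.
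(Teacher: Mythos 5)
The first half of your argument (finiteness of $q$ via the bound $|P_sf(x)-\mu(f)|\le 2\|f\|K(x)h(s)$, and continuity via weak Feller plus dominated convergence with $K$ bounded on the compact set containing the sequence) is exactly the paper's argument. The gap is in the second half, and it is the one you yourself flagged: you need to dominate $\phi_{T-\sigma}(X_\sigma)$ by an integrable random variable, and your envelope $2\|f\|_\infty K(X_\sigma)\int_0^\infty h$ requires $\ee^x\{K(X_\sigma)\}<\infty$, which \ref{ass:integrability} does not give. Your proposed repair does not work: the identity $\ee^x\{K(X_{T_0})\}=\ee^x\{\ee^{X_\sigma}[K(X_{T_0-\sigma})]\}$ only controls the \emph{averaged} quantity $\ee^{y}\{K(X_t)\}$, and there is no lower bound of the form $\ee^{y}\{K(X_t)\}\ge cK(y)$ available here (for a mixing process $\ee^y\{K(X_t)\}$ can stay near $\mu(K)$ even where $K(y)$ is huge), so finiteness of the left-hand side tells you nothing about $\ee^x\{K(X_\sigma)\}$. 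Without an additional Lyapunov-type hypothesis this route is a dead end.

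The paper's resolution is to never dominate by a function of $X_\sigma$ at all. Writing the tail as a conditional expectation and splitting at the \emph{deterministic} bound $T_0\ge\sigma$,
\[
\Big|\ee^x\Big\{\int_\sigma^{T}\hat f(X_s)\,ds\,\Big|\,\ef_\sigma\Big\}\Big|
\le T_0\|\hat f\| + \Big|\ee^x\big\{\phi_{T-T_0}(X_{T_0})\,\big|\,\ef_\sigma\big\}\Big|
\le T_0\|\hat f\| + \ee^x\big\{\bar q(X_{T_0})\,\big|\,\ef_\sigma\big\},
\]
where $\bar q(x)=\int_0^\infty|P_sf(x)-\mu(f)|\,ds\le CK(x)$. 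The dominating random variable on the right is independent of $T$ and has finite expectation $T_0\|\hat f\|+\ee^x\{\bar q(X_{T_0})\}$ precisely because \ref{ass:integrability} is applied at the deterministic time $T_0$. Dominated convergence then lets $T\to\infty$ and yields \ref{ass:B2}. (The paper carries this out for the resolvents $q_\alpha(x)=\ee^x\{\int_0^\infty e^{-\alpha s}\hat f(X_s)ds\}$ and lets $\alpha\downarrow 0$, but the splitting-at-$T_0$ device is the essential missing idea; your direct passage $T\to\infty$ in $\phi_T$ would also go through once you adopt it.)
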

\begin{proof}
Define
\[
\bar q(x) = \Big\{ \int_0^\infty \big|P_s f(x) - \mu(f)\big| ds \Big\}.
\]
This function is finite-valued by assumption \ref{ass:speed}. This means that $q(x)$ is finite valued and continuous by the weak Feller property (and dominated convergence theorem). By \ref{ass:integrability}, for any $T > 0$ and $x \in E$, the random variable $\bar q(X_T)$ is $\prob^x$-integrable.

Denote $\hat f(x) = f(x) - \mu(f)$. For $\alpha > 0$ and $T > 0$, due to the boundedness of $\hat f$ and Markov property of $(X_t)$ we have
\begin{equation}\label{eqn:discounted_eqn}
\ee^x \Big\{ \int_0^T e^{-\alpha s} \hat f(X_s) ds \Big\} = q_\alpha(x) - e^{-\alpha T} \ee^x \{ q_\alpha(X_T) \},
\end{equation}
where
\[
q_\alpha(x) = \ee^x \Big\{ \int_0^\infty e^{-\alpha s} \hat f(X_s) ds \Big\}.
\]
Clearly, $|q_\alpha(x)| \le \bar q(x)$, so $|q_\alpha(X_T)|$ is dominated by $\bar q(X_T)$. The integrability of the latter and the dominated convergence theorem yield that when $\alpha \to 0$ the equation \eqref{eqn:discounted_eqn} converges to
\[
\ee^x \Big\{ \int_0^T \hat f(X_s) ds \Big\} = q(x) - \ee^x \{ q(X_T) \}.
\]

Take now a stopping time $\sigma$ bounded by $T$. Then
\begin{align*}
|q_\alpha(X_\sigma)|
&= \Big| \ee^x \Big\{ \int_\sigma^\infty e^{-\alpha (s-\sigma)} \hat f(X_s) ds \Big| \ef_\sigma \Big\} \Big|\\
&\le \Big| \ee^x \Big\{ \int_\sigma^T e^{-\alpha (s-\sigma)} \hat f(X_s) ds \Big| \ef_\sigma \Big\} \Big|
+ \Big| e^{-\alpha(T-\sigma)} \ee^x \Big\{ \int_T^\infty e^{-\alpha (s-T)} \hat f(X_s) ds \Big| \ef_\sigma \Big\} \Big|\\
&\le T \|\hat f\| + \Big| \ee^x \Big\{ e^{-\alpha(T-\sigma)} \ee^x \big\{ \int_T^\infty e^{-\alpha (s-T)} \hat f(X_s) ds \big| \ef_T \big\} \Big| \ef_\sigma \Big\} \Big|\\
&= T \|\hat f\| + e^{-\alpha(T-\sigma)} \Big| \ee^x \big\{  q_\alpha(X_T) \big| \ef_\sigma \big\} \Big|
\le T \|\hat f\| + \Big| \ee^x \big\{  \bar q(X_T) \big| \ef_\sigma \big\} \Big|,
\end{align*}
which gives a bound for $|q_\alpha (X_\sigma)|$ independent of $\alpha$. Strong Markov property gives
\begin{equation}\label{eqn:q_alpha}
\ee^x \Big\{ \int_0^\sigma e^{-\alpha s} \hat f(X_s) ds \Big\} = q_\alpha(x) -  \ee^x \{ e^{-\alpha \sigma} q_\alpha(X_\sigma) \}.
\end{equation}
Using the bound obtained above, we can take the limit $\alpha \to 0$ on both sides and conclude the proof.
\end{proof}

To ensure that $q$ is bounded from below, we have to assume something about the function $f$. Knowing that under ergodicity, the process spends most of the time in a ball with an appropriately big radius, we want to allow for flexibility of the function $f$ there. However, to give us control of the integral the effect of $f$ should be under control outside of the ball. The following lemma shows that it is sufficient to assume that $f > \mu(f)$ outside of a compact set.

\begin{lemma}\label{lem:f_C0}
Assume \ref{ass:speed}-\ref{ass:integrability}. If $\mu(f) < 0$ and the set $L = \{ x \in E:\  f(x) \le \mu(f) \}$ is compact, then $q$ is bounded from below.
\end{lemma}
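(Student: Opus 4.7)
The plan is to derive a uniform bound $|q|\le C_1$ on the compact set $L$ from the ergodicity rate in \ref{ass:speed}, and then propagate this as a lower bound to $x\notin L$ via a strong Markov argument at the hitting time of $L$, exploiting that $\hat f:=f-\mu(f)$ is strictly positive on $L^c$ (by the very definition of $L$) and that $L$ is closed (since $f$ is continuous). To bypass a genuine obstacle associated with the event $\{\sigma=\infty\}$, I would work with the discounted resolvent $q_\alpha(x)=\ee^x\big\{\int_0^\infty e^{-\alpha s}\hat f(X_s)\,ds\big\}$ already introduced in the proof of Lemma \ref{lem:integrability} and pass to the limit $\alpha\to 0$ only at the end.

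From the proof of Lemma \ref{lem:integrability} one has the absolutely convergent representation $q(y)=\int_0^\infty(P_sf(y)-\mu(f))\,ds$ together with the estimate $|P_sf(y)-\mu(f)|\le C\|f\|K(y)h(s)$ (by the duality between the total variation norm and bounded functions applied to \ref{ass:speed}). Integrating in $s$, using $\int_0^\infty h(s)\,ds<\infty$ and the boundedness of $K$ on the compact set $L$, gives a finite constant $C_1$ with $|q(y)|\le C_1$ for $y\in L$; the same computation, with $e^{-\alpha s}\le 1$, yields $|q_\alpha(y)|\le C_1$ for $y\in L$ \emph{uniformly} in $\alpha>0$. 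Now for $x\notin L$, set $\sigma=\inf\{t\ge 0:X_t\in L\}$; by right-continuity of $(X_t)$ one has $X_\sigma\in L$ on $\{\sigma<\infty\}$, and $\hat f(X_s)>0$ for all $s<\sigma$. The strong Markov property then yields
\[
q_\alpha(x) = \ee^x\Big\{\int_0^\sigma e^{-\alpha s}\hat f(X_s)\,ds\Big\} + \ee^x\{\ind{\sigma<\infty}e^{-\alpha\sigma}q_\alpha(X_\sigma)\} \ge -C_1,
\]
the first summand being non-negative and the second bounded below by $-C_1\,\ee^x\{\ind{\sigma<\infty}e^{-\alpha\sigma}\}\ge -C_1$. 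For $x\in L$ the bound $q_\alpha(x)\ge -C_1$ is immediate. Sending $\alpha\to 0$ and invoking the pointwise convergence $q_\alpha(x)\to q(x)$ established within Lemma \ref{lem:integrability} (dominated convergence against the integrable majorant $C\|f\|K(x)h(\cdot)$) delivers $q(x)\ge -C_1$ on all of $E$.

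The main obstacle is the event $\{\sigma=\infty\}$, which is precisely why I avoid applying the martingale identity $q(x)=\ee^x\big\{\int_0^{\sigma\wedge T}\hat f(X_s)\,ds\big\}+\ee^x\{q(X_{\sigma\wedge T})\}$ of \ref{ass:B2} and sending $T\to\infty$ directly. That route would require $\ee^x\{q(X_T)\ind{\sigma>T}\}\to 0$; however \ref{ass:integrability} only provides $\ee^x\{K(X_T)\}<\infty$ for each fixed $T$ (with no uniform tightness in $T$), and weak ergodicity alone supplies no quantitative decay rate for $\prob^x\{\sigma>T\}$. Multiplying the integrand by $e^{-\alpha s}$ suppresses the $\{\sigma=\infty\}$ contribution automatically (there $e^{-\alpha\sigma}=0$), so the whole estimate boils down to the uniform-in-$\alpha$ bound on $|q_\alpha|$ restricted to the compact set $L$.
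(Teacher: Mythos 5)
Your proof is correct and follows the same skeleton as the paper's: both work with the resolvent $q_\alpha$, apply the strong Markov property at the hitting time of a compact set off which $f-\mu(f)>0$, drop the resulting non-negative integral term, control $q_\alpha$ on that compact set via \ref{ass:speed}, and let $\alpha\downarrow 0$ at the end. You diverge in two places, and both divergences are genuine simplifications that hold up. First, the paper hits an enlarged ball $\tl B$ chosen to contain both $L$ and a ball of positive $\mu$-measure, precisely so that the hitting time is a.s.\ finite; you hit $L$ itself and observe --- correctly --- that finiteness is unnecessary because $e^{-\alpha\sigma}$ vanishes on $\{\sigma=\infty\}$, so the identity $q_\alpha(x)=\ee^x\{\int_0^\sigma e^{-\alpha s}\hat f(X_s)\,ds\}+\ee^x\{\ind{\sigma<\infty}e^{-\alpha\sigma}q_\alpha(X_\sigma)\}$ needs no qualification and the auxiliary ball (and the law-of-large-numbers argument for finiteness of its hitting time) can be dispensed with. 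Second, the paper passes to the limit $\alpha\downarrow 0$ inside the expectation $\ee^x\{e^{-\alpha\tau_{\tl B}}q_\alpha(X_{\tau_{\tl B}})\}$, which forces it to establish uniform convergence of $q_\alpha$ to $q$ on compacts; you instead bound the right-hand side by the $\alpha$-independent constant $-C_1$ \emph{before} taking the limit, so only the pointwise convergence $q_\alpha(x)\to q(x)$ (dominated convergence against the majorant $\|f\|K(x)h(\cdot)$) is required. The one small point to keep explicit is that $X_\sigma\in L$ on $\{\sigma<\infty\}$ because $L$ is closed ($f$ is continuous) and the paths are right-continuous; you note this, and it is exactly what licenses the bound $|q_\alpha(X_\sigma)|\le C_1$.
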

\begin{proof}
Let $B$ be a closed ball such that $\mu(B) > 0$. It is easy to see (for example, by the law of large numbers for ergodic processes) that $\tau_B < \infty$ $\prob^x$-a.s. for any $x \in E$, where $\tau_B = \inf\{t \ge 0:\ X_t \in B \}$. Let $\tl B$ be the smallest closed ball containing $B$ and $L$. Then $\tau_{\tl B} < \infty$ $\prob^x$-a.s. for $x \in E$. We have $f(x) - \mu(f) > 0$ for $x \in \tl B^c$. For any $T>0$, equation \eqref{eqn:q_alpha} reads
\begin{align*}
q_\alpha(x)
&= \ee^x \Big\{ \int_0^{\tau_{\tl B} \wedge T} e^{-\alpha s} \big( f(X_s) - \mu(f)\big) ds \Big\} + \ee^x \{ e^{-\alpha (\tau_{\tl B} \wedge T)} q_\alpha(X_{\tau_{\tl B} \wedge T}) \}\\
&\ge \ee^x \{ e^{-\alpha (\tau_{\tl B} \wedge T)}q_\alpha(X_{\tau_{\tl B} \wedge T}) \}.
\end{align*}
By the dominated convergence theorem and finiteness of $\tau_{\tl B}$ we obtain
\begin{equation}\label{eqn:q_A_ineq}
q_\alpha(x) \ge \ee^x \{ e^{-\alpha \tau_{\tl B}} q_\alpha(X_{\tau_{\tl B}}) \}.
\end{equation}
Considerations in the proof of Lemma \ref{lem:integrability} imply that $q_\alpha(x)$ converges to $q(x)$ when $\alpha \downarrow 0$, but they do not apply to the right-hand side of the above inequality since $\tau_{\tl B}$ does not have to be bounded. However, $X_{\tau_{\tl B}} \in \tl B$ by the right-continuity of $(X_t)$, so it is sufficient to prove uniform convergence of $q_\alpha$ to $q$ on compact sets. We have the following estimate:
\begin{align*}
|q(x) - q_\alpha(x)|
&\le \int_0^\infty \big(1 - e^{-\alpha s}) | \ee^x \{ f(X_s) - \mu(f) \} ds\\
&\le \int_0^\infty \big(1 - e^{-\alpha s}) \|f\|  \| P_t(x, \cdot) - \mu(\cdot) \|_{TV}ds\\
&\le \|f\| K(x) \int_0^\infty \big(1 - e^{-\alpha s})  h(s) ds.
\end{align*}
By the dominated convergence theorem, this proves uniform convergence of $q_\alpha$ to $q$ as $\alpha \downarrow 0$ on compact sets since $K$ is bounded on compacts. Again, by the dominated convergence theorem, \eqref{eqn:q_A_ineq} yields
\[
q(x) \ge \ee^x \{ q(X_{\tau_{\tl B}}) \}.
\]
The function $q$ is continuous hence bounded on $\tl B$, so $q(x) \ge \inf_{y \in \tl B} q(y) > -\infty$.
\end{proof}
\begin{remark}
It is easy to see that the set $L$ from the above lemma is compact when $f \in \mathcal{C}_0$. This compactness property is also true for any continuous function $f$ with $\mu(f) < 0$ and bounded from below by a $\mathcal{C}_0$ function $\phi$. Indeed, $L \subset \{ x \in E:\  \phi(x) \le \mu(f) \} = L_\phi$ and the set $L_\phi$ is compact since $\mu(f) < 0$. Examples include functions $f$ that are positive outside of a sufficiently large ball but with negative values in a set of a large $\mu$-measure.
\end{remark}

It is not clear yet that $q$ can be unbounded from above and bounded from below. The lemma below shows that this is natural for a large class of processes.

\begin{lemma}
Assume \ref{ass:speed}-\ref{ass:integrability}. If $\mu(f) < 0$ and there is $\delta > 0$ such that the set $L_\delta = \{ x \in E:\  f(x) \le \mu(f) + \delta\}$ is compact, then $\lim_{\|x\| \to \infty} q(x) = \infty$ if and only if $\lim_{\|x\| \to \infty} \ee^x \{ \tau_{B_n} \} = \infty$ for all $n$, where $B_n = \{ x \in E:\ \|x\| \le n \}.$
\end{lemma}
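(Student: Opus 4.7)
The plan is to squeeze $q(x)$ between two affine functions of $\ee^x\{\tau_{B_n}\}$ using a strong Markov identity
\begin{equation*}
q(x) \,=\, \ee^x\Big\{\int_0^{\tau_{B_n}}\hat f(X_s)\,ds \,+\, q(X_{\tau_{B_n}})\Big\}, \qquad \hat f := f-\mu(f),
\end{equation*}
valid whenever $\ee^x\{\tau_{B_n}\}<\infty$ and $n$ is chosen so large that $B_n\supset L_\delta$ and $\mu(B_n)>0$; ergodicity then forces $\tau_{B_n}<\infty$ $\prob^x$-a.s.\ for every $x$. To establish this identity I will repeat the $\alpha\downarrow 0$ scheme used in the proofs of Lemmas~\ref{lem:integrability} and~\ref{lem:f_C0}: start from \eqref{eqn:q_alpha} with $\sigma=\tau_{B_n}\wedge T$, first let $T\to\infty$ exploiting the bound $\|q_\alpha\|\le\|\hat f\|/\alpha$, then let $\alpha\downarrow 0$ via dominated convergence---on the integral side through the $\prob^x$-integrable majorant $\|\hat f\|\tau_{B_n}$, and on the boundary term through the uniform convergence $q_\alpha\to q$ on the compact $\bar B_n$ (shown inside the proof of Lemma~\ref{lem:f_C0}) together with the bound $|q_\alpha|\le C K$ with $K$ bounded on compacts.

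For the ``if'' direction I would not even need the full identity. Assuming $\ee^x\{\tau_{B_n}\}\to\infty$ as $\|x\|\to\infty$ for every $n$, fix any $n_0$ with $B_{n_0}\supset L_\delta$ and apply \ref{ass:B2} with $\sigma=\tau_{B_{n_0}}\wedge T$. Since $X_s\notin L_\delta$ for $s<\tau_{B_{n_0}}$, $\hat f(X_s)\ge\delta$ on that range, while $q(X_{\tau_{B_{n_0}}\wedge T})\ge A:=\inf_y q(y)>-\infty$ by Lemma~\ref{lem:f_C0}. Thus $q(x)\ge\delta\,\ee^x\{\tau_{B_{n_0}}\wedge T\}+A$, and monotone convergence in $T$ yields $q(x)\ge\delta\,\ee^x\{\tau_{B_{n_0}}\}+A\to\infty$.

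For the ``only if'' direction I will argue by contradiction. Suppose $q(x)\to\infty$; if for some $n\ge n_0$ there were a sequence $\|x_k\|\to\infty$ along which $\ee^{x_k}\{\tau_{B_n}\}$ stays bounded, the extended identity would give $q(x_k)\le\|\hat f\|\,\ee^{x_k}\{\tau_{B_n}\}+\sup_{y\in\bar B_n}q(y)$, a bounded quantity (the supremum is finite by continuity of $q$ on the compact $\bar B_n$), contradicting $q(x_k)\to\infty$. Hence $\ee^x\{\tau_{B_n}\}\to\infty$ for every $n\ge n_0$; for $n<n_0$ the inclusion $B_n\subset B_{n_0}$ gives $\tau_{B_n}\ge\tau_{B_{n_0}}$ $\prob^x$-a.s., extending the conclusion to all $n$.

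The main obstacle is the extension step: pushing the martingale identity \ref{ass:B2} from bounded stopping times to the potentially unbounded $\tau_{B_n}$. Everything that follows is a clean two-sided squeeze, with the sign of $\hat f$ outside $L_\delta$ driving the lower bound and the trivial bound $\hat f\le\|\hat f\|$ driving the upper bound.
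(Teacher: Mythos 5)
Your proposal is correct and follows essentially the same route as the paper: both directions squeeze $q(x)$ between affine functions of $\ee^x\{\tau_{B_n}\}$ via the strong Markov identity for the potential, using the sign of $f-\mu(f)$ outside $L_\delta$ for the lower bound, the uniform convergence of $q_\alpha$ to $q$ on compacts (from the proof of Lemma \ref{lem:f_C0}) for the boundary term, and the bounds $\inf_y q(y)>-\infty$ and $\sup_{B_n}q<\infty$. The only cosmetic differences are that the paper runs the vanishing-discount limit $\alpha\downarrow 0$ in both directions (obtaining the inequalities directly, without restricting to $\ee^x\{\tau_{B_n}\}<\infty$ or arguing by contradiction), whereas you handle the lower bound via \ref{ass:B2} with the truncated stopping time $\tau_{B_{n_0}}\wedge T$ and monotone convergence.
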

\begin{proof}
Assume $\lim_{\|x\| \to \infty} \ee^x \{ \tau_{B_n} \} = \infty$ for all $n$. Let $N$ be such that $L_\delta \subset B_N$. As in the proof of Lemma \ref{lem:f_C0}, for any $\alpha > 0$
\begin{align*}
q_\alpha(x)
&= \ee^x \Big\{ \int_0^{\tau_{B_N}} e^{-\alpha s} \big( f(X_s) - \mu(f)\big) ds \Big\} + \ee^x \{ e^{-\alpha \tau_{B_N}} q_\alpha(X_{\tau_{B_N}}) \}\\
&> \delta \ee^x \{ \int_0^{\tau_{B_N}} e^{-\alpha s} ds\} + \ee^x \{ e^{-\alpha \tau_{B_N}} q_\alpha(X_{\tau_{B_N}}) \}.
\end{align*}
By the monotone convergence theorem $\lim_{\alpha \downarrow 0} \ee^x \{ \int_0^{\tau_{B_N}} e^{-\alpha s} ds\} = \ee^x \{ \tau_{B_N} \}$. In the proof of Lemma \ref{lem:f_C0} we have shown that $q_\alpha$ converges to $q$ uniformly on compact sets. Hence,
\[
q (x) \ge \delta \ee^x \{ \tau_{B_N} \} + \inf_{y \in B_N} q(y)
\]
and the proof is complete.

Assume now that $\lim_{\|x\| \to \infty} q(x) = \infty$ and take any $N > 0$. Then for $x \notin B_N$
\[
q_\alpha(x)
\le 2\|f\| \ee^x \bigg\{ \int_0^{\tau_{B_N}} e^{-\alpha s} ds\bigg\} + \ee^x \Big\{ e^{-\alpha \tau_{B_N}} q_\alpha(X_{\tau_{B_N}}) \Big\},
\]
which, when $\alpha \downarrow 0$, yields
\[
q (x) \le 2\|f\| \ee^x \{ \tau_{B_N} \} + \sup_{y \in B_N} q(y).
\]
Hence, $\lim_{\|x\| \to \infty} \ee^x \{ \tau_{B_N} \} = \infty$.
\end{proof}
Notice that the compactness of $L_\delta$ is not required in the proof of the left implication.

The above lemma suggests another criterion for the boundedness of $q$ from below. The proof is omitted.
\begin{lemma}
If $q$ is continuous and $q(x) \to \infty$ when $\|x\| \to \infty$, then $q$ is bounded from below.
\end{lemma}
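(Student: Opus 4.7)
The statement is a routine compactness argument; the only subtlety is that the ambient space is a metric space $(E,\rho)$, not a normed space, so I first need to interpret $\|x\|\to\infty$ as $\rho(x,x_0)\to\infty$ for some fixed reference point $x_0\in E$ (equivalently, leaving every compact set).

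The plan is as follows. First, using the hypothesis $q(x)\to\infty$ as $\|x\|\to\infty$, I would fix some convenient threshold, say $C=0$, and extract $R>0$ such that $q(x)\ge 0$ for every $x$ with $\rho(x,x_0)>R$. Second, the closed ball $\overline{B}(x_0,R)=\{x\in E:\rho(x,x_0)\le R\}$ is compact by the standing assumption on the metric $\rho$, and $q$ is continuous on $E$ by hypothesis, so $q$ attains its minimum on $\overline{B}(x_0,R)$ at some point, giving a finite value $m:=\min_{y\in \overline{B}(x_0,R)}q(y)\in\er$. Combining the two cases yields $q(x)\ge \min(0,m)>-\infty$ for every $x\in E$, which is the required lower bound.

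There is essentially no obstacle here: the whole argument is the standard fact that a continuous function on a locally compact space which is coercive (tends to $+\infty$ at infinity) is bounded below, and every tool used (continuity of $q$, local compactness via compact closed balls) is already in place from the standing assumptions. The only thing worth flagging, and the reason the authors omit the proof, is simply to make the identification of ``$\|x\|\to\infty$'' with ``$x$ leaves every compact set'' explicit, after which the two-line compactness argument above finishes the job.
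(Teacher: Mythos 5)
Your proof is correct and is precisely the standard compactness argument the authors had in mind when they wrote ``the proof is omitted'': coercivity gives $q\ge 0$ outside a closed ball, which is compact by the standing assumption on the metric $\rho$, and continuity gives a finite minimum there. Nothing further is needed.
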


\subsection{Large deviations type condition}\label{subsec:large_dev}

The second criterion for \ref{ass:undis_large_dev} is provided by an upper bound for large deviations of the empirical process:
\begin{assumption}
\item[(L)] For any $\delta, \ve>0$ and a compact set $K \subset E$ there is $N>0$ and $p>0$ such that for $n\ge N$ and $x \in K$ we have
\[
\prob^x\Big\{\Big|\frac1{n \delta} \int_0^{n \delta} f(X_s)ds - \mu(f)\Big|> \ve \Big\} \le e^{-p (n \delta)}.
\]\label{ass:L}
\end{assumption}

There are two important classes of Markov processes that satisfy assumption \ref{ass:L}:
\begin{itemize}
\item weakly Feller processes over a compact state space with a unique invariant measure (see \cite[Theorem 3]{DPDS} and also \cite{DV1, DV2}),
\item weakly Feller processes on a locally compact separable matric space with a unique invariant measure for which there exists a Lyapunov function, i.e., a function $u\ge 1$ such that for each positive $m$ the set $\{x \in E:\ \frac{u(x)}{P_\delta u(x)} \le m\}$ is compact (for details see \cite[Theorem 3]{DPDS} and Lemmas 4.1, 4.3 and Theorem 4.4 of \cite{DV2} and also \cite{DV1} and \cite{Liptser1997}).
\end{itemize}
Note that the first of the above condition means that if the state space $E$ is compact then our standing assumptions \ref{ass:weak_feller} and \ref{ass:ergodic} are sufficient for \ref{ass:L} to hold.

\begin{lemma}\label{lem:supremum_large_dev}
Assume \ref{ass:L}. For any $\ve > 0$ and a compact set $K \subset E$ there is $S>0$, $C>0$ and $\rho >0$ such that for $t\ge S$ and $x \in K$ we have
\[
\prob^x\Big\{ \sup_{s\geq t} \Big|\frac1s \int_0^s f(X_u)du-\mu(f)\Big| >\ve \Big\}\le Ce^{-\rho t}.
\]
\end{lemma}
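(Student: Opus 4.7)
The plan is to reduce the supremum over continuous time to a countable union of events at discrete times $n\delta$, where \ref{ass:L} yields an exponential bound; a geometric sum then produces the required $e^{-\rho t}$ decay.

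First I would fix a convenient time-step $\delta>0$ (any value will do) and, for each $s \ge t$, set $n = \lfloor s/\delta \rfloor$ so that $n\delta \le s < (n+1)\delta$. A direct algebraic decomposition gives
\begin{align*}
\frac{1}{s}\int_0^s f(X_u)\,du - \mu(f)
&= \frac{n\delta}{s}\Bigl(\frac{1}{n\delta}\int_0^{n\delta} f(X_u)\,du - \mu(f)\Bigr)\\
&\quad +\Bigl(\frac{n\delta}{s}-1\Bigr)\mu(f) + \frac{1}{s}\int_{n\delta}^{s} f(X_u)\,du,
\end{align*}
and the last two correction terms are bounded in absolute value by $\delta(\|f\|+|\mu(f)|)/s$. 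Hence, provided $s \ge S_0 := 2\delta(\|f\|+|\mu(f)|)/\ve$, any excursion $|\tfrac{1}{s}\int_0^s f(X_u)du - \mu(f)|>\ve$ forces $|\tfrac{1}{n\delta}\int_0^{n\delta} f(X_u)du - \mu(f)|>\ve/2$.

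Next I would invoke \ref{ass:L} with threshold $\ve/2$ and the compact set $K$ to obtain $N\in\en$ and $p>0$ such that, for $n\ge N$ and $x\in K$,
$$
\prob^x\Bigl\{\Bigl|\frac{1}{n\delta}\int_0^{n\delta} f(X_u)\,du - \mu(f)\Bigr| > \ve/2\Bigr\} \le e^{-pn\delta}.
$$
Set $S := \max(S_0,(N+1)\delta)$. For $t\ge S$ the event $\{\sup_{s\ge t}|\cdots|>\ve\}$ is contained in the countable union $\bigcup_{n\ge \lfloor t/\delta\rfloor}\{|\tfrac{1}{n\delta}\int_0^{n\delta} f(X_u)du-\mu(f)|>\ve/2\}$, and a union bound combined with the geometric sum yields
$$
\prob^x\Bigl\{\sup_{s\ge t}\Bigl|\frac{1}{s}\int_0^s f(X_u)\,du - \mu(f)\Bigr| > \ve\Bigr\} \le \sum_{n=\lfloor t/\delta\rfloor}^{\infty} e^{-pn\delta} \le \frac{e^{p\delta}}{1-e^{-p\delta}}\,e^{-pt},
$$
so the statement holds with $\rho=p$ and $C=e^{p\delta}/(1-e^{-p\delta})$.

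There is no serious obstacle; the argument is a routine continuous-to-discrete reduction. The only subtle point is calibrating $S_0$ so that the edge corrections from comparing the interval $[0,s]$ to the grid interval $[0,n\delta]$ are uniformly absorbed into the $\ve/2$ slack for all $s\ge t$, which is precisely why the threshold fed to \ref{ass:L} must be strictly smaller than $\ve$.
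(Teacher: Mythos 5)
Your proof is correct and follows essentially the same route as the paper: both reduce the continuous-time supremum to the grid points $n\delta$ by absorbing the edge correction into an $\ve/2$ slack, then apply \ref{ass:L} at level $\ve/2$ with a union bound and a geometric sum. The only cosmetic difference is that the paper fixes $\delta=\ve/(2\|f-\mu(f)\|)$ up front while you keep $\delta$ arbitrary and push the calibration into the threshold $S_0$; the resulting constants are the same up to relabelling.
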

\begin{proof}
Fix a compact set $K \subset E$ and put $\delta = \ve / (2 \|f - \mu(f)\|)$. Take $N$ and $p$ from (assumption \ref{ass:L}) such that for $n \ge N$ and $x \in K$ we have
\[
\prob^x\Big\{\Big|\frac1{n \delta} \int_0^{n \delta} f(X_s)ds - \mu(f)\Big|> \ve/2 \Big\} \le e^{-p (n \delta)}.
\]
The boundedness of $f$ and the choice of $\delta$ imply that for $n \ge N$
\[
\Big\{ \sup_{s\ge {n \delta}} \Big|\frac1s \int_0^s f(X_u)du-\mu(f)\Big| >\ve \Big\} \subset
\Big\{ \sup_{i \in \mathbb{N}} \Big|\frac1{(n + i) \delta} \int_0^{(n + i) \delta} f(X_u)du-\mu(f)\Big| >\ve/2 \Big\},
\]
where $\mathbb{N}$ denotes the set of non-negative integers. The probability of the event on the right-hand side is bounded from above by
\[
\sum_{i=0}^\infty \prob^x \Big\{\Big|\frac1{(n + i) \delta} \int_0^{(n + i) \delta} f(X_s)ds - \mu(f)\Big|>\ve / 2 \Big\}
\le
\sum_{i=0}^\infty e^{-p ((n + i) \delta)} = e^{-p (n \delta)} \frac{1}{1 - e^{-p\delta}}.
\]
This proves the statement of the lemma for $t = n \delta$, $n \ge N$, and $\rho = p$. Extending it to any $t \ge N\delta=: S$ is straightforward due to the boundedness of $f$.
\end{proof}
Notice that $\rho$ in the above lemma corresponds to $p$ in assumption \ref{ass:L} for $\ve/2$.

\begin{lemma}\label{lem:ldev_bound}
Assumption (L) and $\mu(f) < 0$ imply \ref{ass:undis_large_dev}.
\end{lemma}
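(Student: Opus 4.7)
\medskip

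\textbf{Proof proposal.} The plan is to pick a constant $d \in (\mu(f), 0)$ -- for concreteness $d = \mu(f)/2$, which is admissible since $\mu(f)<0$ -- and verify \ref{ass:undis_large_dev} with this constant choice of $d(x)$. Writing $\beta := d - \mu(f) = -\mu(f)/2 > 0$, for any stopping time $\sigma$ we have the pathwise identity
\[
\int_0^{\sigma}\bigl(f(X_s)-d\bigr)\,ds \;=\; \int_0^{\sigma}\bigl(f(X_s)-\mu(f)\bigr)\,ds \;-\; \beta\,\sigma \;=:\; I(\sigma).
\]
The ergodic-average term is sublinear in $\sigma$ with high probability, while the $-\beta\sigma$ drift kills any linear growth; the aim is to turn this heuristic into a uniform bound over $\sigma = \tau\wedge T$.

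The key input is Lemma \ref{lem:supremum_large_dev} applied with the compact set $K=\{x\}$ and with $\ve = \beta/2$. It yields constants $S,C,\rho>0$ (depending on $x$) such that
\[
\prob^x\Bigl\{\,\sup_{s\ge t}\bigl|\!\!\int_0^s (f(X_u)-\mu(f))\,du\bigr|>\tfrac{\beta}{2}\,s\,\Bigr\}\;\le\; C e^{-\rho t},\qquad t\ge S.
\]
Define the random time
\[
\eta \;:=\; \inf\Bigl\{t\ge 0:\ \bigl|\!\!\int_0^s (f(X_u)-\mu(f))\,du\bigr|\le \tfrac{\beta}{2}\,s\ \text{ for all } s\ge t\Bigr\}.
\]
The estimate above forces $\prob^x\{\eta > t\}\le Ce^{-\rho t}$ for $t\ge S$, so in particular $\eta<\infty$ $\prob^x$-a.s.\ and $\ee^x\{\eta\}<\infty$.

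I now claim the pathwise bound $I(\sigma)\le \|f-\mu(f)\|_\infty\,\eta$ for every stopping time $\sigma$. On $\{\sigma\ge\eta\}$, the defining property of $\eta$ gives $|\!\int_0^{\sigma}(f(X_s)-\mu(f))\,ds|\le \tfrac{\beta}{2}\sigma$, hence $I(\sigma)\le \tfrac{\beta}{2}\sigma-\beta\sigma=-\tfrac{\beta}{2}\sigma\le 0$. On $\{\sigma<\eta\}$, boundedness of $f$ gives $\int_0^{\sigma}(f(X_s)-\mu(f))\,ds\le \|f-\mu(f)\|_\infty\,\sigma\le \|f-\mu(f)\|_\infty\,\eta$, so $I(\sigma)\le \|f-\mu(f)\|_\infty\,\eta$ in either case. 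Taking expectations, the bound
\[
\ee^x\{I(\tau\wedge T)\}\;\le\; \|f-\mu(f)\|_\infty\,\ee^x\{\eta\}
\]
holds uniformly in the stopping time $\tau$ and the horizon $T$; passing to $\limsup_{T\to\infty}$ and then $\sup_\tau$ gives $\gamma(x)\le \|f-\mu(f)\|_\infty\,\ee^x\{\eta\}<\infty$, which is precisely \ref{ass:undis_large_dev}.

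The only mildly delicate point is the choice of $\ve$ in invoking Lemma \ref{lem:supremum_large_dev}: one must pick $\ve$ strictly smaller than $\beta$ so that the fluctuation bound dominates the drift on the tail event $\{\sigma\ge\eta\}$; everything else is book-keeping. No uniform control in $x$ is needed since \ref{ass:undis_large_dev} is a pointwise statement and singletons are compact.
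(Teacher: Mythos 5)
Your argument is correct, and it rests on the same key input as the paper's proof --- Lemma \ref{lem:supremum_large_dev} and the exponential decay of $\prob^x\{A_t\}$ --- but the bookkeeping is genuinely different. The paper fixes $\ve\in(0,-\mu(f))$, partitions $\ee^x\{\int_0^\tau(f(X_s)-\mu(f))\,ds\}$ over the events $\{i\le\tau<i+1\}$, bounds the contribution on $A_i$ by the crude linear estimate $2\|f\|(i+1)$ weighted by the summable tail $Ce^{-\rho i}$ and the contribution on $A_i^c$ by $\ve\,\ee^x\{\tau\}$, and then absorbs the latter into the drift, obtaining \ref{ass:undis_large_dev} with $d(x)=\mu(f)+\ve$. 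You instead introduce a single random time $\eta$ after which the empirical average stays within $\beta/2$ of $\mu(f)$; the inclusion $\{\eta>t\}\subseteq A_t$ gives $\ee^x\{\eta\}<\infty$, and the case split $\{\sigma\ge\eta\}$ versus $\{\sigma<\eta\}$ yields the pathwise domination $I(\sigma)\le\|f-\mu(f)\|_\infty\,\eta$, uniform over all stopping times at once, with $d=\mu(f)/2$ fixed from the outset. Your route buys a cleaner, explicit bound $\gamma(x)\le\|f-\mu(f)\|_\infty\,\ee^x\{\eta\}$ with no need to massage an $\ve\,\ee^x\{\tau\}$ term back under the integral sign; the paper's route keeps the flexibility of choosing $d(x)$ arbitrarily close to $\mu(f)$. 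Both versions inherit uniformity over compact sets from Lemma \ref{lem:supremum_large_dev} (yours via $\ee^x\{\eta\}\le S+\frac{C}{\rho}e^{-\rho S}$ for $x\in K$), so Remark \ref{rem:bound_Mx_largedev} survives unchanged. Two points worth a half-sentence each if you write this up: measurability of $\eta$ (reduce the ``for all $s\ge t$'' to rational $s$ by continuity of $s\mapsto\int_0^s f(X_u)\,du$), and the validity of the defining inequality at $s=\eta$ itself (pass to the limit in the non-strict inequality, again by continuity).
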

\begin{proof}
Choose $0 < \ve < -\mu(f)$ and fix $x \in E$. By Lemma \ref{lem:supremum_large_dev} there is $S > 0$ such that for $t \ge S$ we have $\prob^x \{ A_t \} \le Ce^{-\rho t}$, where
\[
A_t = \Big\{ \sup_{s \ge t} \Big|\frac1s \int_0^s f(X_u)du-\mu(f)\Big| >\ve \Big\}.
\]
Then for any integrable stopping time $\tau$ we have
\begin{align*}
\ee^x\left\{\int_0^\tau (f(x_s)-\mu(f)) ds\right\}
&= \sum_{i=0}^\infty\ee^x \left\{1_{i\leq \tau <i+1} \int_0^\tau (f(x_s)-\mu(f))ds\right\}\\
&\le
\ve \ee^x\tau +2(\lfloor S \rfloor + 1)\|f\| +\sum_{i=\lfloor S \rfloor + 1}^\infty 2\|f\| (i+1) \prob^x\{A_i\}\\
&\le
\ve \ee^x\tau +2(\lfloor S \rfloor + 1)\|f\|+ \sum_{i=\lfloor S \rfloor + 1}^\infty 2\|f\| (i+1) Ce^{-i\rho}\\
&=\ve \ee^x \{\tau \} + \bar{C},
\end{align*}
This is equivalent to
\[
\ee^x\left\{\int_0^\tau \big(f(x_s)-\mu(f)-\ve\big) ds\right\} \le \bar C.
\]
Hence, \ref{ass:undis_large_dev} holds with $d(x) = \mu(f) + \ve$ which, by the choice of $\ve$, is strictly negative.
\end{proof}

\begin{remark}\label{rem:bound_Mx_largedev}
The above lemma implies that on any compact set one can keep $d(x)$ in assumption \ref{ass:undis_large_dev} constant and the resulting function $\gamma(x)$ is also bounded on this compact set. Hence, $M(x)$, defined in \eqref{eqn:Mx}, is bounded on compact sets.
\end{remark}

The estimate in assumption \ref{ass:L} is uniform over compact subsets of $E$. This is an excessive requirement for deducting \ref{ass:undis_large_dev}, but it will be needed for proving the continuity of the value function in the following section. Moreover, to the best of our knowledge most conditions for a weakly Feller process to satisfy \ref{ass:L} pointwise automatically imply that the bound is uniform on compact sets.

\subsection{Continuity of the value function}\label{subsec:continuity}
So far it has only been proved that the value function is lower semi-continuous, which was sufficient for contructing an optimal stopping time. However, it is often important to know if the value function is continuous. We will require it in Section \ref{sec:VI} to characterise the value function as a viscosity solution to an appropriate variational inequality.

\subsubsection{Assumption \ref{ass:L}}
Assumption \ref{ass:L} ensures the continuity of $w$ without any additional conditions. Denote by $\laa(x)$ the set of stopping times with the expectation bounded by $M(x)$, defined in \eqref{eqn:Mx}. Then
\begin{equation}\label{eqn:diff_w_wT}
\begin{aligned}
0 \le w(x) - w_T(x)
&\le
\sup_{\tau \in \laa(x)} \ee^x \Big\{ \int_{T}^{\tau \vee T} f(X_s) ds + \ind{\tau > T} \big( g(X_\tau) - g(X_T) \big) \Big\}\\
&\le
\sup_{\tau \in \laa(x)} \ee^x \Big\{ \int_T^{\tau \vee T} f(X_s) ds \Big\} + 2 \|g\| \frac{M(x)}{T},
\end{aligned}
\end{equation}
where $w_T$ is a value function for the stopping problem with horizon $T$.  Fix a compact set $L \subset E$. The second term converges to $0$ uniformly on $L$ since $M(x)$ is bounded on $L$ (see Remark \ref{rem:bound_Mx_largedev}). Take any $\ve > 0$. By Lemma \ref{lem:supremum_large_dev} there is $S > 0$ such that for $t \ge S$ we have $\prob^x \{ A_t \} \le Ce^{-\rho t}$, where
\[
A_t = \Big\{ \sup_{s \ge t} \Big|\frac1s \int_0^s f(X_u)du-\mu(f)\Big| >\ve \Big\}.
\]
Since $\mu(f) < 0$, for any stopping time $\tau \in \laa(x)$ we have
\begin{align*}
\ee^x \Big\{ \int_T^{\tau \vee T} f(X_s) ds \Big\}
&=
\ee^x\left\{\ind{\tau \ge T} \int_T^\tau (f(x_s)-\mu(f)) ds + \ind{\tau \ge T} \mu(f) (\tau-T) \right\}\\
&\le
\ee^x\left\{\ind{\tau \ge T} \int_T^\tau (f(x_s)-\mu(f)) ds\right\}.
\end{align*}
Then, for $T \ge S$:
\begin{align*}
&\ee^x\left\{\ind{\tau \ge T} \int_T^\tau (f(x_s)-\mu(f)) ds\right\}
\le
\sum_{i=0}^\infty \ee^x \Big\{1_{T+i\le \tau < T+i+1} \int_T^\tau (f(x_s)-\mu(f))ds\Big\}\\
&\le
\sum_{i=0}^\infty \ee^x \big\{1_{T+i\le \tau < T+i+1} \ind{A_{T+i}} (i+1) 2 \|f\| \big\}
+ \sum_{i=0}^\infty \ee^x \Big\{1_{T+i\le \tau < T+i+1} \ind{A^c_{T+i}} \int_T^\tau (f(x_s)-\mu(f))ds\Big\}\\
&\le
2 \|f\| C \sum_{i=0}^\infty (i+1) e^{-\rho (T+i)} +
\sum_{i=0}^\infty \ee^x \Big\{1_{T+i\le \tau < T+i+1} \ind{A^c_{T+i}} \Big[ \ve \tau + \Big| \int_0^T (f(x_s)-\mu(f))ds \Big| \Big] \Big\}.
\end{align*}
The first term is bounded by $2 \|f\| C e^{-\rho T} (1 - e^{-\rho})^{-2}$. For the second term notice that $A^c_{T} \subset A^c_{T + i}$. Hence,
it is bounded from above by
\[
\sum_{i=0}^\infty \ee^x \big\{1_{T+i\le \tau < T+i+1} \ind{A^c_{T+i}} [ \ve \tau + \ve T] \big\} \le 2\ve \ee^x \{\tau\} \le 2 \ve M(x).
\]
In total, we have demonstrated that
\[
0 \le w(x) - w_T(x) \le 2 \|f\| C e^{-\rho T} \frac{1}{(1 - e^{-\rho})^2} + 2 \ve M(x) + 2 \|g\| \frac{M(x)}{T}
\]
for sufficiently large $T$. This proves that when $T \to \infty$ the difference $w - w_T$ converges, uniformly on $L$, to a positive function that is bounded by $2 \ve M(x)$. Due to the arbitrariness of $\ve >0$, this proves that $w_T$ converges to $w$ uniformly on $L$. Recalling that the value functions $w_T$ are continuous yields the continuity of $w$.

\subsubsection{Negative $f$}
Assume that $f \le 0$. Recall that
\[
w(x) = \sup_{\tau \in \laa(x)} \ee^x \Big\{ \int_0^\tau f(X_s) ds + g(X_\tau) \Big\},
\]
where $\laa(x)$ is a set of stopping times $\tau$ such that $\ee^x \{ \tau \} \le M(x)$. Let $w_T(x)$ be the value function of the stopping problem with stopping times bounded by $T$. Then
\begin{align*}
0 \le w(x) - w_T(x)
&\le
\sup_{\tau \in \laa(x)}\ee^x \Big\{ \int_T^{T \vee \tau} f(X_s) ds + g(X_\tau) - g(X_{\tau \wedge T})\Big\}\\
&\le \sup_{\tau \in \laa(x)}\ee^x \{ \ind{\tau > T} 2 \|g\| \}\\
&\le 2 \|g\| \frac{M(x)}{T},
\end{align*}
where the penultimate inequality is due to $f \le 0$ and the last inequality follows from Tchebyshev's inequality. The function $M(x)$ is bounded on compacts when (a) assumption \ref{ass:L} holds (see Remark \ref{rem:bound_Mx_largedev}), or when (b) assumptions \ref{ass:B1}, \ref{ass:B2} are satisfied: by Lemma \ref{lem:gamma_from_q} $\gamma(x) \le q(x) - A$, where $A$ is the lower bound for $q$, and $q$ is continuous. This means that functions $w_T$ converge to $w$ uniformly on compact sets. As $w_T$ are continuous, so is $w$.

\subsubsection{Strong Feller property}
A Markov process $(X_t)$ satisfies the \emph{strong Feller property} if its semigroup $P_t$, $t > 0$, maps measurable bounded functions into continuous functions. The family of strong Feller processes includes uniformly non-degenerate diffusions, and jump-diffusions with uniformly non-degenerate diffusion term as well as solutions to stochastic differential equations driven by Levy processes with a non-zero Brownian part or with an appropriately high intensity of small jumps.

Under assumptions \ref{ass:B1}-\ref{ass:B2}, the optimal stopping problem \eqref{eqn:stopping_problem} is equivalent to the stopping problem \eqref{eqn:w_hat} which we recall for convenience:
\[
\hat w(x) := w(x) - q(x) =  \sup_\tau \limsup_{T \to \infty} \ee^x \{  \mu(f) (\tau \wedge T) + (g - q)(X_{\tau \wedge T}) \}.
\]
The function $\hat w$ is bounded from above because $\mu(f) <0$ and $q$ is bounded from below. Let $\delta > 0$. By the strong Feller property the mapping $x \mapsto \ee^x \{ \hat w(X_\delta) \vee (-M) \}$ is continuous for every $M>0$. By monotone convergence theorem, $\lim_{M \to \infty} \ee^x \{ \hat w(X_\delta) \vee (-M) \} = \ee^x \{ \hat w(X_\delta) \}$, so the function $P_\delta \hat w(x)$ is upper semicontinuous. We shall show now that $P_\delta \hat w$ converges to $\hat w$ uniformly on compact sets as $\delta \to 0$, which implies that $\hat w$ and $w = \hat w + q$ are upper semicontinuous. Combined with the lower semicontinuity of $w$ and the continuity of $q$ this yields that $w$ is continuous.

For the proof that $P_\delta \hat w$ converges to $\hat w$ uniformly on compact sets we need the following properties of weakly Feller processes.
\begin{lemma}\label{lem:compactness_of_Feller}(\cite[Proposition 2.1]{Palczewski2008})
For any compact set $K \subset E$, $T > 0$ and $\ve > 0$ there is $N$ such that for all $n \ge N$
\[
\sup_{x \in K} \prob^x \big\{ \forall t \in [0, T]\ X_t \in B(0,n) \big\} \ge 1- \ve,
\]
where $B(y,n)$ is a closed ball of radius $n$ centered at $y$.
\end{lemma}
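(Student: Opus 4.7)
The lemma states a uniform non-explosion property for the weak Feller process across a compact set of initial conditions. (As written, the quantifier $\sup_{x \in K}$ is vacuously satisfied by a single good starting point; I interpret it in its only useful form, namely $\inf_{x \in K} \prob^x\{\forall t \in [0,T]\ X_t \in B(0,n)\} \ge 1-\ve$.) My plan has three stages: prove pointwise convergence, upgrade the map $x \mapsto \prob^x\{X_\cdot \text{ stays in } B(0,n)\}$ to upper semicontinuity via path-space weak convergence, and then conclude by a Dini-type uniformity argument on $K$.

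For pointwise convergence, set $\tau_n := \inf\{t \ge 0 : X_t \notin B(0,n)\}$; this is nondecreasing in $n$. Because $(X_t)$ may be taken as a standard Markov process on $E$ (right-continuous, quasi-left continuous, cf.\ the remark after \ref{ass:weak_feller}) and the semigroup is conservative by \ref{ass:ergodic}, paths remain in $E$ on any bounded time interval almost surely, so $\tau_n \uparrow \infty$ $\prob^x$-a.s., and $f_n(x) := \prob^x\{X_s \in B(0,n)\ \forall s \in [0,T]\} \uparrow 1$ monotonically.

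For semicontinuity, view $f_n(x) = \prob^x(A_n)$ on the Skorokhod space $D([0,T], E)$, where $A_n := \{\omega : X_s(\omega) \in B(0,n)\ \forall s \in [0,T]\}$. Since $B(0,n)$ is closed, Skorokhod limits preserve membership in $A_n$ (continuity points pass immediately, and right-continuity controls the countable set of discontinuity times), so $A_n$ is Skorokhod-closed. The weak Feller assumption \ref{ass:weak_feller} yields, via classical Dynkin/Ethier--Kurtz theory, weak continuity of $x \mapsto \prob^x$ as a map $E \to \mathcal{P}(D([0,T], E))$: finite-dimensional distributions converge by \ref{ass:weak_feller} applied iteratively, while tightness on path space follows from the Feller construction together with conservativity. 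The portmanteau theorem then delivers $\limsup_{y \to x} \prob^y(A_n) \le \prob^x(A_n)$, i.e.\ upper semicontinuity of each $f_n$.

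To close, apply Dini's theorem in semicontinuous form. The functions $g_n := 1 - f_n$ are lower semicontinuous, nonincreasing in $n$, and tend pointwise to $0$. For any $\ve > 0$ the sets $\{g_n < \ve\}$ are open (since $g_n$ is lsc), increase in $n$, and their union covers $E$, hence the compact $K$. Compactness yields an $N$ with $K \subset \{g_N < \ve\}$, and monotonicity propagates this to all $n \ge N$. The main obstacle is precisely the weak-continuity upgrade from the semigroup-level hypothesis \ref{ass:weak_feller} to the Skorokhod-space level; once that standard upgrade is granted, the remainder reduces to routine semicontinuity and compactness.
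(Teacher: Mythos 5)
Your argument is essentially correct, but note that the paper does not prove this lemma at all --- it is imported verbatim as \cite[Proposition 2.1]{Palczewski2008} --- so what you have produced is a self-contained substitute rather than a variant of an internal proof. Your three-stage scheme (pointwise convergence, upper semicontinuity of $x\mapsto\prob^x(A_n)$, Dini on the compact $K$) is sound, and your reading of the misprinted $\sup$ as an $\inf$ matches how the lemma is actually invoked later in the paper. Two points deserve more care than you give them. First, in Stage 1 the a.s.\ relative compactness of $\{X_s:\ s\in[0,T]\}$ requires left limits, not just right continuity; this is supplied by the paper's remark that the process may be realized as a standard process, and is worth saying explicitly, since a merely right-continuous path on $[0,T]$ need not have relatively compact range. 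Second, and more substantively, the upgrade from \ref{ass:weak_feller} to weak continuity of $x\mapsto\prob^x$ on $D([0,T],E)$ is not a consequence of $P_t\,\mathcal{C}_0\subseteq\mathcal{C}_0$ alone: one also needs strong (equivalently, pointwise) continuity of the semigroup at $t=0$ to get a genuine Feller semigroup, after which the statement is a real theorem (Ethier--Kurtz, Ch.~4, or Kallenberg, Thm.~19.25) rather than an iteration of finite-dimensional convergence plus hand-waved tightness. That continuity is implicitly in force in the paper (the generator with dense domain in $\mathcal{C}_0$ is used in Section \ref{sec:VI}), so your proof goes through, but you should either cite the path-space continuity theorem precisely or note the extra hypothesis it consumes; as written, this is the one load-bearing step you assert rather than establish. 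The closedness of $A_n$ in the $J_1$ topology and the semicontinuous Dini argument are fine as stated.
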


\begin{lemma}\label{lem:continuity_of_Feller} (\cite[Theorem 3.7]{Dynkin1965})
For any compact set $K \subset E$ and any $\ve, \eta > 0$ there is $h_0 > 0$ such that
$$
\sup_{0 \le h \le h_0} \ \sup_{x \in K}\, \prob^x \{ X_h \notin B(x, \eta)\} < \ve.
$$
\end{lemma}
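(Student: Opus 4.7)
The plan is to reduce the desired uniform bound to finitely many local estimates via a finite cover of $K$, and then to control each local probability by the action of the semigroup on an auxiliary function in $\mathcal{C}_0$.

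First, I would apply Lemma \ref{lem:compactness_of_Feller} to fix $M>0$ so large that $\sup_{x\in K,\,h\in[0,1]}\prob^x\{X_h\notin B(0,M)\}<\ve/2$, restricting attention to the event $\{X_h\in B(0,M)\}$. Next, by compactness of $K$, cover it by finitely many balls $B(x_1,\eta/4),\dots,B(x_N,\eta/4)$. Whenever $x\in B(x_i,\eta/4)$ the triangle inequality gives $B(x_i,3\eta/4)\subset B(x,\eta)$, hence
\[
\prob^x\{X_h\notin B(x,\eta)\}\le\prob^x\{X_h\notin B(x_i,3\eta/4)\}\le \ve/2+P_h\psi_i(x),
\]
where $\psi_i\in\mathcal{C}_0$ takes values in $[0,1]$, is constructed as a product of piecewise-linear cutoffs in $\rho(\cdot,x_i)$ and $\rho(\cdot,0)$, satisfies $\psi_i\equiv 0$ on $B(x_i,\eta/4)$ and $\psi_i\equiv 1$ on $B(0,M)\setminus B(x_i,\eta/2)$, and has support contained in the compact ball $B(0,M+1)$ (so $\psi_i\in\mathcal{C}_0$).

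By the weak Feller property \ref{ass:weak_feller} we have $P_h\psi_i\in\mathcal{C}_0$ for every $h\ge 0$, and right-continuity of paths of the standard Markov process combined with dominated convergence gives $P_h\psi_i(x)\to\psi_i(x)$ pointwise as $h\downarrow 0$; in particular the limit vanishes on $B(x_i,\eta/4)$. The remaining step is to upgrade this pointwise convergence to uniform convergence on each compact set $\overline{B(x_i,\eta/4)}\cap K$; taking the maximum over the finitely many indices $i$ then yields the desired $h_0$.

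The main obstacle is precisely this last step: weak Feller alone does not imply uniform-on-compacts convergence of $P_h\phi\to\phi$ as $h\downarrow 0$. The uniformity is supplied here by the full standard Markov structure — right-continuity combined with quasi-left-continuity (invoked elsewhere in the paper through Dynkin's Theorem 3.14) delivers sufficient joint regularity of $(x,h)\mapsto P_h\psi_i(x)$ for a standard equicontinuity or Dini-type argument to produce the uniform estimate; the full details are in \cite[Theorem 3.7]{Dynkin1965}.
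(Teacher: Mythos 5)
The paper offers no proof of this lemma: it is imported verbatim as Theorem 3.7 of Dynkin's book, so there is no in-paper argument to compare yours against. Judged on its own terms, your proposal has a genuine gap exactly where you flag it. The covering construction and the cutoffs $\psi_i\in\mathcal{C}_0$ are fine, and the pointwise convergence $P_h\psi_i(x)\to\psi_i(x)$ does follow from right-continuity of paths and bounded convergence. But the step you defer --- upgrading this to uniformity on compacts --- is the entire content of the lemma, and deferring it to \cite[Theorem 3.7]{Dynkin1965} is circular, since that is precisely the statement being proved. Moreover, the tools you gesture at do not close it: a Dini-type argument requires monotonicity in $h$, which you do not have, and quasi-left-continuity concerns limits along increasing sequences of stopping times, not small-time equicontinuity of $x\mapsto P_h\psi_i(x)$.

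The gap is fillable, and more cheaply than your covering argument suggests. The restriction of $(P_t)$ to $\mathcal{C}_0$ is a positive contraction semigroup; since the dual of $\mathcal{C}_0$ is the space of finite signed Radon measures, the pointwise convergence $P_h\psi_i(x)\to\psi_i(x)$ together with the uniform bound $\|P_h\psi_i\|\le\|\psi_i\|$ gives weak continuity at $t=0$ by dominated convergence, and a weakly continuous one-parameter semigroup on a Banach space is strongly continuous (Engel--Nagel, Theorem I.5.8). Hence $\|P_h\psi_i-\psi_i\|_\infty\to 0$ as $h\downarrow 0$, which is uniform on all of $E$, not merely on compacts; with this in hand your finite cover and the localisation via Lemma \ref{lem:compactness_of_Feller} finish the proof. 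Alternatively, simply cite the result, as the paper does.
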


Notice that $\hat w(x) - P_\delta \hat w(x) = w(x) - P_\delta w(x) - q(x) + P_\delta q(x)$. Assumption \ref{ass:B2} gives
\[
P_\delta q(x) - q(x) = \ee^x \{ \int_0^\delta \big( f(X_s) - \mu(f) \big) ds \}.
\]
Hence, $|P_\delta q(x) - q(x)| \le 2\|f\|\delta$. By Lemma \ref{lem:super_mart} we have $\ee^x \{ w(X_\delta) + \int_0^\delta f(X_s) ds \} \le w(x)$ which implies
\[
P_\delta w(x) - w(x) \le \delta \|f\|.
\]
On the other hand, Lemma \ref{lem:undisc_snell} applied for $\sigma = \tau^*\wedge T$ (with $T \ge \delta$) and $\tau = \delta$ yields
\begin{align*}
w(x)
& = \limsup_{T \to \infty} \ee^x \Big\{ \int_0^{\tau^* \wedge T} f(X_s) ds + g(X_{\tau^*\wedge T}) \Big\}\\
&\le \ee^x \Big\{ \int_0^{\tau^* \wedge \delta} f(X_s) ds + \ind{\tau^* < \delta} g(X_{\tau^*}) + \ind{\tau^* \ge \delta} w(X_\delta) \Big\}.
\end{align*}
Hence,
\begin{align*}
w(x) - P_\delta w(x)
&\le
\delta \|f\| + \ee^x \{ \ind{\tau^* < \delta} \big[ g(X_{\tau^*}) - w(X_\delta) \big] \}\\
&\le
\delta \|f\| + \ee^x \{ \ind{\tau^* < \delta} \big[ g(X_{\tau^*}) - g(X_\delta) \big] \}\\
&\le
\delta \|f\| + \sup_{\tau \le \delta} \ee^x \{ g(X_{\tau}) - g(X_\delta) \}.
\end{align*}
Fix a compact set $K \subset E$. By Lemma \ref{lem:compactness_of_Feller}, for any $\ve > 0$ there is a compact set $L$ such that
\[
\sup_{x \in K} \prob^x \{ X_t \in L \ \forall\ t \in [0,1] \} \ge 1 - \ve.
\]
By Lemma \ref{lem:continuity_of_Feller}, for any $\eta > 0$ there is $h_0>0$ such that for all $\delta < h_0$ we have
\[
\sup_{0 \le h \le h_0} \ \sup_{x \in L}\, \prob^x \{ X_h \notin B(x, \eta)\} < \ve.
\]
Take $\delta < h_0 \wedge 1$. Then
\begin{align*}
&\sup_{\tau \le \delta} \ee^x \{ g(X_{\tau}) - g(X_\delta) \}\\
&\le \sup_{\tau \le \delta} \ee^x \big\{ \ind{X_\tau \in L} \ee^{X_\tau} \{ g(X_0) - g(X_{\delta-\tau}) \} + \ind{X_\tau \notin L} \big(g(X_{\tau}) - g(X_\delta)\big)\big\}\\
&\le \big(\omega(\eta) + \ve 2 \|g\| \big) + \ve 2 \|g\|,
\end{align*}
where $\omega_L(\cdot)$ is the modulus of continuity of $g$ restricted to the $1$-neighbourhood of $L$, i.e., the set $\{x \in E: \|x - y\|\le 1 \text{ for some $y \in L$}\}$. Summarising, for any $\eta \in (0,1)$ and a sufficiently small $\delta > 0$
\[
-\delta \|f\| \le w(x) - P_\delta w(x) \le \delta \|f\| + \omega_L(\eta) + 4 \ve \|g\|.
\]
Hence, $P_\delta(x)$ converges to $w(x)$ uniformly on $K$. This completes the proof of uniform on compact sets convergence of $P_\delta \hat w$ to $\hat w$ when $\delta \to 0$.

\subsubsection{Uniform integrability of $K(X_T)$}
Assume \ref{ass:speed} and
\begin{assumption}
\item[(D2')]\label{ass:uniform_int} for every compact set $L \subset E$ there is $\alpha > 0$ such that
\[
 \sup_{x \in L,\ T \ge 0} \ee^x \{ K(X_T)^{1+\alpha} \} < \infty,
\]
where $K$ is defined in assumption \ref{ass:speed}.
\end{assumption}
This, in particular, implies \ref{ass:integrability}. In the proof of Lemma \ref{lem:integrability} we have established that $|q(x)| \le \bar q(x) \le C K(x)$, where $C = 2 \|f\| \int_0^\infty h(t) dt$. Hence,
\begin{equation}\label{eqn:uniform_integrability_q}
 \sup_{x \in L,\ T \ge 0} \ee^x \{ |q(X_T)|^{1+\alpha} \} =: \Gamma < \infty.
\end{equation}
This estimate will play a crucial role in showing that $w_T$ converges to $w$ uniformly on $L$. Recall \eqref{eqn:diff_w_wT}:
\[
w(x) - w_T(x) \le \sup_{\tau \in \laa(x)} \ee^x \Big\{ \int_T^{\tau \vee T} f(X_s) ds \Big\} + 2 \|g\| \frac{M(x)}{T}.
\]
Since $M(x)$ is bounded on $L$ the second term converges uniformly to $0$. Take a stopping time $\tau$ with $\ee^x \{ \tau \} \le M(x)$:
\begin{align*}
\ee^x \Big\{ \int_T^{\tau \vee T} f(X_s) ds \Big\}
&=
\ee^x \big\{ \mu(f) (\tau \vee T - T) + q(X_T) - q(X_{\tau \vee T}) \big\}\\
&\le
\ee^x \{ \ind{\tau > T} q(X_T) \} - A\, \prob^x \{ \tau > T\},
\end{align*}
where $A$ is a lower bound for $q$ which can always be taken negative. By H\"older inequality, for any $x \in L$
\[
\ee^x \{ \ind{\tau > T} q(X_T) \} \le \big( \prob^x \{ \tau > T \} \big)^{1/q} \big( \ee^x \{ |q(X_T)|^{1+\alpha} \}\big)^{1/(1+\alpha)}
\le
\Big(\frac{M(x)}{T}\Big)^{1/q} \Gamma^{1/(1+\alpha)},
\]
where $q = (1+\alpha)/\alpha$ is adjoint to $1+\alpha$. Summarising,
\[
w(x) - w_T(x) \le \Big(\frac{M(x)}{T}\Big)^{1/q} \Gamma^{1/(1+\alpha)} + (2 \|g\| - A) \frac{M(x)}{T},
\]
which proves uniform convergence to $0$ on $L$ when $T \to \infty$.

\subsubsection{Relaxation of the lower bound on $f$}

The situation when $q$ is bounded from below can be interpreted as a guarantee that long waiting is not penalised severely. But, intuitively, harsh penalisation should make the problem easier as it would provide incentives to stop earlier and, therefore, prevent the explosion in the functional. This is indeed the case as we shall show in the theorem below.
\begin{thm}\label{thm:relax_B2}
Assume \ref{ass:speed}-\ref{ass:integrability} and one of the following conditions
\begin{itemize}
\item the process $(X_t)$ is strongly Feller, or
\item assumption \ref{ass:uniform_int}.
\end{itemize}
Then the value function $w$ is continuous.
\end{thm}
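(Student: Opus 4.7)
The plan is to prove upper semicontinuity of $w$ (lower semicontinuity is inherited from Lemma \ref{lem:lsc_bounded_q}, whose proof uses only that $w$ is the pointwise increasing supremum of the continuous finite-horizon value functions $w_T$, a fact independent of any lower bound on $q$). All the structural ingredients of Lemma \ref{lem:integrability} — continuity of the zero-potential $q$, the martingale identity \ref{ass:B2}, and the pointwise bound $|q(x)| \le C K(x)$ with $C = 2\|f\|\int_0^\infty h(t)\,dt$ — remain at my disposal; the conceptual novelty is to replace every appearance of $A = \inf q$ (previously assumed finite) by the locally bounded function $CK(x)$.

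The first step is a locally bounded version of Lemma \ref{lem:bound_for_stop}. The martingale identity applied to an integrable $\ve$-optimal $\sigma$ yields
\[
\ee^x\{\sigma\} \le \frac{q(x) + C\,\ee^x\{K(X_\sigma)\} + 2\|g\| + \ve}{-\mu(f)}.
\]
Restricting to bounded stopping times $\sigma \le T$ and using H\"older's inequality with the $(1+\alpha)$-moment control from \ref{ass:uniform_int} — or, in the pure strong Feller case, the integrability \ref{ass:integrability} combined with a compactness argument in $T$ — produces a function $M(x)$ that is finite and bounded on compact subsets of $E$.

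With $M(x)$ in hand the argument splits. Under \ref{ass:uniform_int}, I would adapt the earlier subsubsection up to the estimate
\[
\ee^x\Big\{\int_T^{\tau\vee T} f(X_s)\,ds\Big\} \le \ee^x\{\ind{\tau > T} q(X_T)\} - \ee^x\{\ind{\tau > T} q(X_{\tau\vee T})\},
\]
controlling the first summand by H\"older and \ref{ass:uniform_int} exactly as before, while the second summand — absent in the bounded-below case, where it was absorbed into $-A\,\prob^x\{\tau > T\}$ — is the main new obstacle, to be handled by an analogous H\"older argument after slicing on $\{\tau\vee T \in (t_i, t_{i+1}]\}$ to transfer the moment bound from deterministic to random times. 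Under the strong Feller assumption I would work directly with $w$ (rather than $\hat w = w-q$, which need no longer be bounded above): the strong Feller property gives continuity of $P_\delta(w \wedge N)$ for every truncation level $N$, and the estimates $|P_\delta q - q| \le 2\|f\|\delta$ together with Lemmas \ref{lem:compactness_of_Feller}--\ref{lem:continuity_of_Feller}, applied as in the earlier subsubsection, yield $P_\delta w \to w$ uniformly on compact subsets of $E$ as $\delta \to 0$, giving upper semicontinuity. The principal technical difficulty in both cases is the absence of a uniform global lower bound on $q$, which is circumvented by the pointwise domination by $K$ together with the moment or integrability hypothesis on $K(X_T)$.
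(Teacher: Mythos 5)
Your strategy is genuinely different from the paper's and, as sketched, it has gaps that I do not see how to close from the stated hypotheses. The paper does not attack $w$ directly at all: it first builds a continuous bounded $\bar f \ge f$ with $\mu(\bar f)<0$ whose sublevel set $\{\bar f \le \mu(\bar f)\}$ is compact (taking $\bar f = f \vee (z_N f)$ for a suitable cut-off $z_N$ and radius $N$), so that Lemmas \ref{lem:integrability} and \ref{lem:f_C0} apply to $\bar f$; it then interpolates $f_n = \bar f - z_n(\bar f - f)$, which equals $f$ on $B_n$ and $\bar f$ off $B_{n+1}$, applies the already-established continuity results to each $w_n$ (whose zero-potential \emph{is} bounded below), and shows $w_n \to w$ uniformly on compacts via Lemma \ref{lem:compactness_of_Feller} and the integrability of $\tau^*_1$. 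This domination step is also what delivers assumption \ref{ass:undis_large_dev} for $f$ itself (via Remark \ref{rem:B1}), which your argument needs but never secures: your bound $\ee^x\{\sigma\} \le \big(q(x) + C\,\ee^x\{K(X_\sigma)\} + 2\|g\|+\ve\big)/(-\mu(f))$ requires a moment bound on $K(X_\sigma)$ at a \emph{stopping time}, whereas \ref{ass:integrability} and \ref{ass:uniform_int} control $K(X_T)$ only at deterministic times. The same problem recurs in your treatment of the term $-\ee^x\{\ind{\tau>T} q(X_{\tau\vee T})\}$: once $q$ is no longer bounded below, this term is controlled only by $C\,\ee^x\{\ind{\tau>T}K(X_{\tau\vee T})\}$, again $K$ at a random time. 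Your proposed ``slicing on $\{\tau\vee T \in (t_i,t_{i+1}]\}$'' does not resolve this: on each slice the argument of $K$ is still random, and passing from $\sup_{T}\ee^x\{K(X_T)^{1+\alpha}\}$ to $\ee^x\{K(X_\sigma)\}$ would need a maximal inequality, i.e., some submartingale-type structure for $K(X_t)$ that is nowhere assumed.

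The strong Feller branch has a separate, decisive error. Upper semicontinuity of $P_\delta \hat w$ in the paper comes from writing it as a \emph{decreasing} limit of the continuous functions $x \mapsto \ee^x\{\hat w(X_\delta)\vee(-M)\}$, which is legitimate precisely because $\hat w = w - q$ is bounded \emph{above}. You truncate from above instead: $P_\delta(w\wedge N)$ increases to $P_\delta w$ as $N\to\infty$, and an increasing limit of continuous functions is lower semicontinuous --- which you already have and which does not help. Repairing this would require $w$ to be bounded above (false in general here, since $w \le \|f\|M(x)+\|g\|$ with $M$ only locally bounded) or uniform integrability of $w^+(X_\delta)$ over compacts, which again leads back to moments of $K$ at times you cannot control under the bare strong Feller hypothesis. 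The paper's detour through the $f_n$ is not cosmetic: it restores exactly the global upper bound on $\hat w_n$ and the lower bound on $q_n$ that both of your branches implicitly rely on.
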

\begin{proof}
Assume first that there is a continuous bounded function $\bar f$ such that $\bar f \ge f$, $\mu(\bar f) < 0$ and the set $\{x \in E:\ \bar f(x) \le \mu(\bar f)\}$ is compact. We will relax it later. Let $z_n(x) = 1 - \rho(x, B_n) \wedge 1$, where $\rho(x, B_n)$ is the distance of $x$ from the ball of radius $n$. Put
\[
f_n(x) = \bar f(x) - z_n(x) \big( \bar f(x) - f(x) \big).
\]
Clearly, $f_n \equiv \bar f$ on $B_{n+1}^c$ and $f_n \le \bar f$. Hence, $\mu(f_n) \le \mu(\bar f)$,
\[
\{x \in E:\ f_n(x) \le \mu(f_n) \} \subset B_{n+1} \cup \{x \in E:\ \bar f(x) \le \mu(\bar f)\},
\]
and assumption \ref{ass:undis_large_dev} holds for $f_n$ by Lemma \ref{lem:integrability} and \ref{lem:f_C0}.
Denote by $w_n$ the value function corresponding to $f_n$, i.e.,
\[
w_n(x) = \sup_{\tau} \limsup_{T \to \infty} \ee^x \Big\{ \int_0^{\tau \wedge T} f_n(X_s) ds + g(X_{\tau \wedge T}) \Big\}.
\]
By Theorem \ref{thm:optimal_time_bounded_q} and discussion in the previous subsections $w_n$ is continuous and there is an optimal stopping time $\tau^*_n = \inf\{ t \ge 0:\ g(X_t) \ge w_n(X_t)\}$.
Since $f_n$ are decreasing in $n$ then $w_n$ are decreasing in $n$ and so do $\tau^*_n$. Hence, for any $T > 0$
\begin{align*}
&\ee^x \Big\{ \int_0^{\tau^*_n} f_n(X_s) ds + g(X_{\tau^*_n}) \Big\} - \ee^x \Big\{ \int_0^{\tau^*_n \wedge T} f_n(X_s) ds + g(X_{\tau^*_n \wedge T}) \Big\}\\
&\hspace{150pt}\le \|f_n\| \ee^x \{ (\tau^*_n - T)^+ \} + 2 \|g\| \prob^x \{ \tau^*_n > T \}\\[5pt]
&\hspace{150pt}\le (\|f\| + \|\bar f\|) \ee^x \{ (\tau^*_1 - T)^+ \} + 2 \|g\| \prob^x \{ \tau^*_1 > T \} =: H(T).
\end{align*}
This implies that $w_n(x) - w_{n,T}(x) \le H(T)$, where $w_{n,T}$ stands for the value function when stopping times are bounded by $T$. Notice that due to the integrability of $\tau^*_1$, we have $\lim_{T \to \infty} H(T) = 0$. Hence,
\begin{multline*}
w_n(x) - w(x)
= \big(w_n(x) - w_{n,T}(x) \big) + \big(w_{n,T}(x) - w_T(x) \big) + \big(w_T(x) - w(x)\big)\\
\le H(T) + \big(w_{n,T}(x) - w_T(x) \big)+ 0.
\end{multline*}
Pointwise convergence of $w_n$ to $w$ is established once we show that $w_{n,T}(x) - w_T(x)$ can be made arbitrarily small for every fixed $T$. By Lemma \ref{lem:compactness_of_Feller} for each $\ve > 0$ and a compact set $L \subset E$ there is  $N$ such that $\prob^x \big\{\forall t \in [0, T]\ X_t \in B_n \big\} \ge 1- \ve$ for $n \ge N$ and $x \in L$, i.e., the process stays in the ball $B_n$ over time $[0, T]$ with the probability at least $1-\ve$. Using the fact that $f_n$ coincides with $f$ on $B_n$, we have for every $n > N$
\[
0 \le w_{n, T}(x) - w_T(x) \le T \|f_n - f\|\, \prob^x \big\{\exists t \in [0, T]\ X_t \notin B_n \big\} \le T \big(\|f\| + \|\bar f\| \big)\ve.
\]
Summarising, for every $T > 0$, $\ve > 0$ and a compact set $L$ there is $N$ such that for all $n \ge N$ and $x \in L$ we have $w_n(x) - w(x) \le H(T) + T \big(\|f\| + \|\bar f\| \big)\ve$. This proves convergence of $w_n$ to $w$ uniform in $L$. Hence, the continuity of $w_n$ implies the continuity of $w$.

Take now an arbitrary continuous bounded function $f$ with $\mu(f) < 0$. Let $N$ be such that $\int_{B_N^c} |f(x)| \mu(dx) < -\mu(f)/4$. Define $\hat f(x) = z_N(x) f(x)$ and $\bar f = f \vee \hat f$. Then
\[
\mu(\bar f) \le \int_{B_N} f(x) \mu(dx) + \int_{B^c_N} |f(x)| \mu(dx) \le \mu(f) - \mu(f) / 4 - \mu(f) / 4 = \mu(f) / 2.
\]
Hence, $\mu(\bar f) < 0$. Moreover, $\bar f (x) \ge 0$ for $x \in B_{N+1}^c$, so the set $\{x\in E:\ \bar f(x) \le \mu(\bar f) \}$ is contained in $B_{N+1}$ and compact. The function $\bar f$ satisfies the conditions in the first part of the proof.
\end{proof}

\begin{remark}
The function $\bar f$ in the above proof satisfies conditions of Lemma \ref{lem:f_C0}. Hence, assumption \ref{ass:undis_large_dev} holds for $\bar f$. As it has been said earlier (see Remark \ref{rem:B1}), it also holds for any function dominated by $\bar f$, in particular, for $f$. Theorem \ref{thm:optimal_time_bounded_q} yields that $\tau^* = \inf\{ t \ge 0:\ g(X_t) \ge w(X_t) \}$ is optimal and has the expectation bounded by $M(x)$. Function $M(x)$ is bounded on compact sets because it is so for $\bar f$.
\end{remark}

\section{Functional with a general discount rate}\label{sec:general}
In the previous section we studied an optimal stopping problem without discounting. Here, we solve the problem with an arbitrary non-negative discount rate $r(x)$. The development will follow a similar line of thought as before but due to the presence of discounting many steps are more complicated.

For the sake of compactness of notation, we shall denote
\[
\alpha_t = \int_0^t r(X_s) ds.
\]
The optimal stopping problem \eqref{eqn:general} takes then the form
\begin{equation}\label{eqn:general_disc}
w(x) = \sup_{\tau} \limsup_{T \to \infty} \ee^x \Big\{ \int_0^{\tau \wedge T}  e^{-\alpha_s} f(X_s) ds  + e^{-\alpha_{\tau \wedge T}} g(X_{\tau \wedge T}) \Big\}.
\end{equation}
Standing assumptions for this section are \ref{ass:weak_feller} (weak Feller property), \ref{ass:continuity} (the continuity of $f, g$) and
\begin{assumption}
\item[(E1)]\label{ass:large_dev} For any $x \in E$, there is $d(x) < 0$ such that
\[
\gamma_r (x) = \sup_{\tau} \limsup_{T \to \infty} \ee^x \Big\{ \int_0^{\tau \wedge T} e^{-\alpha_s} \big(f(X_s) - d(x)\big) ds \Big\} < \infty,
\]
\item [(E2)]\label{ass:positive_discount} Function $r$ is continuous bounded and $r(x) \ge 0$ for all $x \in E$.
\end{assumption}
Similarly as in the undiscounted case, when assumption \ref{ass:large_dev} holds for a function $f$ it also holds for any function $f' \le f$ with the same function $d$.

\begin{lemma}\label{lem:general_disc_bound}
For any $x \in E$ and any integrable $\ve$-optimal stopping time $\sigma$ we have
\[
\ee^x \Big\{ \int_0^\sigma e^{-\alpha_s} ds \Big\} \le \frac{\gamma_r(x) + 2 \|g\| + \ve}{-d(x)}.
\]
The optimisation in \eqref{eqn:general_disc} can be constrained to stopping times $\sigma$ with
\[
\ee^x \Big\{ \int_0^\sigma e^{-\alpha_s} ds \Big\} \le M(x) := \frac{\gamma_r(x) + 2 \|g\| + 1}{-d(x)}
\]
and $w(x) \le \|f\| M(x) + \|g\| < \infty$.
\end{lemma}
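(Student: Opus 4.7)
The plan is to adapt the argument of Lemma~\ref{lem:bound_for_stop} from the undiscounted setting, replacing the expectation $\ee^x\{\sigma\}$ by the discounted occupation functional $\ee^x\int_0^\sigma e^{-\alpha_s}ds$. The crucial preliminary observation is that for any integrable stopping time $\sigma$, the $\limsup_{T\to\infty}$ appearing in the definitions of both $w(x)$ and $\gamma_r(x)$ collapses into an ordinary expectation over $\sigma$. Indeed, since $r\ge 0$ we have $e^{-\alpha_s}\le 1$, so the integrand $e^{-\alpha_s}(f(X_s)-d(x))$ is dominated by $\|f-d(x)\|$ and the terminal term $e^{-\alpha_{\sigma\wedge T}}g(X_{\sigma\wedge T})$ is uniformly bounded by $\|g\|$. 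Since $\sigma<\infty$ $\prob^x$-a.s., we have $\sigma\wedge T\to\sigma$ almost surely, and bounded/dominated convergence give
\[
\limsup_{T\to\infty}\ee^x\Big\{\int_0^{\sigma\wedge T}e^{-\alpha_s}f(X_s)\,ds + e^{-\alpha_{\sigma\wedge T}}g(X_{\sigma\wedge T})\Big\}
=\ee^x\Big\{\int_0^{\sigma}e^{-\alpha_s}f(X_s)\,ds + e^{-\alpha_{\sigma}}g(X_{\sigma})\Big\},
\]
and analogously for the functional defining $\gamma_r(x)$.

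With this in place, the first bound follows by mimicking Lemma~\ref{lem:bound_for_stop}. Adding and subtracting $d(x)$ inside the integral and applying the definition of $\gamma_r$ yields
\[
\ee^x\Big\{\int_0^\sigma e^{-\alpha_s}f(X_s)\,ds + e^{-\alpha_\sigma}g(X_\sigma)\Big\}
\le \gamma_r(x) + d(x)\,\ee^x\!\int_0^\sigma e^{-\alpha_s}\,ds + \|g\|.
\]
On the other hand, $\varepsilon$-optimality of $\sigma$ combined with the trivial estimate $w(x)\ge g(x)\ge -\|g\|$ (take $\tau\equiv 0$) produces the lower bound $-\|g\|-\varepsilon$ for the left-hand side. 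Rearranging and dividing by $-d(x)>0$ gives the first claim.

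To justify restricting the optimisation, I exploit the structure $w(x)=\sup_\tau\limsup_T\Psi(\tau,T)$: for each $\varepsilon\in(0,1)$ one selects $\tau$ with $\limsup_T\Psi(\tau,T)\ge w(x)-\varepsilon/2$, then $T$ with $\Psi(\tau,T)\ge w(x)-\varepsilon$, and sets $\sigma:=\tau\wedge T$. This $\sigma$ is bounded, hence integrable, and $\varepsilon$-optimal, so the first part of the lemma delivers
\[
\ee^x\!\int_0^\sigma e^{-\alpha_s}\,ds \le \frac{\gamma_r(x)+2\|g\|+\varepsilon}{-d(x)} \le M(x).
\]
Thus the supremum in \eqref{eqn:general_disc} is approached by stopping times satisfying the $M(x)$ constraint, establishing the second assertion.

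Finally, for any such constrained $\sigma$ one has the crude estimate
\[
\ee^x\Big\{\int_0^\sigma e^{-\alpha_s}f(X_s)\,ds + e^{-\alpha_\sigma}g(X_\sigma)\Big\}
\le \|f\|\,\ee^x\!\int_0^\sigma e^{-\alpha_s}\,ds + \|g\|
\le \|f\|\,M(x)+\|g\|,
\]
which, after taking the supremum, gives $w(x)\le\|f\|M(x)+\|g\|<\infty$, with finiteness guaranteed by assumption~\ref{ass:large_dev}. The only mildly subtle point in the whole argument is the very first step --- promoting the $\limsup_T$ into a genuine expectation under bare integrability of $\sigma$ --- but the non-negativity of $r$ makes all integrands dominated by integrable envelopes, so standard convergence theorems suffice.
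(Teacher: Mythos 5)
Your proof is correct and follows essentially the same route as the paper, which simply adapts the argument of Lemma \ref{lem:bound_for_stop} by replacing $\ee^x\{\sigma\}$ with $\ee^x\{\int_0^\sigma e^{-\alpha_s}\,ds\}$; the decomposition via $d(x)$, the two-sided bound using $\ve$-optimality and $w(x)\ge -\|g\|$, and the reduction to bounded stopping times are all as intended. Your explicit justification that the $\limsup_T$ collapses to a plain expectation for integrable $\sigma$ (the discounted analogue of Remark \ref{rem:integrable_tau}) is a welcome extra detail the paper leaves implicit.
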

The proof is similar to that of Lemma \ref{lem:bound_for_stop}.

Define a value function for the stopping problem with a finite horizon:
\[
w_T(x) = \sup_{\tau \le T} \ee^x \Big\{ \int_0^{\tau} e^{-\alpha_s} f(X_s)ds + e^{-\alpha_\tau} g(X_{\tau})\Big\}.
\]
\begin{lemma}\label{lem:disc_semicont}
Functions $w_T$ are continuous and the value function $w$ is lower semicontinuous.
\end{lemma}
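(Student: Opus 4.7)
The plan is to follow the structure of the proof of Lemma \ref{lem:lsc_bounded_q} from the undiscounted setting and adapt it to accommodate the discount factor $e^{-\alpha_s}$.

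For the continuity of $w_T$, I would enlarge the state space by introducing the augmented process $Y_t = (X_t, \alpha_t)$ taking values in the locally compact separable space $E \times \erp$. Since $r$ is continuous and bounded by \ref{ass:positive_discount}, the additive functional $\alpha_t$ depends continuously on the initial data and $(Y_t)$ inherits the weak Feller property from $(X_t)$. The functions
\[
\tilde f(x,a) = e^{-a} f(x), \qquad \tilde g(x,a) = e^{-a} g(x)
\]
are continuous and bounded on $E \times \erp$, and the finite-horizon problem can be rewritten as
\[
w_T(x) = \sup_{\tau \le T} \ee^{(x,0)} \Big\{ \int_0^{\tau} \tilde f(Y_s)\, ds + \tilde g(Y_{\tau}) \Big\}.
\]
Applying \cite[Corollary 3.6]{Palczewski2008} to this standard finite-horizon stopping problem on the augmented state space yields continuity of the right-hand side in the initial state $(x,0)$, hence continuity of $w_T$ on $E$.

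For the lower semicontinuity of $w$, I would show that $w = \lim_{T \to \infty} w_T$ pointwise and then conclude, since an increasing pointwise limit of continuous functions is lower semicontinuous. Monotonicity of $(w_T)_T$ in $T$ is immediate because enlarging the bound on admissible stopping times only enlarges the class over which the supremum is taken. For the identity, the inequality $\lim_T w_T \le w$ is trivial from the definition of $w$. For the reverse inequality, observe that for any stopping time $\tau$ the random time $\tau \wedge T$ is bounded by $T$, so
\[
\ee^x \Big\{ \int_0^{\tau \wedge T} e^{-\alpha_s} f(X_s)\, ds + e^{-\alpha_{\tau \wedge T}} g(X_{\tau \wedge T}) \Big\} \le w_T(x).
\]
Passing to $\limsup_T$ on the left and then taking the supremum over $\tau$ gives $w(x) \le \lim_T w_T(x)$.

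The only non-routine step is verifying that $(X_t, \alpha_t)$ satisfies the weak Feller property at the level of precision required by \cite[Corollary 3.6]{Palczewski2008}. This follows from \ref{ass:weak_feller} combined with continuity and boundedness of $r$: the map $(x,a) \mapsto \ee^{x}\{\phi(X_t, a + \alpha_t)\}$ is continuous by the weak Feller property of $X$ together with bounded convergence, and vanishing at infinity in the $a$-coordinate is automatic since $a + \alpha_t \to \infty$ as $a \to \infty$. This is the main, though still standard, obstacle of the argument.
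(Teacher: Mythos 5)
Your overall strategy (augment the state with the accumulated discount, reduce to an undiscounted finite-horizon problem, then get lower semicontinuity of $w$ as an increasing limit of the continuous $w_T$) is viable and is genuinely different from the paper's route: the paper simply invokes the fact that the \emph{discounted} semigroup $P^r_t \phi(x) = \ee^x \{ e^{-\alpha_t} \phi(X_t) \}$ maps continuous bounded functions into continuous bounded functions, citing Gikhman--Skorokhod (Ch.~II, Sec.~5, Lemma~4), after which the continuity of $w_T$ follows as in the undiscounted case. The second half of your argument (monotonicity of $T \mapsto w_T$ and the identity $w = \lim_T w_T$) is correct and is exactly what the paper does.

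The gap is in the one step you yourself flag as the ``main obstacle'': the weak Feller property of $Y_t = (X_t, a+\alpha_t)$ does \emph{not} follow from \ref{ass:weak_feller} ``together with bounded convergence.'' Assumption \ref{ass:weak_feller} controls only the one-dimensional marginals, i.e.\ continuity of $x \mapsto \ee^x\{\psi(X_t)\}$ for $\psi \in \mathcal{C}_0(E)$; but $\phi(X_t, a+\alpha_t)$ depends on the whole trajectory through $\alpha_t = \int_0^t r(X_u)\,du$, so continuity in $x$ of $\ee^x\{\phi(X_t, a+\alpha_t)\}$ is a statement about continuity of the law of the \emph{path} in the initial point, which is strictly more than \ref{ass:weak_feller} asserts and which bounded convergence cannot supply. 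This is precisely the point at which an actual argument is needed: either cite the Gikhman--Skorokhod lemma the paper uses (whose proof runs through the Duhamel/perturbation identity $P^r_t\phi = P_t\phi - \int_0^t P_s\big(r\,P^r_{t-s}\phi\big)\,ds$, from which continuity of each iterate, hence of the limit, follows from \ref{ass:weak_feller} and the continuity and boundedness of $r$), or establish weak convergence of $\prob^{x_n}$ to $\prob^{x}$ on the Skorokhod space and then use that $\omega \mapsto \int_0^t r(\omega(u))\,du$ is a bounded continuous functional there. With either of these inserted, your augmentation argument closes (your treatment of the vanishing-at-infinity requirement is fine: on $\{a \le A\}$ one dominates $|\phi(X_t, a+\alpha_t)|$ by $\sup_{b \in [a, a+t\|r\|]}|\phi(\cdot,b)| \in \mathcal{C}_0(E)$ and applies \ref{ass:weak_feller}); without it, the appeal to \cite[Corollary 3.6]{Palczewski2008} on the augmented space is unsupported.
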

\begin{proof}
The discounted semigroup $P^r_t \phi(x) = \ee^x \{ e^{-\int_0^t r(X_u) du} \phi(X_t) \}$ maps continuous bounded functions into continuous bounded functions, \cite[Chapter II, Section 5, Lemma 4]{Gikhman1975}. This implies the continuity of $w_T$.  As $w$ can be approximated pointwise from below by continuous functions $w_T$ it is itself lower semicontinuous.
\end{proof}
Hence, the random variable (using the convention that $\inf \emptyset = \infty$)
\[
\tau_\ve = \inf \{t \ge 0:\ g(X_t) \ge w(X_t) - \ve \}
\]
is a stopping time. On the set $\{\tau_\ve < \infty\}$, due to the continuity of $g$ and lower semicontinuity of $w$ as well as the right-continuity of $(X_t)$ we have
\[
g(X_{\tau_\ve}) \ge w(X_{\tau_\ve}) - \ve.
\]
It is not possible to determine without further assumptions whether $\tau_\ve$ admits only finite values. The following example explains why.
\begin{example}
Consider an optimal stopping problem $w(x) = \sup_{\tau} \limsup_{T \to \infty} \ee^x \{ e^{-(\tau \wedge T)r} g (X_{\tau \wedge T}) \}$ for $r >0$ and $g \equiv -1$. It is easy to see that there is a unique optimal stopping time $\tau^* = \infty$ and the value function $w(x) = 0$. However, for any $\ve \in (0,1)$ the set $\{ t:\ g(X_t) \ge w(X_t) - \ve \}$ is empty, so $\tau_\ve = \infty$.
\end{example}

This example suggests that although it cannot be guaranteed that $\tau_\ve$ is finite-valued it can still be $\ve$-optimal for $w$. This property is shared by the class of optimal stopping problems discussed in this section. The following technical lemmas provide us with tools required to demonstrate it. Proofs that are very similar to their counterparts in Section \ref{sec:no_disc} are omitted.

\begin{lemma}
For every $x \in E$, there exists a non-decreasing sequence $\sigma_m$ of bounded $\frac1m$-optimal stopping times for $w(x)$.
\end{lemma}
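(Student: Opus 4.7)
The plan is to mirror the proof of Lemma \ref{lem:undisc_sigma_m} from the undiscounted case, exploiting the monotone approximation of $w$ by finite-horizon value functions $w_T$ and the existence of optimal stopping times for each $w_T$.

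First I would establish that $w_T(x) \nearrow w(x)$ as $T \to \infty$. Monotonicity of $w_T$ in $T$ is immediate: for $T_1 \le T_2$, every stopping time $\tau \le T_1$ is also admissible in the definition of $w_{T_2}$, and on such a stopping time the functional in the two problems coincides. Convergence follows from two inequalities. Plainly $w_T(x) \le w(x)$ because any $\tau \le T$ satisfies $\tau \wedge S = \tau$ for $S \ge T$. Conversely, for any stopping time $\tau$ admissible for $w(x)$ and any $T$, the truncated stopping time $\tau \wedge T$ is bounded by $T$, so
\[
\ee^x \Big\{ \int_0^{\tau \wedge T} e^{-\alpha_s} f(X_s) ds + e^{-\alpha_{\tau \wedge T}} g(X_{\tau \wedge T}) \Big\} \le w_T(x).
\]
Passing to $\limsup_{T\to\infty}$ on the left and $\lim_{T\to\infty}$ on the right (the latter exists by monotonicity), and then taking supremum over $\tau$, gives $w(x) \le \lim_{T \to \infty} w_T(x)$.

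Next, I would invoke the analogue of \cite[Corollary 3.6]{Palczewski2008} adapted to the discounted semigroup $P^r_t$ (whose Feller property was already used in Lemma \ref{lem:disc_semicont}) to obtain, for each $T>0$, an optimal stopping time for $w_T$ of the form
\[
\tau_T = \inf\{ t \in [0, T]:\ g(X_t) \ge w_{T-t}(X_t) \}.
\]
Since $w_T$ is non-decreasing in $T$, the function $t \mapsto w_{T-t}$ is pointwise non-decreasing in $T$ for each fixed $t$, so the set on which the stopping condition triggers shrinks as $T$ grows. Consequently $T \mapsto \tau_T$ is non-decreasing.

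Finally, using the convergence established in the first step, the quantity
\[
T(m) = \inf\Big\{ T \ge 0:\ w(x) - w_T(x) \le \tfrac{1}{m} \Big\}
\]
is finite for every $m \in \en$, and $T(m)$ can be chosen non-decreasing in $m$. Setting $\sigma_m = \tau_{T(m)}$, the sequence $(\sigma_m)$ is bounded (by $T(m)$), non-decreasing, and $\frac1m$-optimal for $w(x)$ because
\[
\ee^x \Big\{ \int_0^{\sigma_m} e^{-\alpha_s} f(X_s) ds + e^{-\alpha_{\sigma_m}} g(X_{\sigma_m}) \Big\} = w_{T(m)}(x) \ge w(x) - \tfrac{1}{m}.
\]
The only delicate point is the existence of an optimal stopping time for the finite-horizon problem with the discounted semigroup $P^r_t$; this should be entirely analogous to the undiscounted case since $r$ is assumed continuous bounded and non-negative (assumption \ref{ass:positive_discount}), and the required weak Feller property of $P^r_t$ holds.
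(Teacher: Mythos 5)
Your proposal is correct and follows essentially the same route as the paper: the paper omits this proof explicitly because it is "very similar" to Lemma \ref{lem:undisc_sigma_m}, whose argument (monotone approximation by $w_T$, finite-horizon optimal times $\tau_T$ from \cite[Corollary 3.6]{Palczewski2008} adapted to the discounted semigroup $P^r_t$, monotonicity of $\tau_T$ in $T$, and $\sigma_m = \tau_{T(m)}$) is exactly what you reproduce. Your additional verification that $w_T \nearrow w$ and your remark on the finite-horizon existence for $P^r_t$ fill in precisely the details the paper leaves implicit.
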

In what follows we shall refer by $(\sigma_m)$ to the sequence of stopping times from the above lemma.

\begin{lemma}\label{lem:gen_disc_snell}\label{lem:general_super_mart}
Assume only \ref{ass:positive_discount}. Then:
\begin{itemize}
\item The process $Z_t := \int_0^t e^{-\alpha_s} f(X_s) ds + e^{-\alpha_t} w(X_t)$ is a right-continuous $\prob^x$-supermar\-tingale for any $x \in E$.
\item For a bounded stopping time $\sigma$ and an arbitrary stopping time $\tau$
\begin{multline}\label{eqn:gen_disc_bellman_ineq}
\ee^{x} \Big \{ \int_0^\sigma e^{-\alpha_s} f(X_s) ds + e^{-\alpha_\sigma} g(X_\sigma) \Big\}\\
\le \ee^{x} \Big\{ \int_0^{\sigma\wedge \tau} e^{-\alpha_s} f(X_s) ds + \ind{\sigma < \tau} e^{-\alpha_\sigma} g(X_{\sigma}) + \ind{\sigma \ge \tau} e^{-\alpha_\tau} w(X_{\tau}) \Big\}.
\end{multline}
\item For a bounded stopping time $\sigma$
\begin{equation}\label{eqn:gen_disc_bellman_ineq2}
\ee^{x} \Big \{ \int_0^\sigma e^{-\alpha_s} f(X_s) ds + e^{-\alpha_\sigma} w(X_\sigma) \Big\} \le w(x).
\end{equation}
\end{itemize}
\end{lemma}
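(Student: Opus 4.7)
The plan is to transcribe the argument for Lemma \ref{lem:super_mart} while tracking the discount factor $e^{-\alpha_t}$ throughout. For each $T > 0$ introduce the auxiliary process
\[
Z_t^T := \int_0^t e^{-\alpha_s} f(X_s)\,ds + e^{-\alpha_t} w_{T-t}(X_t), \qquad t \in [0, T].
\]
A standard application of the strong Markov property (pulling $e^{-\alpha_t}$ and the integral up to $t$ out of the relevant conditional expectation) identifies $Z_t^T$ as the Snell envelope, under $\prob^x$, of the right-continuous reward process $t \mapsto \int_0^t e^{-\alpha_s} f(X_s)\,ds + e^{-\alpha_t} g(X_t)$ on $[0,T]$. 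By \cite[Theorem 2.4]{Peskir2006} it is therefore a right-continuous supermartingale; continuity of $w_{T-t}$, proved in Lemma \ref{lem:disc_semicont}, supplies the regularity of the reward process that the cited theorem requires. Since $w_T$ is monotone non-decreasing in $T$, so is $Z_t^T$, and by monotone convergence $Z_t = \sup_{T \ge t} Z_t^T$ $\prob^x$-a.s. Theorem T16 in \cite{Meyer1966} and the remark following it (increasing suprema of right-continuous supermartingales) then give that $(Z_t)$ itself is a right-continuous supermartingale, establishing the first bullet.

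For \eqref{eqn:gen_disc_bellman_ineq2}, optional sampling at the bounded stopping time $\sigma$ yields $\ee^x\{Z_\sigma\} \le \ee^x\{Z_0\} = w(x)$, which is precisely the desired inequality. For \eqref{eqn:gen_disc_bellman_ineq}, I apply optional sampling between the two bounded stopping times $\tau \wedge \sigma \le \sigma$ to obtain
\[
\int_0^{\tau \wedge \sigma} e^{-\alpha_s} f(X_s)\,ds + e^{-\alpha_{\tau \wedge \sigma}} w(X_{\tau \wedge \sigma}) \ge \ee^{x}\Big\{ \int_0^{\sigma} e^{-\alpha_s} f(X_s)\,ds + e^{-\alpha_\sigma} w(X_\sigma) \,\Big|\, \ef_{\tau \wedge \sigma}\Big\}.
\]
The event $\{\sigma < \tau\}$ lies in $\ef_{\tau \wedge \sigma}$, so after splitting the $w$-terms on both sides according to $\ind{\sigma < \tau} + \ind{\sigma \ge \tau}$ the common summand $\ind{\sigma < \tau} e^{-\alpha_\sigma} w(X_\sigma)$ cancels. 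In the remaining right-hand conditional expectation I replace $w(X_\sigma)$ by $g(X_\sigma)$ using $w \ge g$ (which is immediate from the admissibility of $\tau = 0$ in \eqref{eqn:general_disc}). Finally, adding $\ind{\sigma < \tau} e^{-\alpha_\sigma} g(X_\sigma)$ to both sides and taking unconditional expectations produces \eqref{eqn:gen_disc_bellman_ineq}.

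The main technical point I expect to have to verify is the integrability needed for optional sampling: $w(X_t)$ is only controlled pointwise through Lemma \ref{lem:general_disc_bound}, so I must check $\ee^x\{|Z_\sigma|\} < \infty$ for every bounded $\sigma$. This reduces to the boundedness of $\int_0^\sigma e^{-\alpha_s} f(X_s)\,ds$ (trivial for bounded $\sigma$, since $f$ is bounded) combined with the two-sided bound $-\|g\| \le w \le \|f\|M(x) + \|g\|$ and the non-negativity of $r$, which makes $e^{-\alpha_t} \le 1$ so that the negative part of $e^{-\alpha_t} w(X_t)$ is dominated by $\|g\|$. Once this integrability is in place, the entire argument is effectively a verbatim transcription, with discount factors inserted, of the corresponding steps in Section \ref{sec:no_disc}.
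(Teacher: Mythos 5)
Your proposal is correct and follows essentially the same route as the paper: the authors simply remark that the first two assertions are proved as in Lemma \ref{lem:undisc_snell} (Snell envelopes $Z_t^T$, monotonicity in $T$, Meyer's T16, then optional sampling with the cancellation of the $\ind{\sigma<\tau}$-term and the bound $w\ge g$), and obtain \eqref{eqn:gen_disc_bellman_ineq2} from \eqref{eqn:gen_disc_bellman_ineq} with $\tau\equiv 0$, which is equivalent to your direct optional-sampling derivation. Your explicit check of integrability via $e^{-\alpha_t}\le 1$ and the two-sided bounds on $w$ is a welcome addition that the paper leaves implicit.
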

\begin{proof}
First two statements are proved in a similar way as Lemma \ref{lem:undisc_snell}. Inequality \eqref{eqn:gen_disc_bellman_ineq2} follows from \eqref{eqn:gen_disc_bellman_ineq} by taking $\tau \equiv 0$.
\end{proof}

\begin{lemma}\label{lem:gen_disc_tech}
$\ $
\begin{enumerate}
\item $\displaystyle \ee^x \big\{ \ind{\sigma_m < \tau_\ve} e^{-\alpha_{\sigma_m}}\big\} \le \frac{1}{m \ve}.$
\item $\displaystyle \lim_{m \to \infty} \ee^x \big\{ \ind{\tau_\ve < \infty} \ind{\sigma_m \ge \tau_\ve} e^{-\alpha_{\tau_\ve}} \big\} = \ee^x \big\{ \ind{\tau_\ve < \infty } e^{-\alpha_{\tau_\ve}} \big\}.$
\end{enumerate}
\end{lemma}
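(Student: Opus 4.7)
The plan is to mimic closely the undiscounted proof of Lemma \ref{lem:undisc_tech} for part (1), but with $e^{-\alpha_s}$ attached to every reward and with \eqref{eqn:undisc_bellman_ineq} replaced by its discounted counterpart \eqref{eqn:gen_disc_bellman_ineq} from Lemma \ref{lem:gen_disc_snell}; part (2) will then fall out of part (1) using only the monotonicity of $\alpha_t$.

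For part (1), I would start from the $\tfrac1m$-optimality of $\sigma_m$,
\[
w(x) - \tfrac1m \le \ee^x \Big\{ \int_0^{\sigma_m} e^{-\alpha_s} f(X_s) ds + e^{-\alpha_{\sigma_m}} g(X_{\sigma_m}) \Big\},
\]
and then apply the discounted Snell inequality \eqref{eqn:gen_disc_bellman_ineq} with the bounded stopping time $\sigma = \sigma_m$ and the (possibly infinite) stopping time $\tau = \tau_\ve$. On the event $\{\sigma_m < \tau_\ve\}$, the definition of $\tau_\ve$ gives $g(X_{\sigma_m}) < w(X_{\sigma_m}) - \ve$; isolating the factor $\ve$ and reassembling the remaining terms into an integral up to $\sigma_m \wedge \tau_\ve$ with terminal value $w$ at $\sigma_m \wedge \tau_\ve$, one obtains
\[
w(x) - \tfrac1m \le \ee^x \Big\{ \int_0^{\sigma_m \wedge \tau_\ve} e^{-\alpha_s} f(X_s) ds + e^{-\alpha_{\sigma_m \wedge \tau_\ve}} w(X_{\sigma_m \wedge \tau_\ve}) \Big\} - \ve\, \ee^x \{ \ind{\sigma_m < \tau_\ve} e^{-\alpha_{\sigma_m}} \}.
\]
Since $\sigma_m \wedge \tau_\ve$ is bounded (by $\sigma_m$), the bracket on the right is at most $w(x)$ by \eqref{eqn:gen_disc_bellman_ineq2}, and the inequality (1) drops out immediately after rearrangement.

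For part (2), the key observation is that $\alpha_t$ is non-decreasing because $r \ge 0$, so on $\{\sigma_m \le \tau_\ve\}$ one has $e^{-\alpha_{\tau_\ve}} \le e^{-\alpha_{\sigma_m}}$. Combined with the identity
\[
\ee^x \{ \ind{\tau_\ve < \infty} e^{-\alpha_{\tau_\ve}} \} - \ee^x \{ \ind{\tau_\ve < \infty}\ind{\sigma_m \ge \tau_\ve} e^{-\alpha_{\tau_\ve}} \} = \ee^x \{ \ind{\tau_\ve < \infty}\ind{\sigma_m < \tau_\ve} e^{-\alpha_{\tau_\ve}} \},
\]
this yields the bound $\ee^x \{ \ind{\sigma_m < \tau_\ve} e^{-\alpha_{\sigma_m}} \} \le \tfrac{1}{m \ve}$ on the right-hand side by part (1); sending $m \to \infty$ gives (2).

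I do not anticipate any serious obstacle. The only points to handle with some care are that $\tau_\ve$ can take the value $+\infty$ (so inequality \eqref{eqn:gen_disc_bellman_ineq} must be used in the form allowing an arbitrary stopping time for $\tau$, which it does) and that the terms $\ind{\sigma_m \ge \tau_\ve} e^{-\alpha_{\sigma_m \wedge \tau_\ve}} w(X_{\sigma_m \wedge \tau_\ve}) = \ind{\sigma_m \ge \tau_\ve} e^{-\alpha_{\tau_\ve}} w(X_{\tau_\ve})$ (which is fine since $\{\sigma_m \ge \tau_\ve\} \subset \{\tau_\ve < \infty\}$ as $\sigma_m$ is bounded). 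Everything else is a direct transcription of the undiscounted argument.
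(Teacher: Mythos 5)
Your proposal is correct and follows essentially the same route as the paper: part (1) via the discounted Snell inequality \eqref{eqn:gen_disc_bellman_ineq} applied to $\sigma_m$ and $\tau_\ve$, the strict inequality $g(X_{\sigma_m}) < w(X_{\sigma_m}) - \ve$ on $\{\sigma_m < \tau_\ve\}$, and the bound \eqref{eqn:gen_disc_bellman_ineq2}; part (2) by splitting on $\{\sigma_m < \tau_\ve\}$ versus $\{\sigma_m \ge \tau_\ve\}$ and using monotonicity of $\alpha_t$ together with part (1). The only cosmetic slip is writing $\{\sigma_m \le \tau_\ve\}$ where the relevant event is $\{\sigma_m < \tau_\ve\}$, which does not affect the argument.
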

\begin{proof}
We follow similar lines as in the proof of Lemma \ref{lem:undisc_tech}. Apply \eqref{eqn:gen_disc_bellman_ineq} to $\sigma_m$ and $\tau_\ve$
\begin{align*}
w(x) - \frac1m
&\le \ee^{x} \Big\{ \int_0^{\sigma_m\wedge \tau_\ve} e^{-\alpha_s} f(X_s) ds + \ind{\sigma_m < \tau_\ve} e^{-\alpha_{\sigma_m}} g(X_{\sigma_m}) + \ind{\sigma_m \ge \tau_\ve} e^{-\alpha_{\tau_\ve}} w(X_{\tau_\ve}) \Big\}\\
&\le  \ee^{x} \Big\{ \int_0^{\sigma_m\wedge \tau_\ve} e^{-\alpha_s} f(X_s) ds + \ind{\sigma_m < \tau_\ve} e^{-\alpha_{\sigma_m}} \big( w(X_{\sigma_m}) - \ve \big) + \ind{\sigma_m \ge \tau_\ve} e^{-\alpha_{\tau_\ve}} w(X_{\tau_\ve})\Big\}\\
&\le  \ee^{x} \Big\{ \int_0^{\sigma_m\wedge \tau_\ve} e^{-\alpha_s} f(X_s) ds + e^{-\alpha_{\sigma_m \wedge \tau_\ve}} w(X_{\sigma_m \wedge \tau_\ve}) \Big\} - \ve \ee^x \big\{\ind{\sigma_m < \tau_\ve} e^{-\alpha_{\sigma_m}} \big\} \\
&\le w(x) - \ve \ee^x \big\{\ind{\sigma_m < \tau_\ve} e^{-\alpha_{\sigma_m}} \big\},
\end{align*}
where the second inequality follows from the observation that $g(X_{\sigma_m}) < w(X_{\sigma_m}) - \ve$ on $\{ \sigma_m < \tau_\ve \}$ and the last inequality is the result of \eqref{eqn:gen_disc_bellman_ineq2}. This proves the first statement of the lemma.

The second statement is a consequence of the first one. Write
\[
\ee^x \big\{ \ind{\tau_\ve < \infty } e^{-\alpha_{\tau_\ve}} \big\} = \ee^x \big\{ \ind{\tau_\ve < \infty } \ind{\sigma_m < \tau_\ve} e^{-\alpha_{\tau_\ve}} \big\} + \ee^x \big\{ \ind{\tau_\ve < \infty } \ind{\sigma_m \ge \tau_\ve} e^{-\alpha_{\tau_\ve}} \big\}.
\]
Recall that $\alpha_s$ is non-decreasing:
\[
\ee^x \big\{ \ind{\tau_\ve < \infty } \ind{\sigma_m < \tau_\ve} e^{-\alpha_{\tau_\ve}} \big\}
\le \ee^x \big\{ \ind{\tau_\ve < \infty } \ind{\sigma_m < \tau_\ve} e^{-\alpha_{\sigma_m}} \big\} \to 0
\]
as $m \to \infty$ by the first assertion of the lemma.
\end{proof}

The following lemma unveils an important relation between $\tau_\ve$ and $\sigma_m$.

\begin{lemma}\label{lem:disc_relation_sigma_m}
Under the standing assumptions of this section
\[
\lim_{m \to \infty} (\sigma_m \wedge \tau_\ve) = \tau_\ve.
\]
\end{lemma}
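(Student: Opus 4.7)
The plan is to identify the limit using the monotonicity of the sequence, then rule out the undesired case by means of Lemma \ref{lem:gen_disc_tech}(1). Since $(\sigma_m)$ is non-decreasing, so is $(\sigma_m \wedge \tau_\ve)$, and this sequence is bounded above by $\tau_\ve$. Define $\sigma_\infty = \lim_{m\to\infty} \sigma_m \in [0,\infty]$, so that $\lim_{m\to\infty} (\sigma_m \wedge \tau_\ve) = \sigma_\infty \wedge \tau_\ve \le \tau_\ve$. It therefore suffices to establish that $\sigma_\infty \ge \tau_\ve$ almost surely.

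I split the sample space along the event $B = \bigcap_m \{\sigma_m < \tau_\ve\}$. On $B^c$, there exists some $m_0(\omega)$ with $\sigma_{m_0} \ge \tau_\ve$; then for all $m \ge m_0$ one has $\sigma_m \wedge \tau_\ve = \tau_\ve$, and there is nothing to prove. The remaining task is to show that on $B$ we in fact have $\sigma_\infty = \infty$ almost surely, which forces $\tau_\ve = \infty$ on $B$ (since $\tau_\ve > \sigma_m$ for every $m$ implies $\tau_\ve \ge \sigma_\infty$), and consequently $\sigma_m \wedge \tau_\ve = \sigma_m \to \infty = \tau_\ve$.

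To prove that $\sigma_\infty = \infty$ a.s.\ on $B$, I would appeal to Lemma \ref{lem:gen_disc_tech}(1), which gives $\ee^x\{\ind{\sigma_m < \tau_\ve} e^{-\alpha_{\sigma_m}}\} \le \tfrac{1}{m\ve} \to 0$. Because $\{\sigma_m < \tau_\ve\}$ is non-increasing in $m$ (as $\sigma_m$ is non-decreasing) and $\alpha$ is non-decreasing and continuous (since $r$ is bounded and continuous), the integrand is dominated by $1$ and decreases pointwise to $\ind{B}\, e^{-\alpha_{\sigma_\infty}}$. Dominated convergence then yields
\[
\ee^x\bigl\{ \ind{B}\, e^{-\alpha_{\sigma_\infty}} \bigr\} = 0.
\]
Since $r$ is bounded, $\alpha_{\sigma_\infty} < \infty$ whenever $\sigma_\infty < \infty$, hence $e^{-\alpha_{\sigma_\infty}} > 0$ on $\{\sigma_\infty < \infty\}$. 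Consequently $\prob^x(B \cap \{\sigma_\infty < \infty\}) = 0$, which is exactly what was needed.

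The only subtlety I anticipate is the passage to the limit inside the expectation: one must verify that $\alpha_{\sigma_m} \to \alpha_{\sigma_\infty}$ even when $\sigma_\infty = \infty$, but this is immediate from the continuity of $t \mapsto \alpha_t$ (and the convention $\alpha_\infty = \lim_{t\to\infty}\alpha_t$, possibly $+\infty$, in which case $e^{-\alpha_{\sigma_\infty}} = 0$ and the conclusion on $B$ is trivially consistent with $\sigma_\infty = \infty$).
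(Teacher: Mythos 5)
Your proof is correct and follows essentially the same route as the paper: both arguments pass to the limit in the bound $\ee^x\{\ind{\sigma_m<\tau_\ve}e^{-\alpha_{\sigma_m}}\}\le \frac{1}{m\ve}$ from Lemma \ref{lem:gen_disc_tech}(1) via dominated convergence, and both use the boundedness of $r$ to conclude that $e^{-\alpha_{\sigma_\infty}}>0$ on $\{\sigma_\infty<\infty\}$, forcing $\sigma_\infty\ge\tau_\ve$ almost surely. Your decomposition along $B=\bigcap_m\{\sigma_m<\tau_\ve\}$ rather than along $\{\sigma_\infty<\infty\}$ is only a cosmetic difference.
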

\begin{proof}
Stopping times $\sigma_m$ are non-decreasing, hence the limit $\sigma_\infty = \lim_{m \to \infty} \sigma_m$ exists (and is possibly infinite). From Lemma \ref{lem:gen_disc_tech}, by the dominated convergence theorem
\[
0 = \ee^x \big\{ \lim_{m \to \infty} \ind{\sigma_m < \tau_\ve} e^{-\alpha_{\sigma_m}}\big\} \ge \ee^x \big\{ \lim_{m \to \infty} \ind{\sigma_m < \tau_\ve} e^{-\|r\| {\sigma_m}}\big\}.
\]
Hence, on $\{ \sigma_\infty < \infty \}$ we have $\lim_{m \to \infty} \ind{\sigma_m < \tau_\ve} = 0$, which implies $\sigma_\infty \ge \tau_\ve$. Obviously on $\{\sigma_\infty = \infty \}$ we have $\sigma_\infty \ge \tau_\ve$.
\end{proof}

We are now in a position to prove the main result of this section that $\tau_\ve$ is an $\ve$-optimal stopping time for $w$. This proof does not require $\tau_\ve$ to be finite.

\begin{thm}\label{thm:general_disc_optimality}
Under the standing assumptions of this section the stopping time $\tau_\ve$ is $\ve$-optimal for $w$.
\end{thm}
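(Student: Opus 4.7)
The plan is to apply the Bellman-type inequality \eqref{eqn:gen_disc_bellman_ineq} with the bounded $1/m$-optimal stopping times $\sigma_m$ and $\tau = \tau_\ve$, exploit the defining property $g(X_{\tau_\ve}) \ge w(X_{\tau_\ve}) - \ve$ on $\{\tau_\ve < \infty\}$, and then pass to the limit in $m$ using the machinery of Lemmas \ref{lem:gen_disc_tech} and \ref{lem:disc_relation_sigma_m}. Concretely, I would begin with
\[
w(x) - \tfrac{1}{m} \le \ee^x\Big\{\int_0^{\sigma_m \wedge \tau_\ve} e^{-\alpha_s} f(X_s)\,ds + \ind{\sigma_m < \tau_\ve} e^{-\alpha_{\sigma_m}} g(X_{\sigma_m}) + \ind{\sigma_m \ge \tau_\ve} e^{-\alpha_{\tau_\ve}} w(X_{\tau_\ve})\Big\}.
\]
On $\{\sigma_m \ge \tau_\ve\}$ the boundedness of $\sigma_m$ forces $\tau_\ve < \infty$, so $w(X_{\tau_\ve}) \le g(X_{\tau_\ve}) + \ve$, and one can replace $w(X_{\tau_\ve})$ by $g(X_{\tau_\ve})$ at the price of an additional $-\ve$ on the left.

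Before letting $m \to \infty$, I would extract the central a priori bound $\ee^x\{\int_0^{\tau_\ve} e^{-\alpha_s}\,ds\} \le M(x) < \infty$ by combining $\sigma_m \wedge \tau_\ve \uparrow \tau_\ve$ (Lemma \ref{lem:disc_relation_sigma_m}) and monotone convergence with the uniform bound $\ee^x\{\int_0^{\sigma_m} e^{-\alpha_s}\,ds\} \le M(x)$ from Lemma \ref{lem:general_disc_bound}. This supplies the integrable dominant $\|f\|\int_0^{\tau_\ve} e^{-\alpha_s}\,ds$, so that dominated convergence handles the integral contribution; the middle term vanishes by the first assertion of Lemma \ref{lem:gen_disc_tech}; and the last term converges to $\ee^x\{\ind{\tau_\ve < \infty} e^{-\alpha_{\tau_\ve}} g(X_{\tau_\ve})\}$ by the second assertion of the same lemma together with the boundedness of $g$. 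Putting these together produces
\[
w(x) - \ve \le \ee^x\Big\{\int_0^{\tau_\ve} e^{-\alpha_s} f(X_s)\,ds + \ind{\tau_\ve < \infty} e^{-\alpha_{\tau_\ve}} g(X_{\tau_\ve})\Big\}.
\]

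The remaining task is to show that this right-hand side is bounded above by the $\limsup_{T \to \infty}$ of the functional appearing in \eqref{eqn:general_disc} evaluated at $\tau_\ve$. Decomposing $e^{-\alpha_{\tau_\ve \wedge T}} g(X_{\tau_\ve \wedge T}) = \ind{\tau_\ve \le T} e^{-\alpha_{\tau_\ve}} g(X_{\tau_\ve}) + \ind{\tau_\ve > T} e^{-\alpha_T} g(X_T)$, the integral and the first boundary piece both converge as $T \to \infty$ by dominated convergence with dominants $\|f\|\int_0^{\tau_\ve} e^{-\alpha_s}\,ds$ and $\|g\|$ respectively. The main obstacle I anticipate is the residual $\ee^x\{\ind{\tau_\ve > T} e^{-\alpha_T} g(X_T)\}$: unlike in the undiscounted case of Section \ref{sec:no_disc}, $\tau_\ve$ need not be almost surely finite, so $\prob^x\{\tau_\ve > T\}$ may not vanish. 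The decisive observation is that the a priori bound $\ee^x\{\int_0^{\tau_\ve} e^{-\alpha_s}\,ds\} \le M(x)$ forces $\int_0^\infty e^{-\alpha_s}\,ds < \infty$ almost surely on $\{\tau_\ve = \infty\}$; since $s \mapsto e^{-\alpha_s}$ is non-increasing (because $r \ge 0$), convergence of this improper integral entails $e^{-\alpha_T} \to 0$ almost surely on that event, while on $\{\tau_\ve < \infty\}$ the indicator $\ind{\tau_\ve > T}$ vanishes for sufficiently large $T$. Bounded convergence then yields $\ee^x\{\ind{\tau_\ve > T} e^{-\alpha_T} g(X_T)\} \to 0$, so the $\limsup_{T \to \infty}$ of the functional in \eqref{eqn:general_disc} at $\tau_\ve$ is at least $w(x) - \ve$, completing the plan.
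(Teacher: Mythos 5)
Your proposal is correct and follows the paper's proof almost step for step: the same application of \eqref{eqn:gen_disc_bellman_ineq} to $\sigma_m$ and $\tau_\ve$, the same replacement of $w(X_{\tau_\ve})$ by $g(X_{\tau_\ve})+\ve$ on $\{\sigma_m \ge \tau_\ve\}$, the same a priori bound $\ee^x\{\int_0^{\tau_\ve} e^{-\alpha_s}\,ds\} \le M(x)$ obtained from Lemma \ref{lem:disc_relation_sigma_m} and monotone convergence, and the same use of the two assertions of Lemma \ref{lem:gen_disc_tech} to pass to the limit in $m$. The one place where you genuinely diverge is the residual term $\ee^x\{\ind{\tau_\ve > T} e^{-\alpha_T} g(X_T)\}$: the paper disposes of it by combining assertion 1 of Lemma \ref{lem:gen_disc_tech} (which gives $\ee^x\{\ind{\tau_\ve = \infty} e^{-\alpha_{\sigma_m}}\} \le \frac{1}{m\ve}$, since $\sigma_m < \tau_\ve$ automatically on $\{\tau_\ve = \infty\}$) with the monotonicity of $t \mapsto \alpha_t$ and the arbitrariness of $m$, whereas you deduce from $\ee^x\{\int_0^{\tau_\ve} e^{-\alpha_s}\,ds\} \le M(x)$ that $\int_0^\infty e^{-\alpha_s}\,ds < \infty$ a.s.\ on $\{\tau_\ve = \infty\}$ and then use the elementary fact that a non-negative non-increasing integrable function tends to zero. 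Both arguments are valid; yours is slightly more self-contained in that it reuses the a priori bound you have already established rather than invoking the probabilistic estimate of Lemma \ref{lem:gen_disc_tech} a second time, while the paper's version makes the mechanism (the $\frac{1}{m\ve}$ control of the event $\{\sigma_m < \tau_\ve\}$) more explicit.
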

\begin{proof}
Apply \eqref{eqn:gen_disc_bellman_ineq} to $\sigma_m$ and $\tau_\ve$
\begin{align}
w(x) - \frac1m
&\le \ee^{x} \Big \{ \int_0^{\sigma_m} e^{-\alpha_s} f(X_s) ds + e^{-\alpha_{\sigma_m}} g(X_{\sigma_m}) \Big\}\notag\\
&\le \ee^{x} \Big\{ \int_0^{\sigma_m\wedge \tau_\ve} e^{-\alpha_s} f(X_s) ds + \ind{\sigma_m < \tau_\ve} e^{-\alpha_{\sigma_m}} g(X_{\sigma_m}) + \ind{\sigma_m \ge \tau_\ve} e^{-\alpha_{\tau_\ve}} w(X_{\tau_\ve}) \Big\}.\label{eqn:ineq_tau_ve}
\end{align}
We now want to find out how the right-hand side of this inequality looks like when $m \to \infty$. By assertion 1 of Lemma \ref{lem:gen_disc_tech},
\[
\lim_{m \to \infty} \ee^x \big\{ \ind{\sigma_m < \tau_\ve} e^{-\alpha_{\sigma_m}} g(X_{\sigma_m}) \big\} = 0.
\]
On $\{ \sigma_m \ge \tau_\ve\}$ we have $\tau_\ve < \infty$, so $w(X_{\tau_\ve}) \le g(X_{\tau_\ve}) + \ve$. Hence, for each $m$
\begin{multline*}
\ee^x \big\{ \ind{\sigma_m \ge \tau_\ve} e^{-\alpha_{\tau_\ve}} w(X_{\tau_\ve}) \big\}
\le
\ee^x \big\{ \ind{\sigma_m \ge \tau_\ve} e^{-\alpha_{\tau_\ve}} g(X_{\tau_\ve}) \big\} + \ve\\
=
\ee^x \big\{ \ind{\tau_\ve < \infty} \ind{\sigma_m \ge \tau_\ve} e^{-\alpha_{\tau_\ve}} g(X_{\tau_\ve}) \big\}
+
\ee^x \big\{ \ind{\tau_\ve = \infty} \ind{\sigma_m \ge \tau_\ve} e^{-\alpha_{\tau_\ve}} g(X_{\tau_\ve}) \big\} + \ve.
\end{multline*}
Clearly, the second expectation is $0$, while the first one converges to
\[
\ee^x \big\{ \ind{\tau_\ve < \infty} e^{-\alpha_{\tau_\ve}} g(X_{\tau_\ve}) \big\}
\]
by the second assertion of Lemma \ref{lem:gen_disc_tech}. A proof of convergence of the first term on the right-hand side of \eqref{eqn:ineq_tau_ve} is slightly more involved. Notice that
\[
\Big| \int_0^{\sigma_m \wedge \tau_\ve} e^{-\alpha_s} f(X_s) ds \Big| \le \|f\| \int_0^{\sigma_m \wedge \tau_\ve} e^{-\alpha_s} ds.
\]
The right-hand side is increasing in $m$. Hence, it is bounded by a random variable $\|f\| \int_0^{\sigma_\infty \wedge \tau_\ve} e^{-\alpha_s} ds=\|f\| \int_0^{\tau_\ve} e^{-\alpha_s} ds$ whose expectation does not exceed $\|f\| M(x)$. This allows us to use the dominated convergence theorem from which we obtain
\[
\lim_{m \to \infty} \ee^{x} \Big\{ \int_0^{\sigma_m\wedge \tau_\ve} e^{-\alpha_s} f(X_s) ds \Big\} =
\ee^{x} \Big\{ \int_0^{\tau_\ve} e^{-\alpha_s} f(X_s) ds \Big\}.
\]
Since $\int_0^{\tau_\ve} e^{-\alpha_s} ds\le M(x)$ we also have
\[
\ee^{x} \Big\{ \int_0^{\tau_\ve} e^{-\alpha_s} f(X_s) ds \Big\} = \limsup_{T \to \infty} \ee^{x} \Big\{ \int_0^{\tau_\ve \wedge T} e^{-\alpha_s} f(X_s) ds \Big\}.
\]
Combining the above results gives
\[
w(x) \le \lim_{T \to \infty} \ee^{x} \Big \{ \int_0^{\tau_\ve \wedge T} e^{-\alpha_s} f(X_s) ds\Big\} + \ee^x \Big\{ \ind{\tau_\ve < \infty} e^{-\alpha_{\tau_\ve}} g(X_{\tau_\ve}) \Big\} + \ve.
\]
It remains to prove that
\begin{equation}\label{eqn:gen_disc_aux1}
\limsup_{T \to \infty} \ee^x \Big\{ \ind{\tau_\ve = \infty} e^{-\alpha_{T}} g(X_{T}) \Big\} = 0.
\end{equation}
By the first assertion of Lemma \ref{lem:gen_disc_tech}, we have
\[
\ee^x \big\{ \ind{\tau_\ve = \infty} e^{-\sigma_m} \big\} \le \frac{1}{m \ve}.
\]
By monotonicity of $t \mapsto \alpha_t$ and the dominated convergence theorem, this implies
\[
\frac{1}{m \ve} \ge \lim_{T \to \infty} \ee^x \big\{ \ind{\tau_\ve = \infty} e^{-\alpha_{\sigma_m \wedge T}} \big\} \ge \lim_{T \to \infty} \ee^x \big\{ \ind{\tau_\ve = \infty} e^{-\alpha_T} \big\}.
\]
This is true for an arbitrary $m$, so the limit on the right-hand side is in fact $0$. Boundedness of $g$ then yields \eqref{eqn:gen_disc_aux1}, which implies
\[
w(x) \le \limsup_{T \to \infty} \ee^{x} \Big \{ \int_0^{\tau_\ve \wedge T} e^{-\alpha_s} f(X_s) ds +  e^{-\alpha_{\tau_\ve \wedge T}} g(X_{\tau_\ve \wedge T}) \Big\} + \ve.
\]
Hence $\tau_\ve$ is $\ve$-optimal.
\end{proof}

We have shown that an $\ve$-optimal stopping time exists in a standard form. Without further assumptions we are not able to construct an optimal stopping time. This should not be surprising as this is common for infinite horizon stopping problems even with a constant discount rate. Indeed, our main tool in the undiscounted case is the integral part of the functional which is interpreted as a penalty for a long wait before stopping -- an infinite-valued stopping time drives the functional to $-\infty$. When discounting is applied, the integral part of the functional may remain bounded whatever the stopping time is applied; this is clearly the case when $\inf_x r(x) > 0$. However, when the discounting is limited, we are able to say more about an optimal stopping time and about properties of the value function.
\begin{assumption}
\item[(D3)]\label{ass:small_discount} The random variable $R := \lim_{t \to \infty} \alpha_t$ is $\prob^x$-a.s. finite.
\end{assumption}
The random variable $R$ is well-defined because $r(x) \ge 0$ so $\alpha_t$ is non-decreasing. It is satisfied if the discount rate $r(X_t)$ decreases quickly with time. In particular, a constant discount rate does not satisfy \ref{ass:small_discount}.
\begin{assumption}
\item[(D3')]\label{ass:small_discount_deterministic} There is $\delta > 0$ such that $\alpha_t \le -\log(\delta)$, $\prob^x$-a.s. for all $t \ge 0$.
\end{assumption}
The simplest example of a stopping problem satisfying \ref{ass:small_discount_deterministic} is $r \equiv 0$, i.e., the problem studied in Section \ref{sec:no_disc}. Notice that assumption \ref{ass:small_discount_deterministic} is a particular case of assumption \ref{ass:small_discount} with $R \le - \log(\delta)$.

Let
\[
\tau^* = \inf \{ t \ge 0:\ g(X_t) \ge w(X_t)\}.
\]
On the set $\{\tau^* < \infty\}$, due to the continuity of $g$ and lower semicontinuity of $w$ as well as the right-continuity of $(X_t)$ we have
\[
g(X_{\tau^*}) \ge w(X_{\tau^*}).
\]
We shall use this property in the proof of the following theorem.
\begin{thm}\label{thm:general_optimality}
Assume \ref{ass:small_discount} in addition to the standing assumptions of this section. The stopping time $\tau^*$ is finite $\prob^x$-a.s. and optimal for $w(x)$. Moreover, $\ee^x \{ \tau^* e^{-R} \} \le M(x)$, where $M(x)$ is defined in Lemma \ref{lem:general_disc_bound}. Under \ref{ass:small_discount_deterministic} we additionally have $\ee^x \{ \tau_\ve \} \le M(x)/\delta$ and $\ee^x \{ \tau^* \} \le M(x)/\delta$.
\end{thm}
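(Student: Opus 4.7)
The plan is to pass to the limit $\ve\downarrow 0$ in the $\ve$-optimal stopping time $\tau_\ve$ produced by Theorem~\ref{thm:general_disc_optimality}. The decisive consequence of \ref{ass:small_discount} is that $e^{-R}>0$ a.s., which converts any bound on $\int_0^{\tau_\ve}e^{-\alpha_s}\,ds$ into an $L^1$-bound on $\tau_\ve e^{-R}$ and hence into almost-sure finiteness of $\tau_\ve$.

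The first step is to establish $\ee^x\bigl\{\int_0^{\tau_\ve}e^{-\alpha_s}\,ds\bigr\}\le M(x)$. Since $\tau_\ve$ is not a priori integrable, Lemma~\ref{lem:general_disc_bound} cannot be applied to it directly; instead I would use the bounded $\frac1m$-optimal sequence $(\sigma_m)$ constructed in the lemma preceding Lemma~\ref{lem:gen_disc_snell}. Each $\sigma_m$ is integrable, so Lemma~\ref{lem:general_disc_bound} gives $\ee^x\{\int_0^{\sigma_m}e^{-\alpha_s}\,ds\}\le M(x)$ uniformly in $m$. Lemma~\ref{lem:disc_relation_sigma_m} implies $\sigma_\infty:=\lim_m\sigma_m\ge \tau_\ve$ pointwise, whence monotone convergence delivers the bound for $\tau_\ve$. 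Under \ref{ass:small_discount}, $\int_0^{\tau_\ve}e^{-\alpha_s}\,ds\ge \tau_\ve e^{-R}$, so $\ee^x\{\tau_\ve e^{-R}\}\le M(x)$, and since $e^{-R}>0$ a.s.\ this forces $\tau_\ve<\infty$ a.s. The monotone limit $\tau_0:=\lim_{\ve\downarrow 0}\tau_\ve$ is then well defined, finite a.s., and inherits $\ee^x\{\tau_0 e^{-R}\}\le M(x)$.

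Identifying $\tau_0$ with $\tau^*$ is the main obstacle. The inclusion $\{g\ge w\}\subseteq\{g\ge w-\ve\}$ yields $\tau_\ve\le \tau^*$ for every $\ve>0$, hence $\tau_0\le \tau^*$. For the reverse direction, since $\tau_\ve\uparrow \tau_0<\infty$, quasi left-continuity of $(X_t)$ gives $X_{\tau_\ve}\to X_{\tau_0}$ a.s.; combining the continuity of $g$ with the lower semi-continuity of $w$ in the relation $g(X_{\tau_\ve})\ge w(X_{\tau_\ve})-\eta$ (valid whenever $\ve\le \eta$) and letting first $\ve\downarrow 0$, then $\eta\downarrow 0$, I obtain $g(X_{\tau_0})\ge w(X_{\tau_0})$, i.e.\ $\tau^*\le \tau_0$. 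Hence $\tau^*=\tau_0<\infty$ a.s.\ and the bound $\ee^x\{\tau^* e^{-R}\}\le M(x)$ is inherited.

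Optimality of $\tau^*$ follows by a double passage to the limit in the $\ve$-optimality provided by Theorem~\ref{thm:general_disc_optimality}. With $\tau_\ve<\infty$ a.s., the integrable dominator $\|f\|\int_0^{\tau_\ve}e^{-\alpha_s}\,ds+\|g\|$ lets me remove the $\limsup_{T\to\infty}$ via dominated convergence and recast $\ve$-optimality as
\[
 w(x) \le \ee^x\Bigl\{\int_0^{\tau_\ve} e^{-\alpha_s} f(X_s)\,ds + e^{-\alpha_{\tau_\ve}} g(X_{\tau_\ve})\Bigr\} + \ve.
\]
Sending $\ve\downarrow 0$ and using quasi left-continuity together with dominated convergence (now with dominator $\|f\|\int_0^{\tau^*}e^{-\alpha_s}\,ds+\|g\|$) produces $w(x)\le \ee^x\{\int_0^{\tau^*}e^{-\alpha_s}f(X_s)\,ds+e^{-\alpha_{\tau^*}}g(X_{\tau^*})\}$; the reverse inequality is immediate from the definition of $w$. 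Finally, under \ref{ass:small_discount_deterministic}, $e^{-R}\ge \delta$ a.s., so the bounds on $\ee^x\{\tau_\ve e^{-R}\}$ and $\ee^x\{\tau^* e^{-R}\}$ immediately yield $\ee^x\{\tau_\ve\}\le M(x)/\delta$ and $\ee^x\{\tau^*\}\le M(x)/\delta$.
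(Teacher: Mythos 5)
Your proof is correct and follows the paper's argument almost verbatim: bound $\ee^x\{\tau_\ve e^{-R}\}$ by $M(x)$, pass to the monotone limit $\tau_0=\lim_{\ve\downarrow 0}\tau_\ve$, identify $\tau_0=\tau^*$ via quasi left-continuity together with the continuity of $g$ and lower semicontinuity of $w$, and let $\ve\downarrow 0$ in the $\ve$-optimality inequality, with the $\delta$-bounds under \ref{ass:small_discount_deterministic} following from $e^{-R}\ge\delta$. The only (harmless) deviation is how you derive $\ee^x\{\tau_\ve e^{-R}\}\le M(x)$: the paper changes measure with the unnormalised density $e^{-R}$ and invokes Lemma \ref{lem:bound_expectation}, whereas you get $\ee^x\{\int_0^{\tau_\ve}e^{-\alpha_s}\,ds\}\le M(x)$ directly by monotone convergence from the uniform bound on the $\sigma_m$ and the inequality $\sigma_\infty\ge\tau_\ve$ supplied by Lemma \ref{lem:disc_relation_sigma_m} --- a step the paper itself already uses inside the proof of Theorem \ref{thm:general_disc_optimality}.
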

\begin{proof}
We have
\[
\ee^x \{ \sigma_m e^{-R} \} \le \ee^x \Big\{ \int_0^{\sigma_m} e^{-\alpha_s} ds \Big\} \le M(x),
\]
where the first inequality follows from the monotonicity of $\alpha_s$ and assumption \ref{ass:small_discount} while the second inequality is from Lemma \ref{lem:general_disc_bound}. By the first assertion of Lemma \ref{lem:gen_disc_tech} we obtain
\[
\ee^x \{ \ind{\sigma_m < \tau_\ve} e^{-R} \} \le \frac{1}{m \ve}.
\]
The factor $e^{-R}$ can be interpreted as an unnormalised Radon-Nikodym density. Using this density to change the probability measure we end up in the setting of  Lemma \ref{lem:bound_expectation}. Hence, $\ee^x \{ \tau_\ve e^{-R} \} \le M(x)$. In Theorem \ref{thm:general_disc_optimality} we showed that $\tau_\ve$ is $\ve$-optimal.  Put $\tau_0 = \lim_{\ve \to 0} \tau_\ve$. Since this is an increasing sequence of random variables with the weighted expectation bounded by $M(x)$ then $\tau_0$ is well defined and $\ee^x \{ \tau_0 e^{-R} \} \le M(x)$. Since $R$ is finite-valued, this implies that $\tau_0 < \infty$ $\prob^x$-a.s. We also have that  $g(X_{\tau_\ve}) \ge w(X_{\tau_\ve}) - \ve$. By the quasi left-continuity of $(X_t)$, taking the limit $\ve \to 0$ we obtain
\[
g(X_{\tau_0}) \ge w(X_{\tau_0}).
\]
So $\tau_0 \ge \tau^*$. This means that $\tau^*$ is finite-valued and $\ee^x \{ \tau^* e^{-R} \} \le M(x)$. Clearly, $\tau_\ve \le \tau^*$, so $\tau_0 = \tau^*$. The optimality of $\tau^*$ follows immediately by letting $\ve \to 0$ in
\[
w(x)-\ve \leq \ee^x \Big\{\int_0^{\tau_\ve} e^{-\alpha_s} f(X_s)ds + e^{-\alpha_{\tau_\ve}}g(X_{\tau_\ve})\Big\}.
\]

Under assumption \ref{ass:small_discount_deterministic} we have $e^{-R} \ge \delta$. Then $\ee^x \{ \tau_\ve \, \delta\} \le \ee^x \{ \tau_\ve e^{-R} \}  \le M(x)$ and, analogously, $\ee^x \{ \tau^*\, \delta \} \le M(x)$. Recalling that $\delta$ is a constant completes the proof.
\end{proof}

\subsection{Sufficient conditions for \ref{ass:large_dev}}
We shall present two conditions that imply \ref{ass:large_dev}. They generalise corresponding conditions from Section \ref{sec:no_disc}.

Define a potential
\[
q_r(x) = \limsup_{T \to \infty} \ee^x \Big\{ \int_0^T e^{-\alpha_s} \big(f(X_s) - \mu(f)\big) ds \Big\}.
\]
and assume:
\begin{assumption}
\item[(C1')] $q_r(x)$ is continuous and bounded from below, \label{ass:B1_prime}
\item[(C2')] for any bounded stopping time $\sigma$
\[
q_r(x)  = \ee^x \Big\{ \int_0^{\sigma} e^{-\alpha_s} \big(f(X_s) - \mu(f) \big) ds + e^{-\alpha_\sigma} q_r(X_\sigma)\Big\}.
\] \label{ass:B2_prime}
\end{assumption}
These assumptions collapse to their counterparts in Section \ref{sec:no_disc} when $r \equiv 0$, i.e., $\alpha_t = 0$.
Sufficient conditions for \ref{ass:B1_prime} under uniform (geometric) ergodicity of $(X_t)$ can be found in \cite[Theorem 4]{Stettner1989a} (see also \cite{KonMeyn}). An easy adaptation of Lemma \ref{lem:integrability} and \ref{lem:f_C0} shows that conditions \ref{ass:B1_prime}-\ref{ass:B2_prime} are satisfied when $\mu(f) < 0$, the set $L = \{ x \in E:\ f(x) \le \mu(f) \}$ is compact and assumptions \ref{ass:speed}-\ref{ass:integrability} are fulfilled.

The statement and the proof of the following lemma resembles closely Lemma \ref{lem:gamma_from_q}.
\begin{lemma}
Assume \ref{ass:B1_prime}-\ref{ass:B2_prime} and \ref{ass:B3}. Assumption \ref{ass:large_dev} is satisfied with any $d(x) \in (\mu(f), 0)$. Moreover, $\gamma_r(x) \le q_r(x) - A$, where $A = \min\big(0, \inf_y q_r(y) \big)$.
\end{lemma}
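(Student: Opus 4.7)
The plan is to mirror the argument of Lemma \ref{lem:gamma_from_q}, carefully tracking how the discount factor $e^{-\alpha_s}$ interacts with the constant-shift terms and the lower bound on $q_r$.

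First I would fix an arbitrary stopping time $\tau$, a horizon $T > 0$, and some $d(x) \in (\mu(f), 0)$. The starting point is the identity
\[
\int_0^{\tau \wedge T} e^{-\alpha_s} \big(f(X_s) - d(x)\big) ds
= \int_0^{\tau \wedge T} e^{-\alpha_s} \big(f(X_s) - \mu(f)\big) ds + \big(\mu(f) - d(x)\big) \int_0^{\tau \wedge T} e^{-\alpha_s} ds.
\]
Because $d(x) > \mu(f)$, the coefficient $\mu(f) - d(x)$ is strictly negative, and the integral $\int_0^{\tau\wedge T} e^{-\alpha_s} ds$ is non-negative, so the second summand is $\le 0$ pathwise. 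Taking $\prob^x$-expectation gives the bound
\[
\ee^x \Big\{ \int_0^{\tau \wedge T} e^{-\alpha_s} \big(f(X_s) - d(x)\big) ds \Big\}
\le
\ee^x \Big\{ \int_0^{\tau \wedge T} e^{-\alpha_s} \big(f(X_s) - \mu(f)\big) ds \Big\}.
\]

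Next I would invoke the martingale-type identity \ref{ass:B2_prime} applied to the bounded stopping time $\sigma = \tau \wedge T$, which rewrites the right-hand side as
\[
\ee^x \Big\{ \int_0^{\tau \wedge T} e^{-\alpha_s} \big(f(X_s) - \mu(f)\big) ds \Big\}
= q_r(x) - \ee^x \Big\{ e^{-\alpha_{\tau \wedge T}} q_r(X_{\tau \wedge T}) \Big\}.
\]
The crucial sign analysis is now the following. By \ref{ass:B1_prime} we have $q_r \ge \inf_y q_r(y) \ge A$, and by construction $A = \min\big(0, \inf_y q_r(y)\big) \le 0$. Since $0 < e^{-\alpha_{\tau \wedge T}} \le 1$ and $A \le 0$, we obtain pathwise
\[
e^{-\alpha_{\tau \wedge T}} q_r(X_{\tau \wedge T}) \ge e^{-\alpha_{\tau \wedge T}} A \ge A,
\]
(the first inequality uses $q_r(X_{\tau \wedge T}) \ge A$ together with $e^{-\alpha_{\tau \wedge T}} \ge 0$; the second uses $A \le 0$ and $e^{-\alpha_{\tau \wedge T}} \le 1$). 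Consequently
\[
\ee^x \Big\{ \int_0^{\tau \wedge T} e^{-\alpha_s} \big(f(X_s) - d(x)\big) ds \Big\} \le q_r(x) - A.
\]

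Finally, taking $\limsup_{T \to \infty}$ and then $\sup_\tau$ gives $\gamma_r(x) \le q_r(x) - A < \infty$, which is precisely \ref{ass:large_dev}. The only subtle point, and really the only difference from the undiscounted Lemma \ref{lem:gamma_from_q}, is that a bare lower bound $q_r \ge \inf_y q_r(y)$ is not good enough when it is positive, because multiplication by $e^{-\alpha_{\tau\wedge T}} \le 1$ would diminish it; the fix is to replace the lower bound by $A = \min(0, \inf_y q_r(y))$, which is non-positive and therefore preserved under the contraction by $e^{-\alpha_{\tau\wedge T}}$. Apart from this small sign adjustment, the proof is a direct transcription of the undiscounted argument.
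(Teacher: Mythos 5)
Your proof is correct and is exactly the adaptation the paper intends: the authors omit the proof, stating only that it "resembles closely" that of Lemma \ref{lem:gamma_from_q}, and your argument is that transcription, with the one genuinely new point (that $A=\min\bigl(0,\inf_y q_r(y)\bigr)\le 0$ is needed so the lower bound survives multiplication by $e^{-\alpha_{\tau\wedge T}}\le 1$) identified and handled correctly.
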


The second criterion for \ref{ass:large_dev} is provided by an upper bound for large deviations of the discounted empirical process (cf. assumption \ref{ass:L}).
\begin{lemma}\label{lem:gen_ldev_bound}
Assume
\begin{assumption}
\item[(L$_r$)] For any $\delta, \ve>0$ and a compact set $K \subset E$ there is $N>0$ and $p>0$ such that for $n\ge N$ and $x \in K$ we have
\[
\prob^x\Big\{\Big|\frac1{\int_0^{n \delta} e^{-\alpha_s} ds} \int_0^{n \delta} e^{-\alpha_s} f(X_s) ds - \mu(f)\Big|> \ve \Big\} \le e^{-p (n \delta)}.
\]\label{ass:L_r}
\end{assumption}
Then \ref{ass:large_dev} holds and $M(x)$ defined in Lemma \ref{lem:general_disc_bound} is bounded on compact sets.
\end{lemma}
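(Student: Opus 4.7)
The plan is to adapt the argument of Lemma~\ref{lem:ldev_bound} to the discounted setting, exploiting two facts: $e^{-\alpha_s}\le 1$ (so a trivial estimate on any sub-interval of length $\delta$ costs at most $2\|f\|\delta$) and that $\Lambda_t := \int_0^t e^{-\alpha_s}\,ds$ is non-decreasing in $t$. The latter is crucial: it lets the ratio bound at the deterministic time $i\delta$ in \ref{ass:L_r} be transferred, on the event $\{\tau\wedge T\ge i\delta\}$, to a bound in terms of $\Lambda_{\tau\wedge T}$.

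Fix a compact set $K\subset E$, a constant $\ve\in(0,-\mu(f))$, and any $\delta>0$. Assumption \ref{ass:L_r} produces $N,p$ so that $\prob^x\{D_i\}\le e^{-pi\delta}$ for $i\ge N$ and $x\in K$, where (after clearing the denominator)
\[
D_i:=\Big\{\Big|\int_0^{i\delta} e^{-\alpha_s}\big(f(X_s)-\mu(f)\big)\,ds\Big|>\ve\,\Lambda_{i\delta}\Big\}.
\]
For an arbitrary stopping time $\tau$ and $T>0$, partition according to which sub-interval $\tau\wedge T$ falls into:
\[
\ee^x\Big\{\int_0^{\tau\wedge T} e^{-\alpha_s}(f-\mu(f))\,ds\Big\}=\sum_{i\ge 0}\ee^x\Big\{\ind{i\delta\le\tau\wedge T<(i+1)\delta}\int_0^{\tau\wedge T} e^{-\alpha_s}(f-\mu(f))\,ds\Big\},
\]
and on each such event write $\int_0^{\tau\wedge T}=\int_0^{i\delta}+\int_{i\delta}^{\tau\wedge T}$, with the second piece bounded by $2\|f\|\delta$.

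For $i<N$ use the trivial bound $2\|f\|i\delta$, contributing only a constant $C_{K,1}$ depending on $K$. For $i\ge N$, on $D_i^c$ the leading integral is at most $\ve\Lambda_{i\delta}\le\ve\Lambda_{\tau\wedge T}$, while on $D_i$ the trivial bound $2\|f\|(i+1)\delta$ paired with $\prob^x\{D_i\}\le e^{-pi\delta}$ yields a summable series whose total $C_{K,2}$ again depends only on $K$. Using the disjointness of the events $\{i\delta\le\tau\wedge T<(i+1)\delta\}$,
\[
\ee^x\Big\{\int_0^{\tau\wedge T} e^{-\alpha_s}(f-\mu(f))\,ds\Big\}\le\ve\,\ee^x\{\Lambda_{\tau\wedge T}\}+C_K,
\]
where $C_K=C_{K,1}+C_{K,2}+2\|f\|\delta$ is independent of $\tau,T$ and $x\in K$. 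Setting $d:=\mu(f)+\ve<0$ (constant on $K$) and expanding,
\[
\ee^x\Big\{\int_0^{\tau\wedge T} e^{-\alpha_s}(f-d)\,ds\Big\}=\ee^x\Big\{\int_0^{\tau\wedge T} e^{-\alpha_s}(f-\mu(f))\,ds\Big\}-\ve\,\ee^x\{\Lambda_{\tau\wedge T}\}\le C_K.
\]

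Taking the supremum over $\tau$ and $\limsup_{T\to\infty}$ yields $\gamma_r(x)\le C_K$ for $x\in K$, so \ref{ass:large_dev} holds with the constant choice $d(x)\equiv d$ on $K$, and consequently $M(x)=(\gamma_r(x)+2\|g\|+1)/(-d)$ is bounded on $K$. The one place that needs care is the bookkeeping in the infinite sum: the exponential decay of $\prob^x\{D_i\}$ from \ref{ass:L_r} must dominate the linear growth of the trivial upper bound $2\|f\|(i+1)\delta$, and the initial terms $i<N$ must be absorbed into a constant that is uniform over $K$ -- both of which follow directly because $N$ in \ref{ass:L_r} depends only on $K$, $\ve$ and $\delta$.
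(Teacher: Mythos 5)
Your proof is correct and follows the same overall strategy as the paper's: choose $0<\ve<-\mu(f)$, partition according to the interval containing the stopping time, split into a ``good'' event where the large-deviation bound extracts the term $\ve\,\ee^x\{\int_0^{\tau}e^{-\alpha_s}ds\}$ and a ``bad'' event whose trivial linear bound is killed by the exponentially small probability, and then absorb $\ve$ into $d(x)=\mu(f)+\ve$. The one genuine difference is in how the bound at deterministic times is transferred to the random time $\tau\wedge T$. The paper first upgrades \ref{ass:L_r} to a supremum-type estimate over all real $u\ge t$ (the discounted analogue of Lemma \ref{lem:supremum_large_dev}, whose adaptation it only asserts ``similarly as in Lemma \ref{lem:supremum_large_dev}'') and then applies that estimate directly at $u=\tau$; note that the interpolation between grid points in the discounted setting is slightly more delicate than in Lemma \ref{lem:supremum_large_dev}, since $\Lambda_t=\int_0^t e^{-\alpha_s}ds$ need not grow linearly. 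You instead apply \ref{ass:L_r} only at the deterministic grid point $i\delta$ and pay a flat $2\|f\|\delta$ for the overshoot on $[i\delta,\tau\wedge T]$, using $e^{-\alpha_s}\le 1$ and the monotonicity $\Lambda_{i\delta}\le\Lambda_{\tau\wedge T}$; this sidesteps the supremum lemma entirely and is, if anything, slightly more self-contained. Your bookkeeping of the $i<N$ terms and of the uniformity of $N$, $p$ over the compact set $K$ (hence of $C_K$, hence of $M$) is correct and matches the paper's concluding remark.
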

\begin{proof}
This proof is similar to the proof of Lemma \ref{lem:ldev_bound}. Choose $0 < \ve < -\mu(f)$ and fix $x \in E$. Let $A_t = \big\{ \sup_{u \ge t} \big| \big(\int_0^u e^{-\alpha_s} ds\big)^{-1} \int_0^u e^{-\alpha_s} f(X_s) ds -\mu(f)\big| >\ve \big\}$. Similarly as in Lemma \ref{lem:supremum_large_dev} there is $N > 0$ such that for $t \ge N$ we have $\prob^x \{ A_t \} \le Ce^{-\rho t}$. Then for any integrable stopping time $\tau$ we have
\begin{align*}
\ee^x\left\{\int_0^\tau e^{-\alpha_s} (f(x_s)-\mu(f)) ds\right\}
&= \sum_{i=0}^\infty \ee^x \left\{1_{i\leq \tau <i+1} \int_0^\tau e^{-\alpha_s} (f(x_s)-\mu(f))ds\right\}\\
&\le
\ve \ee^x\Big\{ \int_0^\tau e^{-\alpha_s} ds \Big\} +2\|f\| +\sum_{i=1}^\infty 2\|f\| (i+1) \prob^x\{A_i\}\\
&\le
\ve \ee^x\Big\{ \int_0^\tau e^{-\alpha_s} ds \Big\} +2\|f\|+ \sum_{i=1}^\infty 2\|f\| (i+1) Ce^{-i\rho}\\
&=\ve \ee^x\Big\{ \int_0^\tau e^{-\alpha_s} ds \Big\} + \bar{C}.
\end{align*}
This is equivalent to
\[
\ee^x\left\{\int_0^\tau e^{-\alpha_s} \big(f(x_s)-\mu(f)-\ve\big) ds\right\} \le \bar C.
\]
Hence, \ref{ass:large_dev} holds with $d(x) = \mu(f) + \ve$ which, by the choice of $\ve$, is strictly negative. Since $\bar C$ can be taken bounded on compact sets, we immediately obtain that $d$ can be taken constant on compact sets and then $M(x)$ is bounded on compact sets.
\end{proof}

\subsection{Continuity of the value function}
In three out of four cases studied in the theorem below, the continuity of $w$ results from the convergence of $w_T$ to $w$ uniformly on compact sets and proofs from Subsection \ref{subsec:continuity} apply. The proof when the process $(X_t)$ is strongly Feller retains its validity due to the boundedness of the discount rate $r$ (see assumption \ref{ass:positive_discount}).

\begin{thm}\label{thm:gen_continuity}
Assume \ref{ass:large_dev}-\ref{ass:positive_discount} and \ref{ass:weak_feller}. Either of the following conditions is sufficient for the continuity of the value function $w$:
\begin{enumerate}
\item Assumption \ref{ass:L_r} holds.
\item $f(x) \le 0$ for $x \in E$.
\item The process $(X_t)$ is strongly Feller and assumptions \ref{ass:B1_prime}-\ref{ass:B2_prime} hold.
\item Assumption \ref{ass:speed} and \ref{ass:uniform_int} hold.
\end{enumerate}
\end{thm}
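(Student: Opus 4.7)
The plan is to adapt each of the four arguments of Subsection \ref{subsec:continuity} to the discounted setting, using the discount weight $e^{-\alpha_s}$ in place of Lebesgue measure and the bound $\ee^x\{\int_0^\tau e^{-\alpha_s}ds\} \le M(x)$ from Lemma \ref{lem:general_disc_bound} in place of $\ee^x\{\tau\} \le M(x)$. First I would set up the common reduction: by Lemma \ref{lem:disc_semicont} the finite-horizon value functions $w_T$ are continuous and $w$ is lower semicontinuous. In cases 1, 2, 4 it suffices to prove $w_T \to w$ uniformly on every compact $L \subset E$, which immediately yields upper semicontinuity, hence continuity. Writing, for any $\tau$ in the restricted class $\laa(x)$ of Lemma \ref{lem:general_disc_bound},
\[
0 \le w(x)-w_T(x) \le \sup_{\tau\in\laa(x)} \ee^x\Big\{\int_T^{\tau\vee T} e^{-\alpha_s} f(X_s)\,ds\Big\} + 2\|g\|\,\sup_{\tau\in\laa(x)}\ee^x\{\ind{\tau>T} e^{-\alpha_T}\},
\]
the bound $T\,\ind{\tau>T}\,e^{-\alpha_T}\le \int_0^{\tau\wedge T} e^{-\alpha_s}ds$ gives $\ee^x\{\ind{\tau>T}e^{-\alpha_T}\}\le M(x)/T$, so the second term vanishes uniformly on $L$ since Lemma \ref{lem:gen_ldev_bound} (under \ref{ass:L_r}), an adaptation of Lemma \ref{lem:gamma_from_q} (under \ref{ass:B1_prime}--\ref{ass:B2_prime}), or the estimate $|q_r|\le C'K$ (under \ref{ass:speed}--\ref{ass:uniform_int}) each make $M$ bounded on $L$.

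For case 1, I would replay Subsection 2.4.1: \ref{ass:L_r} yields a discounted counterpart of Lemma \ref{lem:supremum_large_dev}, and the same splitting over the exceptional sets $A_t$ (this time weighted by $e^{-\alpha_s}ds$) controls the remaining sup. For case 2, the sign hypothesis $f\le 0$ kills the first term outright and the residual $2\|g\|M(x)/T$ is uniform on $L$. For case 4, I would use the martingale identity from \ref{ass:B2_prime} (whose validity under \ref{ass:speed}--\ref{ass:uniform_int} follows from the same proof as Lemma \ref{lem:integrability}, adapted to $e^{-\alpha_s}ds$) to write
\[
\ee^x\Big\{\int_T^{\tau\vee T} e^{-\alpha_s} f(X_s)ds\Big\} = \mu(f)\,\ee^x\Big\{\int_T^{\tau\vee T}e^{-\alpha_s}ds\Big\} + \ee^x\{e^{-\alpha_T}q_r(X_T)-e^{-\alpha_{\tau\vee T}}q_r(X_{\tau\vee T})\},
\]
and then bound $\ee^x\{\ind{\tau>T}|q_r(X_T)|\}$ by Hölder's inequality, exactly as in Subsection 2.4.4, using that \ref{ass:uniform_int} together with $|q_r|\le C'K(x)$ implies $\sup_{x\in L,\,T\ge 0}\ee^x\{|q_r(X_T)|^{1+\alpha}\}<\infty$.

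Case 3 is the delicate one and will require the strategy of Subsection 2.4.3 lifted to the discounted regime. Set $\hat w = w - q_r$; by \ref{ass:B1_prime}--\ref{ass:B2_prime} the problem becomes
\[
\hat w(x) = \sup_\tau \limsup_{T\to\infty}\ee^x\Big\{\mu(f)\int_0^{\tau\wedge T}e^{-\alpha_s}ds + e^{-\alpha_{\tau\wedge T}}(g-q_r)(X_{\tau\wedge T})\Big\},
\]
which is bounded above (since $\mu(f)<0$ and $q_r$ is bounded below). I would show that the discounted semigroup $P^r_\delta \phi(x)=\ee^x\{e^{-\alpha_\delta}\phi(X_\delta)\}$ maps bounded measurable functions into continuous ones: indeed, $P_\delta$ does so by strong Feller, and the multiplicative factor $e^{-\alpha_\delta}$ is continuous in $x$ in an $L^1$ sense because $r$ is continuous and bounded (\ref{ass:positive_discount}). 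Truncating $\hat w$ from below and passing to the monotone limit then yields upper semicontinuity of $P^r_\delta\hat w$. The remaining task, following Subsection 2.4.3 verbatim, is to show $P^r_\delta\hat w \to \hat w$ uniformly on compacts as $\delta\to 0$. The one-sided bound $\hat w - P^r_\delta\hat w \ge -\delta(\|f\|+\|r\|\|\hat w\|_\infty)$ comes from the supermartingale property of Lemma \ref{lem:general_super_mart} applied to $\hat w$, while the reverse bound uses $\sigma = \tau^* \wedge \delta$ and the modulus-of-continuity estimate coming from Lemmas \ref{lem:compactness_of_Feller} and \ref{lem:continuity_of_Feller}.

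The main obstacle is case 3: one has to verify carefully that the interplay between the strong Feller property, the boundedness of $r$, and the (possibly unbounded above) function $\hat w$ produces upper semicontinuity of $P^r_\delta\hat w$, and that the extra discount-rate terms $\|r\|\|\hat w\|_\infty \delta$ introduced by discounting really do vanish uniformly on compacts; the other three cases are routine adaptations of the corresponding undiscounted arguments and should present no conceptual difficulty.
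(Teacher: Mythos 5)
Your proposal is correct and takes essentially the same route as the paper, whose entire proof of this theorem is the one-line remark that it is ``an easy adaptation of the reasoning in Subsection \ref{subsec:continuity}'' (with the strong Feller case surviving because $r$ is bounded); you have simply carried out that adaptation in more detail than the authors do. The only small slip is the constant $\|r\|\,\|\hat w\|_\infty\,\delta$ in case 3 --- $\hat w$ need not be bounded below, so one should instead compare $e^{-\alpha_{\tau^*}}$ with $e^{-\alpha_\delta}$ on the event $\{\tau^*<\delta\}$ and absorb the discrepancy into a $\|r\|\,\|g\|\,\delta$ term, which does not affect the argument.
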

\begin{proof}
The proof is an easy adaptation of the reasoning in Subsection \ref{subsec:continuity}.
\end{proof}

\section{Variational characterisation of the value function}\label{sec:VI}
Previous sections discuss properties of the value function and existence of optimal stopping times. This section provides a more explicit description of the value function as a solution to a variational inequality
\begin{equation}\label{eqn:VI}
\min \big( -\laa w + rw - f, w - g \big) = 0,
\end{equation}
where $\laa$ is a generator for the process $(X_t)$. In general, it is unlikely that the value function is in the domain $D_\laa$ of the generator, so this variational formulation cannot be interpreted in a classical sense. Instead, we shall show that the value function is a viscosity solution of \eqref{eqn:VI}. We shall employ a more stringent definition than commonly used, i.e., we shall use a larger class of test functions.
\begin{definition}
A continuous function $u$ is a \emph{viscosity subsolution} of \eqref{eqn:VI} if for each $x \in E$ and $\psi \in D_\laa$ such that $u(x) = \psi(x)$ and $\psi \ge u$ we have
\begin{equation}\label{eqn:VI_subsolution}
\min \big( -\laa \psi(x) + r(x) \psi(x) - f(x), u(x) - g(x) \big) \le 0.
\end{equation}

A continuous function $u$ is a \emph{viscosity supersolution} of \eqref{eqn:VI} if for each $x \in E$ and $\phi \in D_\laa$ such that $u(x) = \phi(x)$ and $\phi \le u$ we have
\begin{equation}\label{eqn:VI_supersolution}
\min \big( -\laa \phi(x) + r(x)\phi(x) - f(x), u(x) - g(x) \big) \ge 0.
\end{equation}

A continuous function $u$ is a \emph{viscosity solution} of \eqref{eqn:VI} if it is both super- and subsolution.
\end{definition}

In practice, the domain of a generator is rarely known. Instead, one considers a \emph{core}, a linear subspace of $D_\laa$ that defines the generator $A$ on $D_\laa$ uniquely via the closure of its graph. Test functions in the definition of the viscosity solutions are then restricted to belong to that core. For diffusions, it is common to consider $C^2_0$ functions or even $C^\infty_0$ functions. Our choice to use all functions from the domain of the generator as test functions in the definition of viscosity solutions gives a flexibility for the selection of a core in the subsequent quest to solve \eqref{eqn:VI} and to prove the uniqueness of solutions. This also comes at no additional cost.

Assume \ref{ass:weak_feller}-\ref{ass:continuity}, \ref{ass:large_dev}-\ref{ass:positive_discount} and \ref{ass:small_discount_deterministic}. They ensure that there is an optimal stopping time with a finite expectation. We also require that the value function $w$ is continuous. Theorem \ref{thm:gen_continuity} provides sufficient conditions.

The domain of a generator of a weakly Feller process is a dense subset of the space of continuous functions vanishing in infinity \cite[Theorem 17.4]{Kallenberg1997}. The range of the generator is within the same space -- we shall use this continuity property in the proof of the following theorem.

\begin{thm}\label{thm:viscosity_characterisation}
Under the standing assumptions of this section, the value function $w$ is a viscosity solution of \eqref{eqn:VI}.
\end{thm}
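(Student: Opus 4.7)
The plan is to split the statement into the standard super- and subsolution inequalities and reduce each to a one-step optimality/supermartingale identity combined with Dynkin's formula. The key tools are Lemma \ref{lem:gen_disc_snell} (supermartingale property of the discounted Snell envelope), Theorem \ref{thm:general_optimality} (existence of the optimal $\tau^*$ under the standing assumption \ref{ass:small_discount_deterministic}), the strong Markov property (weakly Feller processes can be taken to be standard Markov, see the remark after \ref{ass:weak_feller}), and the discounted Dynkin formula: for $\phi \in D_\laa$ and a bounded stopping time $\sigma$,
\[
\ee^x\bigl\{ e^{-\alpha_\sigma}\phi(X_\sigma)\bigr\} - \phi(x) = \ee^x\Big\{\int_0^{\sigma} e^{-\alpha_s}\bigl(\laa\phi - r\phi\bigr)(X_s)\,ds\Big\}.
\]

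For the \emph{supersolution} inequality, I fix $\phi \in D_\laa$ with $\phi \le w$ and $\phi(x) = w(x)$. Because $\tau \equiv 0$ is admissible, $w(x) \ge g(x)$, so only the differential part of the minimum requires work. The supermartingale inequality of Lemma \ref{lem:gen_disc_snell} at the deterministic time $h$, together with $\phi \le w$, gives $\phi(x) \ge \ee^x\{\int_0^h e^{-\alpha_s}f(X_s)\,ds + e^{-\alpha_h}\phi(X_h)\}$. Rewriting the last expectation via Dynkin yields $\ee^x\{\int_0^h e^{-\alpha_s}\bigl(f + \laa\phi - r\phi\bigr)(X_s)\,ds\} \le 0$. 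Dividing by $h$ and sending $h \to 0$, using continuity and boundedness of $f$, $\laa\phi$ and $r\phi$ together with dominated convergence, produces $-\laa\phi(x) + r(x)\phi(x) - f(x) \ge 0$.

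For the \emph{subsolution} inequality, fix $\psi \in D_\laa$ with $\psi \ge w$ and $\psi(x) = w(x)$. If $w(x) = g(x)$ the conclusion is immediate. Otherwise $w(x) > g(x)$ and continuity of $w$ and $g$ provides an open neighbourhood $U$ of $x$ on which $w > g$; setting $\tau_U = \inf\{t\ge 0:\ X_t \notin U\}$, the strict inequality forces $\tau^* \ge \tau_U$. The key identity I need is the dynamic programming \emph{equality} on the continuation region: for $\rho_h := \tau_U \wedge h$,
\[
w(x) = \ee^x\Big\{\int_0^{\rho_h} e^{-\alpha_s}f(X_s)\,ds + e^{-\alpha_{\rho_h}}w(X_{\rho_h})\Big\}.
\]
This I would obtain from the optimality of $\tau^*$: since $\rho_h \le \tau^*$, the strong Markov property splits the expression defining $w(x)$ into the pre-$\rho_h$ contribution plus the continuation value $\ee^{X_{\rho_h}}\{\cdots\}$, which equals $w(X_{\rho_h})$ because the shifted $\tau^* - \rho_h$ is the optimal stopping time from $X_{\rho_h}$; the reverse ($\le$) inequality is Lemma \ref{lem:gen_disc_snell}. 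Using $\psi \ge w$ with equality at $x$, Dynkin's formula for $\psi$ gives $0 \le \ee^x\{\int_0^{\rho_h} e^{-\alpha_s}\bigl(f + \laa\psi - r\psi\bigr)(X_s)\,ds\}$. Since $\prob^x\{\tau_U > 0\} = 1$, right-continuity of the paths implies $\rho_h/h \to 1$ in probability as $h \to 0$, and dividing by $h$ then yields $-\laa\psi(x) + r(x)\psi(x) - f(x) \le 0$.

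The main obstacle I expect is the dynamic programming equality on the continuation region. The paper only records the supermartingale inequality for the discounted Snell envelope, so promoting it to an equality on $\{\rho_h \le \tau^*\}$ requires pasting the optimal strategy through the strong Markov property, with $\tau^*$ supplied by Theorem \ref{thm:general_optimality}. Once that identity is in hand, everything reduces to routine Dynkin-and-vanishing-increment calculations that exploit the continuity of $\laa\phi$, $\laa\psi$ (elements of the range of the generator, hence in $\mathcal C_0$) and of $f$, $r$.
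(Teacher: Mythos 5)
Your proposal is correct and follows essentially the same route as the paper: the supermartingale inequality of Lemma \ref{lem:gen_disc_snell} plus Dynkin's formula and a vanishing-increment limit for the supersolution part, and a martingale identity on the continuation region plus Dynkin's formula for the subsolution part. The only difference is in how that martingale identity is justified: where you obtain the dynamic-programming equality by strong-Markov pasting of the optimal rule at $\tau_U\wedge h$, the paper gets it more directly by observing that the supermartingale $Z_t=\int_0^t e^{-\alpha_s}f(X_s)\,ds+e^{-\alpha_t}w(X_t)$ satisfies $\ee^x\{Z_{\tau^*}\}=Z_0$ by optimality of $\tau^*$, so $Z_{t\wedge\tau^*}$ is a martingale, and then works with $\tau^*\wedge t$ (using $\tau^*>0$ $\prob^x$-a.s.\ when $w(x)>g(x)$) instead of an exit time of a neighbourhood.
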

\begin{proof}
\emph{Supersolution property:} Take $x \in E$ and $\phi \in D_\laa$ as in the definition of supersolution. Obviously, $\phi(x) - g(x) = w(x) - g(x) \ge 0$.  By Lemma \ref{lem:general_super_mart} for any $t > 0$ we have
\[
w(x) \ge \ee^x \Big\{ \int_0^t e^{-\alpha_s} f(X_s) ds + e^{-\alpha_t} w(X_t) \Big\}.
\]
Thus
\[
\ee^x \big\{ e^{-\alpha_t} \phi(X_t) \big\} - \phi(x) + \ee^x \Big\{ \int_0^t e^{-\alpha_s} f(X_s) ds \Big\} \le 0.
\]
By Dynkin's formula
\[
\ee^x \Big\{ \int_0^t e^{-\alpha_s} \Big( \laa \phi(X_s) - r(X_s) \phi(X_s) + f(X_s) \Big) ds \Big\} \le 0.
\]
Dividing by $t$ both sides of the last inequality and letting $t \to 0$ we obtain
$\laa \phi(x) - r(x) \phi(x) + f(x) \le 0$.

\emph{Subsolution property:} Take $x \in E$ and $\psi \in D_\laa$ as in the definition of subsolution. If $w(x) = g(x)$ then \eqref{eqn:VI_subsolution} is trivially satisfied. Otherwise, $\tau^* > 0$ $\prob^x$-a.s. By Lemma \ref{lem:gen_disc_snell}, the process $Z_t := \int_0^t e^{-\alpha_s} f(X_s) ds + e^{-\alpha_t} w(X_t)$ is a right-continuous $\prob^x$-supermartingale. Since $\ee^x \{Z_{\tau^*} \} = Z_0$ by the optimality of $\tau^*$, the process $Z_{t \wedge \tau^*}$ is a  martingale and therefore
\[
\ee^x \Big\{ e^{-\alpha_{\tau^* \wedge t}} \psi (X_{\tau^* \wedge t}) - \psi(x) + \int_0^{\tau^* \wedge t} e^{-\alpha_s} f(X_s) ds \Big\}\ge 0.
\]
Dynkin's formula yields for any $t > 0$
\[
\ee^x \Big\{ \int_0^{\tau^* \wedge t} e^{-\alpha_s} \Big( \laa \psi(X_s) - r(X_s) \psi(X_s) + f(X_s) \Big) ds \Big\} \ge 0 .
\]
Dividing both sides of the last inequality by $t$ and letting $t \to 0$, taking into account that $\tau^* > 0$ $\prob^x$-a.s., we obtain that $\laa \psi(x) - r(x) \psi(x) + f(x) \ge 0$.
\end{proof}

Demonstration of uniqueness of viscosity solutions to the variational inequality \eqref{eqn:VI} is much more involved. We shall only conjecture here that when the process $X_t$ is a weakly Feller jump-diffusion on $\er^d$ and viscosity solutions are defined by a core of $C^2_0$ test function then one can obtain uniqueness of solutions within a family of functions with a polynomial growth. The sufficiency of this notion depends very much on whether the value function has a polynomial growth. This growth is related to the expectation of the optimal stopping time $\tau^*$. Under the setting of \ref{ass:speed}-\ref{ass:integrability}, this expectation is of the order  $O(K(x))$, where $K(x)$ is from assumption \ref{ass:speed}.

\section{Dichotomy of discounting}\label{sec:dichotomy}
In this section we drop the assumption $\mu(f) < 0$ and consider a general problem with a continuous bounded discount rate function $r$. We show that under certain assumptions (to be specified below) there are effectively two distinct regimes of the optimal stopping problem. When $\mu(r) > 0$, the stopping problem exhibits features of the classical optimal stopping problem with the discount rate separated from zero (cf. \cite{Robin1978, Zabczyk1984}). In particular, the value function is continuous (hence, finite) for any continuous bounded function $f$ and $w_T$ approximates $w$ uniformly on compact sets. On the other hand, when $\mu(r) = 0$ it behaves as if the discount rate was equal to $0$. Indeed, the discount rate is non-negative so the assumption that $\mu(r) = 0$ implies that $r \equiv 0$ on the support of the invariant measure $\mu$. It is common in many applications that the invariant measure is supported by the whole space $E$ so $\int_0^t r(s) ds = 0$, $\prob^x$-a.s. for all $t \ge 0$. In an unlikely case when the space contains a transient set ($\mu$ is not supported by the whole space) and the time until reaching the recurrent set is integrable, assumption \ref{ass:small_discount} is satisfied and we can employ Theorem \ref{thm:general_optimality}.

Consider now $\mu(r) > 0$ and assume that the following upper bound for large deviations of the empirical process holds:
\begin{assumption}
\item[(L$_d$)] For any $\delta, \ve>0$ and a compact set $K \subset E$ there is $N>0$ and $p>0$ such that for $n\ge N$ and $x \in K$ we have
\[
\prob^x\Big\{\Big|\frac1{n \delta} \int_0^{n \delta} r(X_s)ds - \mu(r)\Big|> \ve \Big\} \le e^{-p (n \delta)}.
\] \label{ass:L_d}
\end{assumption}
By an identical argument as in Lemma \ref{lem:supremum_large_dev} there is a constant $C > 0$ and $\rho > 0$ such that for $t \ge S := N \delta$
\[
\prob^x\Big\{ \sup_{s\geq t} \Big|\frac1s \int_0^s r(X_u)du-\mu(r)\Big| >\ve\Big\}\le Ce^{-\rho t}, \qquad \forall\ x \in K.
\]

Choose $0 < \ve < \mu(r)$ and let
\[
A_t = \Big\{ \sup_{s\geq t} \Big|\frac1s \int_0^s r(X_u)du-\mu(r)\Big| >\ve\Big\}.
\]
Consider a modified stopping problem
\[
\tl w (x) = \sup_{\tau} \ee^x \Big\{ \int_0^\tau e^{-(\alpha_s \vee \lambda s)} f(X_s) ds + e^{-(\alpha_\tau \vee \lambda \tau)} g(X_\tau) \Big\},
\]
where $\lambda = \mu(r) - \ve$. One can solve this problem in a standard way (cf. \cite{Robin1978}). Denoting by $\tl w_T$ the value function with stopping times bounded by $T$ we have
\[
0 \le \tl w(x) - \tl w_T(x) \le \frac{1}{\lambda} e^{-\lambda T} \|f\| + 2 e^{-\lambda T} \|g\|.
\]
Functions $\tl w_T$ are continuous \cite[Corollary 3.6]{Palczewski2008}. They approximate $\tl w$ uniformly on compact sets, which implies that $\tl w$ is continuous. $\ve$-optimal stopping times take a standard form, while optimal stopping times might not exist as in the case with a constant discount rate.

It is interesting to note that under \ref{ass:L_d} the difference $w - \tl w$ is bounded. Indeed, for $x \in K$
\begin{align*}
w(x) - \tl w(x)
&\le \sup_{\tau, \text{bounded}} \ee^x \Big\{ \int_0^\tau \big(e^{-\alpha_s} - e^{-(\alpha_s \vee \lambda s)}\big) f(X_s) ds + \big(e^{-\alpha_\tau} - e^{-(\alpha_\tau \vee \lambda \tau)} \big) g(X_\tau) \Big\}.
\end{align*}
Recall that $\prob^x \{A_t\} \le C e^{-\rho t}$ for $t \ge S$. Let $n = \lfloor S \rfloor + 1$. Since on $A_t^c$ there is $\alpha_s \ge \lambda s$ for $s \ge t$, we have
\begin{align*}
w(x) - \tl w(x) &\le \sup_{\tau, \text{bounded}} \ee^x \Big\{ \sum_{i=n}^\infty \ind{i \le \tau} \ind{A_i} \|f\|  \Big\} + n \|f\| + \|g\|\\
&\le \|f\| \sum_{i=n}^\infty \prob^x \{ A_i\} + n \|f\| + \|g\| \\
&\le \|f\| \Big(n + \frac{C}{\rho} e^{-\rho n} \Big) + \|g\|.
\end{align*}
Similarly, one can estimate the difference $\tl w(x) - w(x)$. This means that $w(x)$ is finite, so the integral term cannot explode. Moreover, this similarity between the two optimal stopping problem guides the following lemma.
\begin{lemma}
Assume \ref{ass:L_d} and $\mu(r) > 0$. The value function $w$ is continuous and approximated uniformly on compact sets by $w_T$.
\end{lemma}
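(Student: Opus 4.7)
My plan is to follow the blueprint sketched in the paragraph immediately preceding the lemma: assumption (L$_d$) is used to control the tail of the discount factor $e^{-\alpha_s}$ in expectation, and this estimate is turned into a uniform-on-compacts bound for $w-w_T$. Continuity of $w$ then follows immediately since $w_T$ is continuous by Lemma~\ref{lem:disc_semicont}.

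First, fix a compact $K\subset E$ and choose $\ve\in(0,\mu(r))$ with $\lambda=\mu(r)-\ve$. Applying the argument of Lemma~\ref{lem:supremum_large_dev} with $r$ in place of $f$, assumption (L$_d$) delivers constants $S,C,\rho>0$ (depending on $K$ and $\ve$) such that $\prob^x\{A_t\}\le Ce^{-\rho t}$ for all $t\ge S$ and $x\in K$, where $A_t$ is the event defined in the paragraph above. On $A_s^c$ one has $\alpha_s\ge\lambda s$, hence
\[
\ee^x\{e^{-\alpha_s}\}\le e^{-\lambda s}+Ce^{-\rho s},\qquad s\ge S,
\]
uniformly in $x\in K$. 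Integrating gives $\ee^x\{\int_T^\infty e^{-\alpha_s}\,ds\}\le \tfrac{1}{\lambda}e^{-\lambda T}+\tfrac{C}{\rho}e^{-\rho T}$ for $T\ge S$, $x\in K$.

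Second, I would bound $w(x)-w_T(x)$ uniformly on $K$. Fix $\eta>0$ and pick a stopping time $\tau$ (possibly infinite) that is $\eta$-optimal for $w(x)$. For $T'\ge T\ge S$, split the expectation with $\tau\wedge T'$ into its part up to $\tau\wedge T$ (which is at most $w_T(x)$ since $\tau\wedge T\le T$) and a residual supported on $\{\tau>T\}$:
\[
\ee^x\Big\{\ind{\tau>T}\Big[\int_T^{\tau\wedge T'}e^{-\alpha_s}f(X_s)\,ds+e^{-\alpha_{\tau\wedge T'}}g(X_{\tau\wedge T'})-e^{-\alpha_T}g(X_T)\Big]\Big\}.
\]
Using $|\int_T^{\tau\wedge T'}e^{-\alpha_s}f(X_s)\,ds|\le\|f\|\int_T^\infty e^{-\alpha_s}\,ds$ together with the pathwise monotonicity $\alpha_{\tau\wedge T'}\ge\alpha_T$, so that $|e^{-\alpha_{\tau\wedge T'}}g(X_{\tau\wedge T'})|\le\|g\|e^{-\alpha_T}$, the tail estimate from Step~1 dominates the residual by a quantity $\Phi(T)$ depending only on $T$, with $\Phi(T)\to 0$ uniformly in $x\in K$ and independently of $\tau$ and $T'$. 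Taking $\limsup_{T'\to\infty}$ and invoking $\eta$-optimality gives $w(x)-\eta\le w_T(x)+\Phi(T)$, and letting $\eta\to 0$ yields $0\le w(x)-w_T(x)\le\Phi(T)$ uniformly on $K$.

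Finally, continuity of $w$ is immediate: $w_T$ is continuous by Lemma~\ref{lem:disc_semicont}, and a uniform-on-compacts limit of continuous functions is continuous. The main obstacle is handling the interaction between the $\limsup$ in the definition of $w$ and possibly infinite-valued stopping times; this is precisely what the tail bound $\ee^x\{\int_T^\infty e^{-\alpha_s}\,ds\}=O(e^{-\lambda T}+e^{-\rho T})$ resolves for the $f$-contribution, while pathwise monotonicity of $\alpha$ dominates the $g$-contribution by $\|g\|e^{-\alpha_T}$ without any further structure.
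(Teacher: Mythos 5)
Your proof is correct and follows essentially the same route as the paper: both bound $w-w_T$ by the tail of the functional beyond time $T$, use \ref{ass:L_d} together with the Lemma \ref{lem:supremum_large_dev} argument to get $\prob^x\{A_t\}\le Ce^{-\rho t}$ uniformly on the compact set, and split on $A$ versus $A^c$ (where $\alpha_s\ge\lambda s$) to obtain a bound $\Phi(T)\to 0$ uniform on compacts, whence continuity follows from the continuity of $w_T$. The only cosmetic differences are that you integrate $\ee^x\{e^{-\alpha_s}\}$ directly via Tonelli where the paper discretises the time axis into unit intervals, and you work with a possibly unbounded $\eta$-optimal stopping time under the $\limsup$ where the paper restricts to bounded stopping times; neither changes the substance.
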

\begin{proof}
Fix a compact set $K \subset E$ and $\ve < \mu(r)$. Take $T \ge S$, where $S$ is introduced below the large deviations assumption \ref{ass:L_d}. For any $x \in E$
\begin{align*}
w(x) - w_T(x)
&\le
\sup_{\tau, \text{bounded}} \ee^x \Big\{ \int_{\tau \wedge T}^\tau e^{-\alpha_s} f(X_s) ds + \ind{\tau > T} \big(e^{-\alpha_\tau} g(X_\tau) - e^{-\alpha_{\tau \wedge T}} g(X_{\tau \wedge T}) \big)\Big\}\\
&\le
\sup_{\tau, \text{bounded}} \ee^x \Big\{ \sum_{i=0}^\infty \ind{\tau \ge T + i} e^{-\alpha_{T+i}} \|f\|  + \ind{\tau > T} e^{-\alpha_T} 2\|g\|\Big\}\\
&\le
\sup_{\tau, \text{bounded}} \ee^x \Big\{ \sum_{i=0}^\infty \ind{\tau \ge T + i} \big( \ind{A_{T+i}} e^{-\alpha_{T+i}} + \ind{A^c_{T+i}} e^{-\lambda(T+i)} \big)\|f\|\\
&\hspace{70pt}+ \ind{A_T} e^{-\alpha_T} 2\|g\| + \ind{A^c_T} e^{-\lambda T} 2\|g\| \Big\}\\
&\le
\|f\| \sum_{i=0}^\infty \prob^x\{A_{T+i}\} + \|f\| e^{-\lambda T} \frac{1}{1 - e^{-\lambda}} + \prob^x \{ A_T \} 2 \|g\| + e^{-\lambda T} 2\|g\|\\
&\le
\|f\| \Big( \frac{C}{p} e^{-\rho T} + e^{-\lambda T} \frac{1}{1 - e^{-\lambda}} \Big) + 2 \|g\| \Big( C e^{-\rho T} + e^{-\lambda T} \Big).
\end{align*}
Hence, value functions $w_T$ converge to $w$ uniformly (and exponentially fast) on compact sets. Since $w_T$ are continuous, so is $w$.
\end{proof}
It is now standard to show that $\tau_\ve = \inf \{ t \ge 0:\ g(X_t) + \ve \ge w(X_t) \}$ is an $\ve$-optimal stopping time for any $\ve > 0$.

\section{Examples}\label{sec:example}

\textbf{Example 1.} Consider a diffusion on $\er^d$
\begin{equation}\label{eqn:diffusion}
dX_t = b(X_t) dt + \sigma(X_t) dW_t,
\end{equation}
where $(W_t)$ is a standard $d_1$-dimensional Brownian motion with $d_1 \ge d$, $b:\er^d \to \er^d$ and $\sigma:\er^d \to \er^{d \times d_1}$ are bounded Borel measurable functions. The boundedness of $b$ outside of a large enough ball is actually implied by the conditions spelled out below, so it is enough to assume local boundedness at this stage. Let $a(x) = \sigma(x) \sigma^T(x)$ and assume \emph{uniform non-degeneracy}, i.e., $\inf_{\|\xi\| = 1} \inf_x \xi^T a(x) \xi > 0$. The speed of convergence of $X_t$ to its invariant measure will depend on the quantity
\[
\eta(x) = \frac{x^T}{\|x\|} b(x) .
\]
The following list summarises three cases we will describe in detail. Each set of conditions implies assumptions \ref{ass:speed}, \ref{ass:integrability} and \ref{ass:uniform_int}. It is easy to verify that the Ornstein-Uhlenbeck process belongs to the first class.
\begin{itemize}
\item If $\eta(x) \le - r$ for sufficiently large $x$ and some $r > 0$, then the process converges at an exponential speed.
\item If $\eta(x) \le - r / \|x\|^p$ for sufficiently large $x$ and some $r > 0$, $0 < p < 1$, then the process converges at a subexponential speed.
\item If $\eta(x) \le - r / \|x\|$ for sufficiently large $x$ and some $r > d/2 + 1$, then the process converges at a polynomial speed. In particular, it suffices to show that $\limsup_{\|x\| \to \infty} x^T b(x) = - \infty$.
\end{itemize}
Details are collected in two lemmas below (Lemma \ref{lem:ver_subexp} and Lemma \ref{lem:ver_poly}). We shall omit references for the first class as it is contained in the second one, which provides sufficient conditions for this paper. Although the same could be said about the relationship between the second and third class, we present details for both cases as they are less well known.

\begin{lemma}[\cite{Klokov2005}] \label{lem:ver_subexp}
Assume that there is $M > 0$, $r > 0$ and $0 < p < 1$ such that $\eta(x) \le - r / \|x\|^p$ for $\|x\|> M$. Then there are a unique invariant measure $\mu$ and constants $C_0, C_1, A_0, A_1$ such that
\[
\sup_{t \ge 0} \ee^x \{ e^{2A_0 \|X_t\|^\alpha} \} \le C_0 e^{2A_0 \|x\|^\alpha}
\]
and
\[
\|P_t(x, \cdot)  - \mu(\cdot) \|_{TV} \le C_1 e^{A_0 \|x\|^\alpha - A_1 t^\delta},
\]
where $\alpha = 1-p$ and $\delta = (1-p)/(1+p)$.
\end{lemma}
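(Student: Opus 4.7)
The plan is to exhibit $V(x) = e^{2A_0 \|x\|^\alpha}$ (with $\alpha = 1-p$ and $A_0$ suitably small) as a Foster--Lyapunov function of subexponential type for the generator $\mathcal{A}$ of $X_t$, and then invoke the general subexponential ergodic theorem of \cite{Klokov2005} to extract both conclusions simultaneously. Writing $V(x) = F(\|x\|)$ with $F(r) = e^{2A_0 r^\alpha}$, one has
\[
\mathcal{A} V(x) = F'(\|x\|)\,\eta(x) + \tfrac{1}{2}\Bigl[F''(\|x\|)\,\tfrac{x^T a(x) x}{\|x\|^2} + F'(\|x\|)\bigl(\tfrac{\mathrm{tr}\,a(x)}{\|x\|} - \tfrac{x^T a(x) x}{\|x\|^3}\bigr)\Bigr].
\]
The identities $F'(r) = 2A_0\alpha\, r^{\alpha-1} F(r)$ and $F''(r) \le (2A_0\alpha)^2 r^{2(\alpha-1)} F(r)$ (the dropped term being negative since $\alpha<1$), together with $\alpha-1-p = -2p$, the hypothesis $\eta(x) \le -r/\|x\|^p$ for $\|x\| > M$, and boundedness of $a$ by $C_a$, yield
\[
\mathcal{A} V(x) \le \bigl[-2A_0\alpha r + \tfrac{1}{2}C_a(2A_0\alpha)^2\bigr]\|x\|^{-2p} V(x) + C'\|x\|^{-1-p} V(x).
\]

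Choosing $A_0$ small enough that the leading bracket is strictly negative, and taking $\|x\|$ large enough that the subdominant $\|x\|^{-1-p}$ term is absorbed, delivers a subexponential drift inequality
\[
\mathcal{A} V(x) \le -\kappa\,\|x\|^{-2p} V(x) + K\,\mathbf{1}_{\{\|x\| \le M_0\}}.
\]
Using $\|x\|^\alpha = (\log V)/(2A_0)$, this rewrites as $\mathcal{A}V \le -\tilde\kappa\,\phi(V) + K\mathbf{1}_C$ off a compact $C$, with $\phi(v) = v(\log v)^{-\beta}$ and $\beta = 2p/\alpha = 2p/(1-p)$, i.e.\ exactly the subexponential drift condition used by \cite{Klokov2005}. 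Uniform non-degeneracy of $a$ supplies the Gaussian-type minorisation on compacts required by that theorem.

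Klokov's theorem then delivers both assertions: the uniform moment bound $\sup_{t\ge 0}\ee^x V(X_t) \le C_0 V(x)$ follows from the drift inequality through a localised Dynkin argument, exploiting the negativity of $\mathcal{A}V$ outside a compact; the total-variation estimate $\|P_t(x,\cdot)-\mu\|_{TV} \le C_1 e^{A_0\|x\|^\alpha - A_1 t^\delta}$ is read off directly, with the rate determined by inverting $H_\phi(v) = \int_1^v du/\phi(u) = (\log v)^{\beta+1}/(\beta+1)$, giving $\delta = 1/(\beta+1) = (1-p)/(1+p)$. The main obstacle is the Lyapunov bookkeeping---ensuring that the negative drift term $-2A_0\alpha r\,\|x\|^{-2p}V$ strictly dominates the positive second-order contribution $\tfrac12 C_a(2A_0\alpha)^2\|x\|^{-2p}V$, which forces $A_0 < r/(C_a\alpha)$, and then confirming that the $\|x\|^{-1-p}V$ remainder is harmless for $\|x\|$ large.
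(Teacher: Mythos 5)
The paper offers no proof of this lemma at all: it is imported verbatim from \cite{Klokov2005}, and the authors explicitly defer to that reference. Your proposal therefore does something genuinely different --- it reconstructs the argument behind the citation, and the reconstruction is essentially sound. The Lyapunov bookkeeping checks out: with $F(\rho)=e^{2A_0\rho^\alpha}$ the exponents combine correctly ($\alpha-1-p=2\alpha-2=-2p$, so both the drift and the leading second-order contribution scale like $\|x\|^{-2p}V$, while the trace remainder scales like $\|x\|^{-1-p}V$, which is indeed subdominant precisely because $p<1$); the smallness condition $A_0<r/(C_a\alpha)$ makes the bracket negative; and the identification $\phi(v)=v(\log v)^{-\beta}$ with $\beta=2p/(1-p)$ inverts to the rate exponent $\delta=1/(\beta+1)=(1-p)/(1+p)$, matching the statement. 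What the citation route buys is brevity; what your route buys is transparency about where the constants $A_0,\delta$ come from and why uniform non-degeneracy is needed (for the minorisation on compacts).

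Three points are glossed over and would need attention in a complete write-up. First, $V\notin D_{\mathcal A}$ (it is unbounded and $\|x\|^\alpha$ is not $C^2$ at the origin), so the computation of $\mathcal A V$ must be understood via a smoothed modification near $0$ and a localised It\^o/Dynkin argument with exit times from balls --- you mention localisation but only in passing. Second, the uniform-in-$t$ moment bound does not follow from $\mathcal A V\le K\mathbf 1_C$ alone (that only gives linear growth in $t$); one needs the full drift $\mathcal A V\le-\kappa\phi(V)+K\mathbf 1_C$ together with eventual concavity of $\phi$ and a Jensen/ODE-comparison argument to conclude $\sup_t\ee^xV(X_t)\le C_0V(x)$. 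Third, the stated total-variation bound carries the prefactor $e^{A_0\|x\|^\alpha}=V(x)^{1/2}$ rather than $V(x)$; extracting this requires the interpolation step in the subexponential ergodic theorem (trading part of the Lyapunov weight for part of the rate), which your phrase ``is read off directly'' elides. None of these is a conceptual obstacle, but they are the places where the cited theorem is doing real work.
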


\begin{lemma}[\cite{Veretennikov1997}]\label{lem:ver_poly}
Assume that there is $M > 0$ and $r > d/2 + 1$ such that $\eta(x) \le - r / \|x\|$ for $\|x\|> M$. Then there are a unique invariant measure $\mu$ and constants $m, k, C > 0,$ such that
\[
\|P_t(x, \cdot)  - \mu(\cdot) \|_{TV} \le C (1 + \|x\|^m) (1 + t)^{-(1+k)}
\]
and
\[
\sup_{t \ge 0} \ee^x \{ \|X_t\|^{m+\delta} \} \le C_0 (1 + \|x\|^{m+\delta})
\]
for sufficiently small $\delta > 0$.
\end{lemma}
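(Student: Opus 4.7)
The proof follows Veretennikov's scheme for polynomially ergodic diffusions, built around a Foster-Lyapunov drift criterion. The starting point is the choice of a Lyapunov function $V(x) = (1+\|x\|^2)^{\beta/2}$ with an exponent $\beta > 2$ to be calibrated. A direct calculation of the generator of \eqref{eqn:diffusion} gives
\[
\laa V(x) = \beta(1+\|x\|^2)^{\beta/2-1}\Big(b(x)\cdot x + \tfrac12 \operatorname{tr} a(x)\Big) + \beta(\beta-2)(1+\|x\|^2)^{\beta/2-2}\, x^{T} a(x)\, x.
\]
The assumption $\eta(x) \le -r/\|x\|$ on $\|x\|>M$ reads $b(x)\cdot x \le -r$ there, and the boundedness of $a$ controls the remaining terms. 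For large $\|x\|$ one obtains
\[
\laa V(x) \le \beta(1+\|x\|^2)^{\beta/2-1}\Big[-r + \tfrac12\operatorname{tr}a(x) + (\beta-2)\,\tfrac{x^T a(x) x}{\|x\|^2} + o(1)\Big],
\]
and the hypothesis $r > d/2 + 1$ is precisely what allows a choice of $\beta > 2$ (hence a nontrivial polynomial moment exponent $m = \beta - 2$) for which the bracket is strictly negative. Consolidating constants then yields the polynomial drift inequality
\[
\laa V(x) \le -c\, V(x)^{1-2/\beta} + C\,\ind{\|x\|\le M'}
\]
on some enlarged ball.

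From here the standard polynomial ergodicity machinery takes over. Uniform non-degeneracy of $a$ together with local boundedness of $b$ implies, via classical parabolic Harnack estimates, that the transition density $p_t(x,y)$ is strictly positive and continuous for every $t>0$, so every bounded ball is a petite set for the skeleton chain $(X_{n\delta})$. Feeding the petite-set property and the polynomial drift inequality into the drift-petiteness criteria for polynomial ergodicity produces the existence and uniqueness of $\mu$ and the total-variation rate
\[
\|P_t(x,\cdot) - \mu(\cdot)\|_{TV} \le C(1+\|x\|^m)(1+t)^{-(1+k)},
\]
with $m = \beta - 2$ and $k$ read off from the exponent $1-2/\beta$. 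The uniform moment bound is an immediate consequence of the same Lyapunov computation: for $\delta > 0$ small enough, $V_\delta(x) = \|x\|^{m+\delta}$ still satisfies a negative drift outside a compact set, so Dynkin's formula yields $\ee^x V_\delta(X_t) \le V_\delta(x) + C$ uniformly in $t \ge 0$.

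The main technical obstacle is the sharpness of the threshold $r > d/2 + 1$. The polynomial choice of Lyapunov function is essentially forced by the fact that $\eta(x)$ decays only as $\|x\|^{-1}$: stretched-exponential or exponential Lyapunov functions, of the kind underlying Lemma \ref{lem:ver_subexp}, would require a drift assumption stronger than $-r/\|x\|$ and cannot be absorbed by a drift that vanishes at infinity. The calibration of $\beta$ must simultaneously ensure strict negativity of the drift bracket, $m = \beta - 2 > 0$ for a nontrivial moment estimate, and $1+k$ large enough that the rate is integrable in $t$ (as required by Theorem \ref{thm:intro_undis}(i)). It is the balance between these competing requirements and the diffusive contribution $\tfrac12\operatorname{tr}a$ in the bracket that produces the precise threshold $r > d/2 + 1$.
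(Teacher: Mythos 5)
The paper gives no proof of this lemma --- it is imported verbatim from the cited reference \cite{Veretennikov1997} --- and your sketch correctly reconstructs the standard argument behind it: the polynomial Lyapunov function $V=(1+\|x\|^2)^{\beta/2}$, the drift inequality $\laa V\le -cV^{1-2/\beta}$ outside a ball (whose second-order term should carry a factor $\tfrac12\beta(\beta-2)$ rather than $\beta(\beta-2)$, an inconsequential slip), petiteness of balls from uniform non-degeneracy, and the subgeometric drift--petite machinery, with the threshold $r>d/2+1$ arising, as you note at the end, from the need to push $\beta$ high enough that the rate exponent $1+k$ exceeds $1$ while keeping the bracket negative. This is essentially the same route as the cited source, so I have nothing to add beyond noting that the uniform-in-$t$ moment bound requires the ODE comparison $u'\le -cu^{1-2/(m+\delta)}+C'$ for $u(t)=\ee^x\{V_\delta(X_t)\}$ rather than a bare application of Dynkin's formula, which you implicitly use.
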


\textbf{Example 2.} The reader is referred to \cite{Masuda2007} for details. Consider a jump-diffusion
\[
dX_t = b(X_t) dt + \sigma(X_t) dW_t + \int_{|z| \le 1} \zeta (X_{t-}, z) \tl \mu (dt, dz) + \int_{|z| > 1} \zeta (X_{t-}, z) \mu (dt, dz),
\]
where $b: \er^d \to \er^d$, $\sigma: \er^d \to \er^{d \times d_1}$ and $\zeta: \er^d \times \er^{d_2} \to \er^d$, $(W_t)$ is a $d_1$-dimensional standard Brownian motion and $\mu$ is a Poisson measure on $[0, \infty) \times (\er^{d_2} \setminus \{0\})$ with the intensity $\nu(dz) dt$. The Levy measure $\nu$ satisfies $\int (|z| \wedge 1) \nu(dz) < \infty$. $\tl \mu$ is the compensated measure $\mu$, i.e., $\tl \mu(dt, dz) = \mu(dt, dz) - \nu(dz) dt$. Let Assumptions 1, 2 and 3$^*$ from \cite{Masuda2007} be satisfied. The first one enforces linear growth for $b, \sigma, \zeta$, the second one is concerned with absolute continuity of the the transition probability with respect to the Lebesgue measure. The last assumption is a Foster-Lyapunov criterion. Theorem 2.2 in the aforementioned paper shows the existence of a unique invariant measure. Coupled with Proposition 3.8 therein it proves our assumptions \ref{ass:speed}-\ref{ass:integrability}. As a special case consider an Ornstein-Uhlenbeck process
\[
dX_t = -Q X_t dt + dZ_t,
\]
where $(Z_t)$ is a (multi-dimensional) Levy process with the generating triplet $(b, A, \nu)$ and all eigenvalues of $Q$ have positive real parts. If $\int_{|z|>1} |z|^q \nu(dz) < \infty$ for some $q > 0$ then the process $(X_t)$ satisfies  \ref{ass:speed}-\ref{ass:integrability} \cite[Theorem 2.6]{Masuda2007}; in fact, the convergence is exponentially fast. It can also be shown that it is strongly Feller if either (a) $A$ is of full rank or (b) $\nu(\er^d) = \infty$ and $\nu$ is absolutely continuous with respect to the Lebesgue measure \cite[Theorem 3.1]{Masuda2004}.



\bibliographystyle{amsplain}
\bibliography{references-undis}
\end{document}